\renewcommand\section{\@startsection {section}{1}{\z@}%
                                   {-3.5ex \@plus -1ex \@minus -.2ex}
                                   {2.3ex \@plus.2ex}%
                                   {\normalfont\large\bfseries}}
\renewcommand\subsection{\@startsection{subsection}{2}{\z@}%
                                     {-3.25ex\@plus -1ex \@minus -.2ex}%
                                     {1.5ex \@plus .2ex}%
                                     {\normalfont\bfseries}}
\newlength{\apb@width}
\newcommand{\autoparbox}[2][c]{\settowidth{\apb@width}{#2}\parbox[#1]{\apb@width}{#2}}
\newcommand{\bea}{\begin{eqnarray}}
\newcommand{\eea}{\end{eqnarray}}
\newcommand{\bee}{\begin{eqnarray*}}
\newcommand{\eee}{\end{eqnarray*}}
\newcommand{\al}{\begin{align*}}
\newcommand{\eal}{\end{align*}}
\newcommand{\be}{\begin{equation}}
\newcommand{\ee}{\end{equation}}
\newcommand{\eq}[1]{(\ref{#1})}
\newcommand{\bem}{\begin{pmatrix}}
\newcommand{\eem}{\end{pmatrix}}
\def\a{\alpha}
\def\b{\beta}
\def\c{\gamma}
\def\e{\epsilon}   
\def\f{\phi}               
\def\g{\gamma}
\def\h{\eta}
\def\inf{\infty}
\def\k{\kappa}             
\def\l{\lambda}
\def\m{\mu}
\def\n{\nu}
\def\o{\omega}  
\def\p{\pi}   
\def\r{\rho}                                     
\def\s{\sigma}                                   
\def\t{\tau}
\def\th{\theta}
\def\til{\tilde}
\def\D{\Delta}
\def\G{\Gamma}
\def\L{\Lambda}
\def\Tr{{\rm Tr}}
\def\jac{\operatorname{jac}}
\def\reg{\operatorname{reg}}
\def\com{\operatorname{com}}
\def\ex{\operatorname{e}}
\def\tpi{2 \pi i}
\def\SL{\operatorname{SL}}
\def\SL{\operatorname{SL}}
\def\Id{\rm Id}
\def\tr{\operatorname{tr}}
\def\fno{f}
\def\gen{\operatorname{genus}}
\def\QQ{\mathbb{Q}}
\def\PP{\mathbb{P}}
\def \H {{\mathbb H}}
\def \Z {{\mathbb Z}}
\def \C {{\mathbb C}}
\def \R {{\mathbb R}}
\newtheorem{thm}{Theorem}[section]
\newtheorem{cor}[thm]{Corollary}
\newtheorem{lem}[thm]{Lemma}
\newtheorem{prop}[thm]{Proposition}
\theoremstyle{remark}
\newtheorem*{rmk}{Remark}
\begin{document}
\begin{center}


\vspace{1cm} 
{\fontsize{18}{0}\selectfont Ê\bf Ê On Rademacher Sums, the Largest Mathieu Group, \\\vspace{.4cm}and the Holographic Modularity of Moonshine}
\vspace{1.5cm}

Miranda C. N. Cheng$~^{\flat, \natural }$ and John F. R. Duncan$~ ^\sharp$

\vspace{0.5cm}

{\it 
$^\flat$Department of Mathematics, Harvard University,
\\
Cambridge, MA 02138, U.S.A. \\
$^\natural$Department of Physics, Harvard University,
\\
Cambridge, MA 02138, U.S.A. \\
$^\sharp$ Department of Mathematics, Case Western Reserve University,\\
Cleveland, OH 44106, U.S.A.
}

\vspace{1.3cm}

\end{center}

\begin{abstract}

Recently a conjecture has been proposed which attaches (mock) modular forms to the largest Mathieu group. 
This may be compared to monstrous moonshine, in which modular functions are attached to elements of the Monster group. 
One of the most remarkable aspects of monstrous moonshine is the following genus zero property:  the modular functions turn out to be the generators for the function fields of their invariance groups. In particular, these invariance groups define genus zero quotients of the upper half plane. 
It is therefore natural to ask if there is an analogue of this property in the Mathieu case, and at first glance the answer appears to be negative since not all the discrete groups arising there have genus zero. 
On the other hand, in this article we prove that each (mock) modular form appearing in the Mathieu correspondence coincides with the Rademacher sum constructed from its polar part. 
This property, inspired by the AdS/CFT correspondence in physics, was shown previously to be equivalent to the genus zero property of monstrous moonshine.
Hence we conclude that this ``Rademacher summability" property serves as the natural analogue of the genus zero property in the Mathieu case. 
Our result constitutes further evidence that the Rademacher method provides a powerful framework for understanding the modularity of moonshine, and leads to interesting physical questions regarding the gravitational duals of the relevant conformal field theories.
\end{abstract}

\pagebreak
\setcounter{page}{1}
\tableofcontents
\clearpage
\section{Introduction}
\label{Introduction}

In 2010, an unexpected relation between the elliptic genus of a $K3$ surface and the sporadic group $M_{24}$ was observed by Eguchi, Ooguri and Tachikawa \cite{Eguchi2010}.
Subsequently, the connection between the $K3$ elliptic genus twisted by a group element of $M_{24}$ and the characters of $M_{24}$-representations was first explored in \cite{Cheng2010_1,Gaberdiel2010}, and further studied in \cite{Gaberdiel2010a,Eguchi2010a}. From the point of view of the $K3$ superconformal sigma model, these twisted (or twining) elliptic genera are believed to specify the way in which the $K3$ elliptic cohomology furnishes an $M_{24}$-representation of infinite dimension. Notice that such a twisted object depends only on the conjugacy class $[g]$ to which the element $g$ belongs. From the ${\cal N}=4$ superconformal symmetry of the sigma model it follows that they are weak Jacobi forms, and moreover admit natural decompositions leading to a set of (mock) modular forms $H_g$ of weight $1/2$. As a result of these studies the existence of a natural relationship between representations of $M_{24}$ and the mock modular forms $H_g$ has been conjectured. 
While such a relationship is yet to be established, extensive evidence has been gathered in \cite{Cheng2010_1,Gaberdiel2010a,Eguchi2010a}. 
In this context the $H_g$ are often referred to as the McKay--Thompson series and a key prediction of the conjecture is that they exhibit particular modular behaviour under certain discrete subgroups $\G_g$ of  $\SL_2(\R)$.

This may be compared to the moonshine conjecture of Conway and Norton \cite{conway_norton}, which relates genus zero subgroups of $\SL_2(\R)$ to elements of the Monster group. More precisely, to each conjugacy class $[g]$ of the Monster is attached a function $T_g$ (its McKay--Thompson series) whose Fourier coefficients are given by characters of the Monster. The genus zero property of monstrous moonshine then refers to the empirical fact (ultimately proven by Borcherds \cite{borcherds_monstrous}) that $T_g$ defines a function on the upper half plane that generates the function field of its invariance group $\G_g$. In particular, the (compactified) quotient of the upper half plane by $\G_g$ is a Riemann surface of genus zero. This property is also expected to extend to the so-called generalised moonshine \cite{generalized_moonshine} which attaches modular functions to commuting pairs of elements of the Monster. Ogg's observation \cite{ogg} that the normaliser of $\G_0(p)$ in $\SL_2(\R)$ for $p$ a prime is genus zero if and only if $p$ divides the order of the Monster was arguably the first hint of a relation between the Monster and genus zero groups. 

Assuming the validity of the $M_{24}$ conjecture, and in particular, the validity of the descriptions of the $H_g$ given in \cite{Cheng2010_1,Gaberdiel2010a,Eguchi2010a}, a natural question to ask is whether the groups $\G_g \subset SL_2(\R)$ attached to the McKay--Thompson series $H_g$ for $M_{24}$ share this fundamental pattern present in monstrous moonshine: do they have a similar genus zero property? At first glance the answer is negative since not all the groups $\G_g$ arising from $M_{24}$ define genus zero quotients of the upper half plane.
When a pattern breaks, we may seek to replace it with a new pattern which encompasses both the old paradigm and the new ``exception". Moreover, we also want to understand why a pattern exists at all in the first place. These are exactly the goals of this paper, and these considerations lead us to recast the question in a new form: a form suggested by earlier work of I. Frenkel which reconsidered the origin, and interpretation, of the genus zero property of monstrous moonshine.

The article \cite{Duncan2009} makes an important step towards a physical explanation of the genus zero property. There it is shown that a discrete group $\G$ has genus zero if and only if a certain regularisation procedure, inspired by Rademacher's work on the $j$-function \cite{Rad_FuncEqnModInv} and summarised here as ``Reg", renders the Poincar\'e-like series 
\be \label{monster_sum_artistic_impression}
T_\G(\t) = \text{Reg}\Big(\sum_{\g\in\G_\inf\backslash \G}q^{\mathsmaller{-1}}\big\lvert_\g \Big)
\ee
convergent without spoiling invariance under the group $\G$. (We say a group $\G$ has genus zero if it defines a genus zero quotient of the upper half-plane.) 
In the above formula $\G_\inf$ is the subgroup of $\G$ fixing the cusp (representative) $i\inf$, the sum is over a set of representatives $\g$ for the cosets of $\G_{\inf}$ in $\G$, and $q=e^{2\pi i\t}$.
As is turns out, the above expression, which we refer to as a {\em Rademacher sum}, and which can be used to define a function for any discrete subgroup $\G$ of 
${\SL}_2(\R)$ commensurable with $\SL_2(\Z)$ (for example), is anomaly free---that is, invariant under $\G$ even after the regularisation---if and only if the group $\G$ has genus zero. In the case that $\G$ does have genus zero the Rademacher sum \eq{monster_sum_artistic_impression} defines a generator for the field of $\G$-invariant functions, and it defines precisely the generator $T_g$ when $\G=\G_g$ for $g$ in the Monster. So the genus zero property of monstrous moonshine may be reformulated as follows: 
\begin{quote}
The McKay--Thompson series $T_g$  coincides with the Rademacher sum $T_{\G}$ attached to its invariance group $\G=\G_g$. 
\end{quote}
Significantly, Rademacher sums with a higher order pole $q^{-n}$, which are relevant for conformal field theories (CFTs) with higher central charges, also define modular functions as long as the group $\G$ has genus zero. 

A compelling physical interpretation of the Rademacher sum is provided by the so-called {AdS/CFT correspondence} \cite{MaldacenaAdv.Theor.Math.Phys.2:231-2521998} (also referred to as the {gauge/gravity duality} or the {holographic duality} in more general contexts), which asserts, among many other things, that the partition function of a given two dimensional CFT ``with an AdS dual" equals the partition function of another physical theory in three Euclidean dimensions with gravitational interaction and with asymptotically anti de Sitter (AdS) boundary condition and toroidal conformal boundary. 
There is not yet a systematic understanding of the conditions a CFT has to satisfy in order for it to have an AdS dual. Nevertheless, the correspondence, when applicable, provides both deep intuitive insights and powerful computational tools for the study of the theory.
From the fact that the only smooth three-manifold with asymptotically AdS torus boundary condition is a solid torus, it follows that the saddle points of such a partition function are labeled by the different possible ways to ``fill in the torus"; that is, the different choices of primitive cycle on the boundary torus which may become contractible in a solid torus that fills it \cite{MaldacenaJHEP9812:0051998}. 
These different saddle points are therefore labeled by the coset space $\G_\inf\backslash \G$, where $\G=\SL_2(\Z)$ \cite{Dijkgraaf2007}. 
From a bulk, gravitational point of view, the group $\SL_2(\Z)$ has an interpretation as the group of large diffeomorphisms, and $\G_\inf$ is the subgroup that leaves the contractible cycle invariant and therefore can be described by a mere change of coordinates. 
Such considerations underlie the previous use of Rademacher sums in the physics literature \cite{Dijkgraaf2007,Moore2007,BoerJHEP0611:0242006,KrausJHEP0701:0022007,Denef2007,Manschot2007}.

Apart from the partition function, which computes the dimension of the Hilbert space graded by the basic quantum numbers (the energy, for instance) of the theory, more interesting information can be gained in the presence of a non-trivial symmetry group acting on the Hilbert space by studying the twisted partition function (a trace over the Hilbert space with a group element inserted) which computes the group characters of the Hilbert space. 
In the Lagrangian formulation of quantum field theories this twisting corresponds to a modification of the boundary condition. 
Now, in the study of moonshine the McKay--Thompson series attached to the group element $g$ has a physical interpretation as the twisted partition function of a two dimensional CFT with a boundary condition twisted by $g$.
This twisting procedure has a clear geometrical meaning for a two dimensional CFT with an AdS gravity dual: From the point of view of the gravity theory, the change of boundary condition by insertion of a group element $g$ also changes the set of  allowed saddle points, and relatedly, the allowed large diffeomorphisms is now given by a different discrete group $\G_g \subset \SL_2(\R)$. From this argument, assuming that the moonshine CFT (or its cousins with higher central charges) have semi-classical-like AdS descriptions in which the path integrals are controlled by their saddle points, the modular objects should  admit expressions as Rademacher sums over the coset spaces $\G_\inf\backslash \G_g$. This picture motivates the relevance of the {Rademacher summability property}:
\begin{quote}
We say a function $f$ is {\em Rademacher summable} if it coincides with the Rademacher sum attached to its polar part.  
\end{quote}
As we have observed above, Rademacher summability is equivalent to the genus zero property in the monstrous case by the results of \cite{Duncan2009}, where the McKay-Thompson series are modular functions.
Moreover, the Rademacher summing procedure applied to $\G$ produces functions which do not have singularities at cusps of $\G$ other than the infinite one. This is in accordance with our expectation that our theory of gravity should admit a unique low energy description.

The situation is arguably more subtle and also more interesting for $M_{24}$.
As mentioned earlier, in this case the relevant modular objects are weight $1/2$ (mock) modular forms, and one does not expect any clear relation between the genus of $\G_g$ and the Rademacher summability of $H_g$---indeed, not all the $\G_g$ attached to elements of $M_{24}$ have genus zero---while in this work we verify explicitly that the functions $H_g$ are all Rademacher summable:
\be\label{Mathieu_sum_artistic_impression}
H_g(\t) =-2\, \text{Reg} \Big(\sum_{\g\in \G_\inf \backslash \G_g} q^{\mathsmaller{-1/8}} \big\lvert_\g\Big) \;
\ee
for all $g\in M_{24}$.
The precise meaning of the above equation will be elucidated in \S \ref{sec:Radsums}, and is in particular the content of the main theorem ({Theorem \ref{thm:HisR}}) of the present paper. Closely related sums for two of the above (mock) modular forms, corresponding to taking $g$ to be the identity or in a certain class of order 2, have been suggested previously in \cite{Eguchi2009a}. The function $H_g$ for $g$ the identity is one of the mock modular forms constructed over a decade ago by Pribitkin in \cite{Pri_SmlPosWgt_II}. 

Note the remarkable similarity between (\ref{Mathieu_sum_artistic_impression}) and (\ref{monster_sum_artistic_impression}), and recall that Rademacher summability is equivalent to the genus zero property in the monstrous case. In view of this,  the Rademacher summability property clearly serves as a natural analogue of the genus zero property, and as a new paradigm that applies to both the Monster and $M_{24}$. 
Moreover, Rademacher sums provide a new angle from which to view the moonshine phenomenon: is it not miraculous that the Rademacher machinery, applied with such simple inputs, uniformly produces infinite $q$-series that contain so much information about sporadic simple groups? 

The rest of the paper is organised as follows. In \S \ref{M24facts} we review the basic properties of the sporadic group $M_{24}$, focusing on its 24-dimensional permutation representation. In \S \ref{Mock} we review the properties of the (mock) modular forms that have been conjectured to encode an infinite-dimensional, $\Z$-graded $M_{24}$ module. In \S \ref{sec:Radsums}  we explain our construction of the Rademacher sums and state the main theorem (Theorem \ref{thm:HisR}) of the present paper. The rest of the paper is devoted to the proof of Theorem \ref{thm:HisR}.  In \S  \ref{sec:Conv} we discuss the convergence of the Rademacher sums and establish that these sums are indeed convergent given the convergence of certain Selberg--Kloosterman zeta functions, which will be shown in \S \ref{Spec}. Having established the convergence of the sums, in \S \ref{sec:Coeffs}  we give explicit expressions for the Fourier coefficients of our Rademacher sums, and discuss their asymptotic growth. In \S  \ref{sec:Var} we study the transformation properties of the Rademacher sums and establish their (mock) modularity. Given the modularity and information about their behaviour at the cusps, in \S \ref{sec:Coin} we finally establish that the Rademacher sums constructed in \S \ref{sec:Radsums} indeed coincide with the (mock) modular forms which are the proposed McKay--Thompson series for the group $M_{24}$ described in \S \ref{Mock}.

\section{The Largest Mathieu Group} 
\label{M24facts}
\setcounter{equation}{0}

We shall start by recalling some facts about the largest Mathieu group. The group $M_{24}$ may be characterised as the automorphism group of the unique doubly even self-dual binary code of length $24$ with no words of weight $4$, also known as the {\em (extended) binary Golay code}. 
In other words, there is a unique (up to permutation) set, ${\mathcal G}$ say, of length $24$ binary codewords (sequences of $0$'s and $1$'s) such that any other length $24$ codeword has even overlap with all the codewords of ${\mathcal G}$ if and only if this word itself is in ${\mathcal G}$, and the number of $1$'s in each codeword of ${\mathcal G}$ is divisible by and not equal to 4. The group of permutations of the $24$ coordinates that preserves the set ${\mathcal G}$ is the sporadic group $M_{24}$.  See, for instance, \cite{sphere_packing}. 
 \begin{table}[h] \centering  \begin{tabular}{ccccccc}
 \toprule
$[g]$ & cycle shape & $\eta_g(\t)$ & $k_g$ &$n_g$& $N_g$ &$h_g$ \\\midrule
$1A$ & $1^{24}$ & $ \eta^{24}(\t) $ & 12 &1  &1 &1\\
$2A$ & $1^{8}2^8$ & $\eta^{8}(\t)\eta^{8}(2\t)$ & 8&2&2 &1\\
$2B $& $2^{12}$ & $\eta^{12}(2\t)$ & 6 &2& 4&2\\
$3A$&$1^63^6$&$\h^6(\t)\h^6(3\t)$&6&3&3&1\\
$3B$&$3^8$&$\h^8(3\t)$&4&3&9&3\\
$4A$& $2^4 4^4$ & $\h^4(2\t)\h^4(4\t)$ &4&4&8&2\\
$4B$&$1^4 2^4 4^4$&$\h^4(\t)\h^2(2\t)\h^4(4\t)$&5&4&4&1\\
$4C$&$4^6$&$\h^6(4\t)$&3&4&16&4\\
$5A$&$1^45^4$&$\h^4(\t)\h^4(5\t)$&4&5&5&1\\
$6A$&$1^22^23^26^2$&$\h^2(\t)\h^2(2\t)\h^2(3\t)\h^2(6\t)$&4&6&6&1\\
$6B$&$6^4$&$\h^4(6\t)$&2&6&36&6\\
$7AB$&$1^3 7^3$&$\h^3(\t)\h^3(7\t)$&3&7&7&1\\
$8A$&$1^2 2^14^18^2$&$\h^2(\t)\h(2\t)\h(4\t)\h^2(8\t)$&3&8&8&1\\
$10A$&$2^210^2$&$\h^2(2\t)\h^2(10\t)$&2&10&20&2\\
$11A$&$1^2 11^2$&$\h^2(\t)\h^2(11\t)$&2&11&11&1\\
$12A$&$2^14^16^112^1$&$\h(2\t)\h(4\t)\h(6\t)\h(12\t)$&2&12&24&2\\
$12B$&$12^2$&$\h^2(12\t)$&1&12&144&12\\
$14AB$&$1^1 2^1 7^114^1$&$\h(\t)\h(2\t)\h(7\t)\h(14\t)$&2&14&14&1\\
$15AB$&$1^1 3^1 5^115^1$&$\h(\t)\h(3\t)\h(5\t)\h(15\t)$&2&15&15&1\\
$21AB$&$3^1 21^1$&$\h(3\t)\h(21\t)$&1&21&63&3\\
$23AB$&$1^1 23^1$&$\h(\t)\h(23\t)$&1&23&23&1\\
 \bottomrule
  \end{tabular}
   \caption{\label{examples_eta} \footnotesize{The cycle shapes, weights $(k_g)$, levels $(N_g)$ and orders $(n_g)$ of the 26 conjugacy classes of the sporadic group $M_{24}$. The length of the shortest cycle is $h_g=N_g/n_g$. The naming of the conjugacy classes follows  the ATLAS convention (cf. \cite{atlas}). We write $7AB$, for example, to indicate that the entries of the incident row are valid for both the conjugacy classes $7A$ and $7B$.
}}
  \end{table}

As such, $M_{24}$ naturally admits a permutation representation of degree $24$, and this allows us to assign a {\em cycle shape} to each of its elements. For example, to the identity element we associate the cycle shape $1^{24}$; to an element of $M_{24}$ that is a product of $12$ mutually commuting transpositions we associate the cycle shape $2^{12}$, and so on. More generally, any cycle shape arising from an element of $M_{24}$ (or $S_{24}$, for that matter) is of the form
$$
{i_1}^{\ell_1} {i_2}^{\ell_2} \dotsi{i_r}^{\ell_r} ,\quad \sum_{s=1}^{r} \ell_{s} \,i_{s} =24\;,
$$
for some $\ell_s\in\Z^+$ and $1\leq i_1<\cdots<i_r\leq23$ with $r\geq 1$. 
Clearly, the cycle shape of an element of $M_{24}$ depends only on its conjugacy class, denoted by $[g]$, although different conjugacy classes can share the same cycle shape.
For future reference we denote the character underlying this {defining} $24$-dimensional representation of $M_{24}$ by $\chi$. 
The value of the character $\chi(g)$ equals the number of fixed points of $g$ in the action on the set of 24 points. In particular, note that $\chi(g)=\ell_1$ in case $i_1=1$ and $\chi(g)=0$ otherwise.

It turns out that the cycle shapes of $M_{24}$ have many special properties that will be important for the understanding of the modular properties of the associated McKay--Thompson series which we will discuss shortly. 
First, the $M_{24}$ cycle shapes are privileged in that they are all of the so-called {\em balanced type} (cf. \cite{conway_norton}), meaning that for each $g\in M_{24}$ there exists a positive integer $N_g$ such 
that if $\prod i_s^{\ell_s}$ is the cycle shape associated to $g$ then
$$
	\prod_si_s^{\ell_s}=\prod_s\left(\frac{N_g}{i_s}\right)^{\ell_s}\;.
$$
We will refer to the number $N_g$ as the {\em level} of the $g$. 

If $g$ has cycle shape $i_1^{\ell_1}\cdots i_r^{\ell_r}$ then the order of $g$ is the least common multiple of the $i_s$'s. 
A second special property of $[g]\subset M_{24}$ is that the order of $g$ coincides with the length $i_r$ of the longest cycle in the cycle shape. 
Henceforth we will denote $n_g=i_r$. 

Finally, observe that for all $g\in M_{24}$ the level $N_g$ defined above equals the product of the shortest and the longest cycle. 
Hence we have $h_g n_g = N_g$ where $h_g$ denotes the length of the shortest cycle in the cycle shape. 
Moreover, we also have the property $h_g \lvert n_g$ and $h_g\lvert 12$. This is very reminiscent of the monstrous moonshine \cite{conway_norton}. 
We also set $k_g=\sum_{s=1}^r \ell_s$/2 to be half of the total number of cycles and call it the {\em weight} of $g$. 
Of course, $N_g$, $n_g$, $h_g$ and $k_g$ depend only on the conjugacy class $[g]$ containing $g$ and can be found in Table \ref{examples_eta}.

To each element $g\in M_{24}$ we can attach an {\em eta-product}, to be denoted $\eta_g$, which is the function on the upper half-plane given by
\begin{gather}
	\eta_g(\tau)=\prod_s\eta(i_s\tau)^{\ell_s}
\end{gather}
where $\prod_si_s^{\ell_s}$ is the cycle shape attached to $g$, and $\eta(\tau)$ is the Dedekind eta function satisfying $\eta(\tau)=q^{1/24}\prod_{n\in\Z^+}(1-q^n)$ for $q=e(\tau)$, where for later convenience, here and everywhere else in this paper we will use the shorthand notation 
$$e(x)=e^{2\p i x}\;.$$ 
As was observed in \cite{Mason,DummitKisilevskyMcKay}, the eta-product $\eta_g$ associated to an element $g\in M_{24}$ (or rather, to its conjugacy class $[g]$) is a cusp form of  weight $k_g$ for the group $\G_0(N_g)$, with a {\em Dirichlet character} $\varsigma_g$ that is trivial if the weight $k_g$ is even and is otherwise defined, in terms of the Jacobi symbol $(\frac{n}{m})$, by
$$
\varsigma_g(\g)=
\begin{cases}
	\left(\frac{N_g}{d}\right) (-1)^{\frac{d-1}{2}},& 
	\text{$d$ odd,} \\
	\left(\frac{N_g}{d}\right), & 
	\text{$d$ even,} 
\end{cases}
$$
in case $d$ is the lower right entry of $\g \in \G_0(N_g)$. 
Let's recall that 
$$
\G_0(N)=\left\{\g \big\lvert\; \g = \bem a&b \\ c&d \eem \in \SL_2(\Z)\,,\; c= 0 \;\;{\rm mod}\;\; N \right\}\;.
$$

The eta-product $\eta_g$ also defines a cusp form of weight $k_g$ on the larger (or equal) group $\G_0(n_g) \supseteq \G_0(N_g)$ if we allow for a slightly more sophisticated multiplier system. 
We remark here that, according to our conventions, a function $\xi:\G\to \C^*$ is called a {\em multiplier system for $\G$ of weight $w$} in case the identity
\be\label{defn:multsys}
	\xi(\g\sigma)
	\jac(\g\sigma,\t)^{w/2}
	=
	\xi(\g)
	\xi(\sigma)
	\jac(\g,\sigma\t)^{w/2}
	\jac(\sigma,\t)^{w/2}
\ee
holds for all $\g,\sigma\in\G$ and $\t\in\H$. Writing $\g= \big(\begin{smallmatrix}a&b\\c&d\end{smallmatrix}\big)$, we have in the above formula $\g\t=\frac{a\t+b}{c\t+d}$, and the Jacobian $$\jac(\g,\t)=(c\t+d)^{-2}\;.$$
In detail, for $\g\in \G$, we define the {\it slash operator $\lvert_{\xi,w}$ of weight $w$ associated with multiplier $\xi$} as 
\be\label{slash}
(f|_{\xi,w}\g)(\t)
	=
	\xi(\g)f(\g\t)\jac(\g,\t)^{\mathsmaller{w/2}}\;,
\ee
and in particular a holomorphic function $f: \H\to \C$ is a modular form of weight $w$ and multiplier $\xi$ on $\G$ if and only if $(f|_{\xi,w}\g)(\t)=f(\t)$ for all $\g\in \G$. 
In the present case we have 
\be\label{eqn:G0nvaretag}
\Big(\frac{1}{\h_g}\Big\lvert_{\xi_g,k_g} \g\Big)(\t) = \frac{1}{\h_g(\t)}\quad,\quad \text{for all}\quad \g \in \G_0(n_g) \;,
\ee
where $\xi_g(\g) = \r_{n_g|h_g}(\g) \varsigma_g(\g)$ with 
\be\label{rho_multiplier}
\rho_{n|h}(\g)=\ex(-\tfrac{1}{(\g\infty-\g 0)nh})=\ex(-cd/nh)\;.
\ee
Note that $\rho_{n_g|h_g}$ is actually a character on $\G_0(n_g)$ since we have that $xy\equiv 1 \pmod{h_g}$ implies $x\equiv y\pmod{h_g}$ by virtue of the fact that $h_g=N_g/n_g$ is a divisor of $24$ for every $g\in M_{24}$ (cf. \cite[\S3]{conway_norton}). Evidently the kernel of $\rho_{n|h}$ is $\G_0(nh)$, and in particular, $\rho_{n|h}$ is trivial on $\G_{\inf}$.

These (meromorphic) modular forms $1/\h_g(\t)$ have also the interpretation as the partition function of a the conformal field theory of 24 free chiral bosons, twisted by $g\in M_{24}$ which acts on the 24  bosons as an element of $S_{24}$. As such, they are also McKay--Thompson series whose Fourier coefficients are positive-integral linear combinations of $M_{24}$ characters. See Table \ref{eta_prod_decomp} for the first ten $M_{24}$-representations appearing in $1/\h_g(\t)$. 
Following an earlier observation in \cite{Govindarajan2009}, it was shown in \cite{Cheng2010_1} that they are connected to the elliptic genera of $K3$ surfaces and the moonshine for $M_{24}$ which will  be discussed in the next section, via a lifting to Siegel modular forms and generalised Kac-Moody superalgebras. More details will be given in \cite{to_appear}.

Observe that the behaviour of the multipliers naturally divides the conjugacy classes of $M_{24}$ into two types: those for which the phase is trivial on $\G_0(n_g)$ and those for which it is not. The former occurs just when $h_g=1$, and the elements of $g\in M_{24}$ with $h_g>1$ are exactly the classes which act fixed-point-freely in the defining permutation representation on $24$ points. The elements with $h_g=1$ have at least one fixed point and thus can be located in a (maximal) subgroup of $M_{24}$ isomorphic to the second largest Mathieu group, $M_{23}$.

\section{Mock Modular Forms} 
\label{Mock}
\setcounter{equation}{0}

In the recent article \cite{Eguchi2010} a remarkable observation relating $M_{24}$ and the unique (up to scale) weak Jacobi form of weight zero and index one, here denoted $Z(\t,z)$, was made via a decomposition of the latter object into a combination of mock theta series and mock modular forms. As is shown in \cite{Eguchi1989,Eguchi2008,Eguchi2009a}, the function $Z(\t,z)=8\sum_{i=2,3,4} (\frac{\th_i(\t,z)}{\th_i(\t,0)})^2$ admits an expression
\be\label{expand_mu}
Z(\t,z) = \frac{\th_1(\t,z)^2}{\eta(\t)^3}\left(a \,\m(\t,z) +q^{-1/8}\big(b  + \sum_{k=1}^\inf t_k\, q^k \big) \right)
\ee
with $a, b,t_k\in\Z$ for all $k\in \Z^+$ where $\m(\t,z)$ denotes the {\em Appell-Lerch sum}, satisfying
$$
 \m(\t,z) = \frac{-i y^{1/2}}{\th_{1}(\t,z)}\,\sum_{\ell=-\inf}^\inf \frac{(-1)^{\ell} y^n q^{\ell(\ell+1)/2}}{1-y q^\ell}
$$
for $q=e(\t)$ and $y=e(z)$. 
The Jacobi theta functions are given by
\begin{align}\notag
\th_1(\t,z) &= -i q^{1/8} y^{1/2} \prod_{n=1}^\inf (1-q^n) (1-y q^n) (1-y^{-1} q^{n-1})\\ \notag
\th_2(\t,z) &=  q^{1/8} y^{1/2} \prod_{n=1}^\inf (1-q^n) (1+y q^n) (1+y^{-1} q^{n-1})\\ \notag
\th_3(\t,z) &=  \prod_{n=1}^\inf (1-q^n) (1+y \,q^{n-1/2}) (1+y^{-1} q^{n-1/2})\\
\th_4(\t,z) &=  \prod_{n=1}^\inf (1-q^n) (1-y \,q^{n-1/2}) (1-y^{-1} q^{n-1/2})\;.
\end{align}
 By inspection, $a=24=\chi(1A)$, $b=-2$ and the first few $t_k$ are
$$
2\times45,\, 2\times231,\, 2\times770,\, 2\times2277, \,2\times5796\cdots
$$
These positive integers $t_k$ have the interpretation of enumerating the $k$-th massive representations of the ${\cal N}=4$ superconformal algebra in the elliptic genus of a $K3$ surface.

The surprising connection to $M_{24}$, beyond the fact that $a=24$ is the dimension of the defining permutation representation of $M_{24}$, is the following: the integers $45$, $231$, $770$, $2277$ and 5796, which are the $t_k/2$ for $k=1,2,3,4,5$, are the dimensions of irreducible representations of $M_{24}$. It was conjectured that the entire set of values $t_{k}$ for $k\in \Z^+$ encode the graded dimension of a naturally defined graded $M_{24}$ module $K=\bigoplus_k K_k$ with ${\rm dim}\,K_k=t_k$. If the conjecture holds then we can obtain new functions by replacing $t_k$ with ${\rm tr}_{K_k}g$ for $g\in M_{24}$. This idea was first investigated in \cite{Cheng2010_1}, and independently in \cite{Gaberdiel2010}; see also \cite{Gaberdiel2010a} and \cite{Eguchi2010a}. It is an idea that we pursue further in this article.

Define $H(\tau)$ so that $Z(\t,z)\eta(\t)^3=\th_1(\t,z)^2(a\mu(\t,z)+H(\t))$. Then
\be\label{H_qexp}
H(\t) =q^{-\frac{1}{8}}\left(-2  + \sum_{k=1}^\inf t_kq^k\right)
\ee
so that $H(\tau)$ is (essentially) a power series incorporating the $t_k$, and hence the graded dimension of the conjectural $M_{24}$ module $K$. 
This function $H(\tau)$ enjoys a special relationship with the group $\SL_2(\Z)$; namely, it is a {\em weakly holomorphic mock modular form of weight $1/2$} on $\SL_2(\Z)$ with {\em shadow} $24 \,\eta(\tau)^3$ (cf. \cite{Atish_Sameer_Don}), which means that $H(\t)$ is a holomorphic function on the upper half-plane $\H$ with at most exponential growth as $\t\to\alpha$ for any $\alpha\in \mathbb{Q}$, and if we define the {\em completion} of $H(\tau)$, to be denoted $\hat{H}(\t)$, by setting 
$$
\hat{H}(\t)=H(\t)+24\, (4{i})^{-1/2} \int_{-\bar \t}^{\infty}(z+\t)^{-1/2}\overline{\eta(-\bar z)^3}{\rm d}z,
$$ 
then $\hat{H}(\t)$ transforms as a modular form of weight $1/2$ on $\SL_2(\Z)$ with multiplier system conjugate to that of $\eta(\tau)^3$. In other words, we have
$$
\big(\hat{H}(\t)\lvert_{\e^{\mathsmaller{-3}},\mathsmaller{1/2}}\mathlarger{\g}\big)(\t)= \epsilon(\gamma)^{\mathsmaller{-3}}\hat{H}(\g\t)
\jac(\g,\t)^{\mathsmaller{1/4}}=\hat{H}(\t)
$$
for $\gamma\in \SL_2(\Z)$, where $\epsilon: \SL_2(\Z)\to \C^*$ is the multiplier system for $\eta(\t)$ satisfying 
$$\big(\h\lvert_{\e,\mathsmaller{1/2}}\mathlarger{\g}\big)(\t)=\eta(\t)\;.$$
(See \S\ref{Dedeta} for an explicit description of $\e$.) 

More generally, a holomorphic function $h(\t)$ on $\H$ is called a {\em (weakly holomorphic) mock modular form of weight $w$} for a discrete group $\G$ (e.g. a congruence subgroup of $\SL_2(\R)$) if it has at most exponential growth as $\t\to\alpha$ for any $\alpha\in \mathbb{Q}$, and if there exists a holomorphic modular form $g(\t)$ of weight $2-w$ on $\G$ such that $\hat{h}(\t)$, given by
\be\label{def_mock}
\hat{h}(\t)=h(\t)+\left(4{i}\right)^{w-1}\int_{-\bar \t}^{\infty}(z+\t)^{-w}\overline{g(-\bar z)}{\rm d}z,
\ee
is a (non-holomorphic) modular form of weight $w$ for $\G$ for some multiplier system $\psi$ say. In this case the function $g$ is called the {\em shadow} of the mock modular form $h$ and we call $\psi$ the multiplier system of $h$.

\begin{table}[h!] \centering  \begin{tabular}{ccc}
 \toprule
$[g]$ & $\chi(g)$ & $\tilde{T}_g(\tau)$\\\midrule
$1A$ & $24$ & $0 $\\
$2A$ & ${8}$ & $16\Lambda_2$\\
$2B $&0& $-24\Lambda_2+8\Lambda_4=2\eta(\tau)^8/\eta(2\tau)^4$\\
$3A$&6& $6\Lambda_3$\\
$3B$&0& $2\eta(\tau)^6/\eta(3\tau)^2$\\
$4A$&0& $4\Lambda_2-6\Lambda_4+2\Lambda_8=2\eta(2\tau)^8/\eta(4\tau)^4$\\
$4B$&4& $4(-\Lambda_2+\Lambda_4)$\\
$4C$&0&$ 2\eta(\tau)^4\eta(2\tau)^2/\eta(4\tau)^2$\\
$5A$ &4&$ 2\Lambda_5$\\
$6A$&2&$ 2(-\Lambda_2-\Lambda_3+\Lambda_6)$\\
$6B$&0&$ 2\eta(\tau)^2\eta(2\tau)^2\eta(3\tau)^2/\eta(6\tau)^2$\\
$7AB$&3&$ \Lambda_7$\\
$8A$&2&$ -\Lambda_4+\Lambda_8$\\
$10A$& 0&$ 2\eta(\tau)^3\eta(2\tau)\eta(5\tau)/\eta(10\tau)$\\
$11A$& 2&$ 2(\Lambda_{11}-11\eta(\tau)^2\eta(11\tau)^2)/5$\\
$12A$& 0&$ 2\eta(\tau)^3\eta(4\tau)^2\eta(6\tau)^3/\eta(2\tau)\eta(3\tau)\eta(12\tau)^2$\\
$12B$& 0&$ 2\eta(\tau)^4\eta(4\tau)\eta(6\tau)/\eta(2\tau)\eta(12\tau)$\\
$14AB$& 1&$ (-\Lambda_2-\Lambda_7+\Lambda_{14}-14\eta(\tau)\eta(2\tau)\eta(7\tau)\eta(14\tau))/3$\\
$15AB$& 1&$ (-\Lambda_3-\Lambda_5+\Lambda_{15}-15\eta(\tau)\eta(3\tau)\eta(5\tau)\eta(15\tau))/4$\\
$21AB$& 0&$ (7\eta(\tau)^3\eta(7\tau)^3/\eta(3\tau)\eta(21\tau)-\eta(\tau)^6/\eta(3\tau)^2)/3$\\
$23AB$& 1&$ (\Lambda_{23}-23\f_{23,1}+23\f_{23,2})/11$\\
 \bottomrule
  \end{tabular}
   \caption{\label{h_g} \footnotesize{In this table we collect the data that via equation \eq{h_g_explicit} define the weight $1/2$ (mock) modular forms $H_g(\t)$. For $N \in \Z_+$, we denote by $\L_N$ the weight 2 modular form on $\G_0(N)$ given by $\L_N = N q\frac{d}{dq} (\log \frac{\h(N\t)}{\h(\t)})$ (cf. \eq{eqn:psin}). For $N=23$, there are two new forms and we choose the basis $\f_{23,1} = \h^2_{23\!A\!B}$ and $\f_{23,2}$ given in \eq{phi232}.
}}
  \end{table}

If the conjectural $M_{24}$-module $K$ exists, apart from $H(\t)$ there must be a family of functions $H_g(\t)$, for each conjugacy class $[g] \subset M_{24}$, obtainable by replacing each $t_k$ with  
the trace of $g$ on $K_k$. These are the McKay--Thompson series given by
\be\label{Hg_qexp}
H_g(\t) =q^{-\frac{1}{8}}\left(-2  + \sum_{k=1}^\inf {\rm tr}_{K_k}(g)q^k\right)\;.
\ee
Explicit expressions for these function $H_g(\t)$ have been proposed in \cite{Cheng2010_1,Gaberdiel2010}, mostly for conjugacy classes with $h_g=1$, and completed for all $[g]\subset M_{24}$ in \cite{Gaberdiel2010a,Eguchi2010a}. These proposals state that they are given by $H(\t)$, the character $\chi(g)$ of $g$ in the 24-dimensional representation, and certain weight two modular forms $\til T_g(\t)$ for the group $\G_0(N_g)$, by 
\be\label{h_g_explicit}
H_g(\t) = \frac{\chi(g)}{24} H(\t) - \frac{\til T_g(\t)}{\h(\t)^3}\;.
\ee
These data are collected in Table \ref{h_g}. The expression \eq{h_g_explicit} makes manifest that the $q$-series $H_g(\t)$ is a mock modular form with shadow ${\chi(g)} \h^3(\t)$, and is in particular a usual modular form when $h_g > 1$, or equivalently, $\chi(g)=0$. Moreover, it is easy to check that $H_g(\t)$  transforms nicely under the group $\G_0(n_g)$, with the multiplier system  $\psi(\g)=\e(\g)^{\mathsmaller{-3}}\r_{n_g|h_g}(\g) $. Notice that the extra multiplier $\r$ that appears when $h_g\neq 1$ is the same as that of the inverse eta-products $1/\h_g(\t)$ which are also related to $M_{24}$, a fact that is in accordance with the $1/2$- and $1/4$-BPS spectrum of the ${\cal N}=4$, d=4 theory obtained by $K3\times T^2$ compactification of the type II string theory. These mock modular forms $H_g(\t)$ are the central objects of study in this work.

Although the conjecture \eq{Hg_qexp} stating that the Fourier coefficients of $H_g(\t)$ are all given by characters of $M_{24}$-module  still remains to be proven, it has been checked up to the first 1000 terms. We list the first few in Table \ref{Mock_decompositions}.
Together with other pieces of evidence, this makes it an extremely plausible conjecture. 
In this paper we will hence assume the validity of this conjecture as a part of our motivation, while our main result (Theorem \ref{thm:HisR}) holds independent of it.

\section{Rademacher Sums} 
\label{sec:Radsums}
\setcounter{equation}{0}
In this section we will state and explain the main theorem (Theorem \ref{thm:HisR}) of the paper, while postponing the proof till the later sections.

Recall that $H(\t)$ is a mock modular form of weight $1/2$ on $\SL_2(\Z)$ with a multiplier system given by $\e(\g)^{\mathsmaller{-3}}$ and with leading term $-2q^{\mathsmaller{-1/8}}$ in its $q$-expansion. We may consider the problem of attaching a function with these data via Rademacher sums, and also analogous functions to discrete subgroups of $\SL_2(\R)$ other than the modular group\footnote{For the purpose of establishing relations to $M_{24}$ it suffices to look at subgroups of $\SL_2(\Z)$, while it is possible to generalise the analysis to $\SL_2(\R)$ subgroups commensurable to $\SL_2(\Z)$}. 

Let $\G$ be a finite index subgroup of $\SL_2(\Z)$ containing the group $\G_{\infty}$ of upper triangular matrices in $\SL_2(\Z)$. In particular, we assume that $\G$ contains $-\Id$. Define $R_{\G}(\t)$ by setting
\be\label{defn_R_Gamma}
R_{\G}(\t)=\lim_{K\to \infty}\sum_{\g\in(\G_{\infty}\backslash\G)_{<K}}
	\e(\g)^{\mathsmaller{-3}}
	\ex(-\tfrac{\g\t}{8})
	\reg(\g,\t)
	\jac(\g,\t)^{\mathsmaller{1/4}}\;,
\ee
where $\reg(\g,\t)$ is the {\em Rademacher regularisation factor (of weight $1/2$ and index $-1/8$)}, defined by $\reg(\g,\t)=1$ in case $\g$ is upper triangular (i.e. $\g\cdot\infty=\infty$) and
\be\label{defn:reg}
\reg(\g,\t)=\ex(\tfrac{\g\t-\g\infty}{8})\,\ex(\tfrac{\g\infty-\g\t}{8},\tfrac{1}{2})
\ee
otherwise, where $\ex(x,s)$ is the following generalisation of the exponential function:
\be\label{defn:genexp}
\ex(x,s)=\sum_{m\geq 0}\frac{(\tpi x)^{m+s}}{\G(m+s+1)}\;.
\ee
Note that we recover $\ex(x)=e^{\tpi x}$ by taking $s=0$ in $\ex(x,s)$, and for $n$ a positive integer $\ex(x,n)$ is the difference between $\ex(x)$ and the order $n-1$ Taylor approximation to $\ex(x)$. It is also closely related to the incomplete Gamma function $\G(s,x)=\int_x^{\infty}t^{s-1}e^t{\rm d}t$: applying  integration by parts 
repeatedly, we obtain
\be\label{defn:genexpintegral}
\ex(x,s)= \frac{\ex(x)}{\G(s)} \int^{2\p i X}_0 t^{s-1}e^t{\rm d}t \;.
\ee

Let $T$ be the element of $\G_{\infty}$ such that $\tr(T)=2$ and $T\t=\t+1$.  Note that two elements $\g,\sigma$ of $\G$  are in the same right coset of $\G_{\infty}$ if and only if there is some $n\in \Z$ such that $\sigma = T^n \g$ or $-T^n\g$. In particular, they have the same lower rows up to multiplication by $\pm\Id$ and we can therefore use  the lower rows to parametrise the cosets. 
This motivates the sum over the ``rectangle" in (\ref{defn_R_Gamma}): 
For any given positive $K$, we consider a sum over a set $\{\g\}$ of representatives for the cosets of $\G_{\infty}$ in $\G$ whose elements have lower rows $(c,d)$ satisfying $0\leq c<K$ and $-K^2<d<K^2$. Using a superscript ${}^\times$ to indicate a sum over coset representatives other than the trivial one, we have
\be\label{defn:GinfGKcross}
	(\G_{\inf}\backslash\G)_{<K}^{\times}=
	\left\{\G_{\inf}\g\mid 0<c<K,\;-K^2<d<K^2
	\right\}\;.
\ee
Here and everywhere else we use $(c,d)$, a shorthand for $(c(\g),d(\g))$, to denote the lower row entries when writing $\g$ as a $2\times 2$ matrix. 
Complimented with a term given by the trivial coset (the one with a representative $\g=$Id), this is the range of the sum, denoted by $(\G_{\inf}\backslash\G)_{<K}$, taken in (\ref{defn_R_Gamma}). With a slight abuse of notation, we use $\g \in \G_\inf\backslash \G$ to denote a sum over a representative from each coset.

To ensure that (\ref{defn_R_Gamma}) is well-defined we require to check that the summands are invariant under the replacement of $\g$ with $\pm T^n\g$ for $n\in\Z$. Obviously $\reg(T^n\g,\t)=\reg(\g,\t)$ for any $n$, and $\g$ and $-\g$ act in the same way on $\H$ as well as on the cusp representatives so the Rademacher regularisation factor has the required invariance. Left multiplication by $T$ does not change the lower row of $\g$ so the factor $\jac(\g,\t)^{\mathsmaller{1/4}}$ is invariant under the substitution of $T^n\g$ for $\g$. Using the explicit description (\ref{Dedmult}) for $\e$ we see that $\e(T^n\g)=\ex(-\tfrac{n}{24})\,\e(\g)$, so that the product $\e(\g)^{\mathsmaller{-3}}\ex(-\tfrac{\g\t}{8})$, is also invariant under the substitution of $T^n\g$ for $\g$, and then the identity $\e(-\g)^{\mathsmaller{-3}}\jac(-\g,\t)^{\mathsmaller{1/4}}=\e(\g)^{\mathsmaller{-3}}\jac(\g,\t)^{\mathsmaller{1/4}}$ completes the verification that every summand of (\ref{defn_R_Gamma}) is invariant under the replacement of $\g$ with $\pm T^n\g$.

While the regularisation factor might seem ad hoc at the first sight, here we would like to argue that it is in fact strictly necessary and very natural. 
First, to motivate the appearance of a regularisation note that a series 
\be\label{explainreg}
\sum_{\g\in\G_{\infty}\backslash\G}\psi(\g)\ex(\alpha\g\t)\jac(\g,\t)^{{\mathsmaller{w/2}}},
\ee
for $\alpha$ a real constant and $\psi$ a (compatible) multiplier system of weight $w$ on $\G$, will generally not converge (absolutely and uniformly on compacta) unless $w>2$. (In case it does converge it defines a modular form of weight $w$ on $\G$.) In \cite{Rad_FuncEqnModInv} Rademacher demonstrated a method to regularise this sum for $\psi\equiv 1$, $\alpha=-1$ and $w=0$ in the case that $\G=\SL_2(\Z)$. His prescription may be described as follows: multiplying each summand by the regularisation factor ${\rm r}^0(-1,\g,\t)$ which is $1$ when $\g$ is upper triangular and 
$$
\ex(\mathsmaller{\g\t-\g\infty})\ex(\g\infty-\g\t,1)=
\ex(\g\t-\g\infty)(\ex(\g\infty-\g\t)-1)
$$ 
otherwise, and replacing the sum over the coset space $\G_{\infty}\backslash\G$ with a limit of sums over rectangles $(\G_{\infty}\backslash\G)_{<K}$, we obtain
$$
\ex(-\t)+\lim_{K\to \infty}\sum_{\g\in(\G_{\infty}\backslash\G)_{<K}^{\times}}\ex(-\g\t)-\ex(-\g\infty)\;.
$$ 
He then went on to prove (cf. loc. cit.) that this expression converges and defines an $\SL_2(\Z)$ invariant function on the upper half-plane, which is nothing but the familiar $j$-function (up to an additive constant). More generally, in \cite{Duncan2009}, for example, it is shown that for $\psi\equiv 1$ and $\alpha,w/2\in\Z$, inclusion of the factor 
$$
{\rm r}^{\mathsmaller{w/2}}(\alpha,\g,\t)=\ex(\alpha(\g\infty-\g\t))\ex(\alpha(\g\t-\g\infty),1-w)
$$ 
for summands corresponding to cosets with upper triangular representatives, together with a limit of sums over rectangles, regularises the sum (\ref{explainreg}) for $\G$ a subgroup of $\SL_2(\R)$ that is commensurable with the modular group. Putting $\a=-1/8$ and $w=1/2$, we recover exactly our regularisation factor (\ref{defn:reg}). It is rather surprising that this straightforward generalisation of regularisation scheme does its job in regularising the sum in \eq{defn_R_Gamma}.

To understand the modular properties of $R_{\G}$, it is useful to consider the companion function $S_{\G}$ defined by
\be\label{defn_S_Gamma}
	S_{\G}(\t)=\lim_{K\to \infty}\sum_{\g\in(\G_{\infty}\backslash\G)_{<K}}
	\e(\g)^{3}
	\ex(\tfrac{\g\t}{8})
	\jac(\g,\t)^{3/4}.
\ee
According to the philosophy of the previous paragraph $S_{\G}$, supposing it converges, should have  modular transformations of weight $3/2$ for $\G$ with multiplier system coinciding with that of $\eta(\t)^3$. Also, we should have that $S_{\G}={\cal O}(q^{1/8})$ as $\t\to\inf$. It will develop in \S\ref{sec:Var} that $R_{\G}$ is a mock modular form and that $S_{\G}$ is its {shadow}. 

\begin{rmk}
The functions $\hat{\Sigma}(\t)$ and $\hat{\Sigma}^{\circ}(\t)$ of \cite{Eguchi2009a} are recovered by taking $\G=\SL_2(\Z)$ and $\G=\G_0(2)$, respectively, in $\hat{R}_{\G}(\t)$, up to a factor of $-2$. We will demonstrate presently that $R_{\G}(\t)=\Sigma(\t)/2$ for $\Sigma(\t)$ is as in loc. cit. when $\G=\G_0(1)$.
\end{rmk}

Apart from considering the Rademacher with $\psi(\g) =\e^{\mathsmaller{-3}}(\g)$, to construct the $M_{24}$ McKay--Thompson series we also need to extend the above construction to the cases with a more general multiplier system $\psi$.  In particular, we would sometimes like to consider different Rademacher sums with the same group $\G$ but different multipliers. 

Given a group morphism $\rho:\G\to\C^*$ that is trivial on $\G_{\inf}$ we may consider the following twists of $R_{\G}$ and $S_{\G}$:
\bea\notag
R_{\G,\rho}(\t)
&=&
\lim_{K\to \inf}\sum_{\g\in(\G_{\infty}\backslash\G)_{<K}}
	\psi(\g)
	\ex(-\tfrac{\g\t}{8})
	\reg(\g,\t)
	\jac(\g,\t)^{1/4}
	\\\label{defn:RGrho}
S_{\G,\rho}(\t)
&=&
\lim_{K\to \inf}\sum_{\g\in(\G_{\infty}\backslash\G)_{<K}}
	\bar \psi(\g)
		\ex(\tfrac{\g\t}{8})
	\jac(\g,\t)^{3/4}
\eea
where we have $\psi(\g)=\r(\g)\e(\g)^{\mathsmaller{-3}}$. In this paper we will take $\G=\G_0(n)$ for some $n\in\Z^+$ and $\r=\rho_{n|h}$ given in 
\eq{rho_multiplier} for $h$ simultaneously dividing $n$ and $24$. 
Not surprisingly, later on we are going to take $(n,h)$ to be the $(n_g,h_g)$ coming from the cycle shapes of $M_{24}$ as given by Table \ref{examples_eta}. 

To ease notation, we write $R_{n|h}$ for $R_{\G,\rho}$ when $\G=\G_0(n)$ and $\rho=\rho_{n|h}$, and we apply the directly analogous interpretation to the  $S_{n|h}$. 
\vspace{.1cm}
The main result in this article is the following.
\begin{thm}\label{thm:HisR}
Let $g\in M_{24}$. Then $H_g=-2R_{n|h}$  when $n=n_g$ and $h={h_g}$.
\end{thm}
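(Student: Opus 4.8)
The plan is to realize both sides as (mock) modular forms of weight $1/2$ on $\G_0(n_g)$ with the common multiplier $\psi(\g)=\r_{n_g|h_g}(\g)\,\e(\g)^{\mathsmaller{-3}}$, and to show that they agree in the three pieces of data that pin such a form down: the principal part at every cusp, the shadow, and the transformation law. Granting these agreements, the difference $D:=H_g+2R_{n|h}$ will be a genuine (shadow-free) holomorphic modular form of weight $1/2$ that vanishes at $i\inf$ and is holomorphic at the remaining cusps; a vanishing theorem for such low weight then forces $D\equiv 0$, which is exactly the assertion $H_g=-2R_{n|h}$.

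\textbf{Convergence and Fourier data.} First I would make sense of the regularized sums \eq{defn_R_Gamma}, \eq{defn:RGrho}. Collecting the coset contributions by their lower rows $(c,d)$ and expanding the regularisation factor \eq{defn:reg} through the generalized exponential \eq{defn:genexp} produces, by the usual circle--method bookkeeping, a candidate Fourier expansion whose coefficients are sums of Kloosterman-type sums weighted by Bessel functions. At weight $1/2$ these are far from absolutely convergent, so the existence of the rectangle limit must come from cancellation among the Kloosterman sums. I would encode this cancellation as convergence of the associated Selberg--Kloosterman zeta functions in the relevant half-plane, and supply it by spectral theory on $\G_0(n)\backslash\H$ (bounds on the small, exceptional eigenvalues of the Laplacian). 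The same computation fixes the principal part: the trivial coset contributes exactly $\ex(-\t/8)=q^{\mathsmaller{-1/8}}$, so $R_{n|h}=q^{\mathsmaller{-1/8}}+O(q^{\mathsmaller{7/8}})$, whence the principal part of $-2R_{n|h}$ equals $-2q^{\mathsmaller{-1/8}}$, matching that of $H_g$ read off from \eq{Hg_qexp}. Reading the expansion at the remaining cusps should show that $R_{n|h}$ is holomorphic there, the expected ``no spurious singularities'' feature of Rademacher sums; one checks separately, using \eq{h_g_explicit} together with the cuspidal behaviour of $\h(\t)^3$ and $\tilde T_g$, that $H_g$ likewise has no principal part away from $i\inf$.

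\textbf{Modularity and shadow.} Next I would establish the transformation law. Feeding the multiplier cocycle \eq{defn:multsys} and the regularisation factor into the coset sum should exhibit $S_{n|h}$ as the precise obstruction to modularity of $R_{n|h}$, identifying $R_{n|h}$ as a mock modular form of weight $1/2$ and multiplier $\psi$ whose shadow is proportional to $S_{n|h}$. It then remains to match shadows, and here the classes split. When $h_g=1$ one has $\chi(g)\ne 0$, and I would identify the weight $3/2$ form $S_{n|h}$ (carrying the conjugate multiplier $\bar\psi$) with the appropriate multiple of $\h(\t)^3$, so that the shadow of $-2R_{n|h}$ becomes exactly $\chi(g)\,\h(\t)^3$, the shadow of $H_g$. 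When $h_g>1$ one has $\chi(g)=0$, so $H_g$ is an honest modular form and the task is instead to prove $S_{n|h}\equiv 0$, i.e.\ that $R_{n|h}$ is itself genuinely modular; a comparison of leading coefficients in the now-convergent defining series for $S_{n|h}$ should give this.

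\textbf{Coincidence and the main difficulty.} With matching principal parts, shadows, weights and multipliers in hand, $D=H_g+2R_{n|h}$ is a holomorphic modular form of weight $1/2$ on $\G_0(n_g)$ that is holomorphic at every cusp and whose $q$-expansion at $i\inf$ begins at $q^{\mathsmaller{7/8}}$. Invoking the scarcity of weight $1/2$ holomorphic forms (via the valence formula, or Serre--Stark together with the vanishing at $i\inf$) forces $D\equiv 0$, yielding the theorem. I expect the genuine obstacle to be the convergence step: weight $1/2$ lies well below the range of absolute convergence, so the whole construction hinges on nontrivial cancellation in the Kloosterman sums, that is, on pushing convergence of the Selberg--Kloosterman zeta functions to the edge of their range by spectral estimates. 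Once convergence and the resulting explicit Fourier data are secured, the modularity, the shadow identification, and the final vanishing are structurally routine.
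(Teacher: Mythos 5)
Your overall architecture---convergence via Selberg--Kloosterman zeta functions and spectral bounds on small eigenvalues, mock modularity of $R_{n|h}$ with shadow $S_{n|h}$, identification of $S_{n|h}$ with a multiple of $\eta^3$, then a vanishing argument for the difference---is the same as the paper's. But the two steps you call ``structurally routine'' contain genuine gaps. The first is the shadow normalization. The Rademacher construction only yields $S_{n|1}=\lambda_n\eta^3$ for \emph{some} constant $\lambda_n$, and that constant is the value of an inexplicit convergent series of Kloosterman sums (the coefficient of $q^{1/8}$ in $S_{n|1}$ is $1+c^*_{\G,\rho}(\tfrac{1}{8})$, not $1$). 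You assume the shadow of $-2R_{n|1}$ ``becomes exactly $\chi(g)\,\eta^3$,'' i.e.\ that $\lambda_n=-\chi(g)/2$, but nothing in your argument establishes this; without it, $D=H_g+2R_{n|h}$ is a mock modular form with a priori nonzero shadow $(\chi(g)+2\lambda_n)\eta^3$, and no vanishing theorem for holomorphic forms applies to it. The paper circumvents this by forming $G_{n|1}=\eta^3\big(\chi(g)R_{n|1}-\lambda_n H_g\big)$, whose shadows cancel identically for \emph{any} $\lambda_n$, so that $G_{n|1}$ is an honest weight two form on $\G_0(n)$; its vanishing is proved with explicit bases (the $\L_m$, eta-products, Atkin--Lehner involutions), and the relation $\chi(g)+2\lambda_n=0$ drops out as the constant term of that vanishing identity rather than being an input.

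The second gap is the case analysis. For $h>1$, the assertion $S_{n|h}=0$ does not follow from ``a comparison of leading coefficients'': the leading coefficient is again an inexplicit Kloosterman series. The paper proves it by showing $S_{n|h}/\eta^3$ descends to a pole-free, hence constant, function on $X_0(nh)$---which requires controlling the order of $S_{n|h}$ at every cusp via the condition $\{\!\{\tfrac{m}{h}+\tfrac{v}{8}\}\!\}\geq\tfrac{v}{8}$ and, for the classes $21AB$ and $12B$, Riemann--Roch on curves of genus $5$ and $13$ together with inspection of $S_2(\G_0(63))$ and $S_2(\G_0(144))$---after which the incompatible multipliers of $S_{n|h}$ and $\eta^3$ force the constant to be zero. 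Similarly, your final appeal to ``the valence formula, or Serre--Stark'' does not close the argument: Serre--Stark concerns the theta multiplier, not the $\e^{-3}\r_{n|h}$ multiplier present here, and the valence bound in weight $1/2$ is too weak to conclude from vanishing to order $7/8$ at $i\inf$ alone once the index of $\G_0(n)$ is large. The paper's replacement is the case-by-case work of \S\ref{sec:Coin}: weight two forms and their constant terms at all cusps for $h=1$, and the modular functions $R_{n|h}/H_g$ (with pole control at every cusp, again via Riemann--Roch for $n=21$) for $h>1$. Your skeleton is correct and your identification of convergence as the analytic crux matches the paper, but as written the proposal does not supply the normalization of the shadow or the cusp-by-cusp and genus-by-genus arguments that actually force the coincidence.
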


The rest of the paper is devoted to the proof of the above theorem.

\section{Convergence}
\label{sec:Conv}
\setcounter{equation}{0}

After defining the Rademacher sums formally in the last section, in this section we would like to discuss the convergence of the sums.
An integral step towards Theorem \ref{thm:HisR} is to prove its convergence. It is a rather subtle issue and in this section we will establish it conditional upon a hypothesis whose validity will be established in \S  \ref{Spec}.
To be more precise, the aim in this section is to demonstrate that if the Selberg--Kloosterman zeta function (\ref{defn:SelKlozeta}) attached to $\psi=\rho\e^{\mathsmaller{-3}}$ and $\G=\G_0(n)$ converges at $s=3/4$ for some character $\rho$ on $\G$ then the limits (\ref{defn_R_Gamma}) and (\ref{defn_S_Gamma}) defining the Rademacher sums $R_{\G,\rho}$ and $S_{\G,\rho}$, respectively, define holomorphic functions on the upper half-plane.

Our arguments in this section owe a lot to the work of Niebur in \cite{Nie_ConstAutInts}. We cannot apply his results verbatim since he worked with (mock) modular forms of negative weight and ours have positive weight, but  many of the procedures we employ have a counterpart in loc. cit.

Let $\G=\G_0(n)$ for some positive integer $n$ and $\psi(\g)=\rho(\g)\e(\g)^{\mathsmaller{-3}}$ for some character $\rho$ on $\G$ satisfying $\r(\g) = \r(T\g)=\r(\g T)$. 
According to the definitions of $R_{\G,\rho}$ and $S_{\G,\rho}$, we can separate the contribution from the trivial coset and the rest in the following way 
\begin{align*}
R_{\G,\rho}(\t)&=q^{-1/8}+\lim_{K\to \inf}\sum_{\g\in(\G_{\inf}\backslash\G)^{\times}_{<K}}R_{\g}(\t)\\
S_{\G,\rho}(\t)&=q^{1/8}+\lim_{K\to \inf}\sum_{\g\in(\G_{\inf}\backslash\G)^{\times}_{<K}}S_{\g}(\t)\;.
\end{align*}
 Using the expression of the (generalised) exponentials as  infinite sums and isolating the first term in the sum,  we obtain $R_{\g}(\t)=R_{\g}^0(\t)+R_{\g}^+(\t)$ and $S_{\g}(\t)=S_{\g}^0(\t)+S_{\g}^+(\t)$, where
\begin{align}\label{defn:RS0gamma}
R_\g^0 &= e(\tfrac{1}{8})\,\psi(\g)\ex(-\tfrac{\g\inf}{8}) \,c^{\mathsmaller{-3/2}} \,(\t-\g^{-1}\inf)^{-1}\\ \notag
S_\g^0&=\bar\psi(\g) \ex(\tfrac{\g\inf}{8}) \,c^{\mathsmaller{-3/2}} \,(\t-\g^{-1}\inf)^{-3/2}
\end{align}
 and 
\begin{align}\label{defn:RSpgamma}
R_\g^+ &=\psi(\g)\ex(-\tfrac{\g\inf}{8})   \sum_{m=1}^\inf \frac{c^{-2m-3/2} (\t-\g^{-1}\inf)^{-m-1}({2\p i}/{8})^{m+1/2}}{\G(m+3/2)} \\ \notag
S_\g^+&=\bar\psi(\g) \ex(\tfrac{\g\inf}{8})\sum_{m=1}^\inf \frac{c^{-2m-3/2} (\t-\g^{-1}\inf)^{-m-3/2}(-{2\p i}/{8})^{m}}{m!} \;.
\end{align}

We have also applied the identities
\begin{align*}
\jac(\g,\t)=c^{-2}(\t-\g^{-1}\inf)^{-2}\quad,\quad \g\inf-\g\t=c^{-2}(\t-\g^{-1}\inf)^{-1}
\end{align*}
in rewriting the above expressions.
With these definitions, we have the following:

\begin{lem}\label{lem:sumRpgammaconv}
The sums $\sum_{\mathsmaller{\g\in(\G_{\inf}\backslash \G)^{\times}}}R_{\g}^+(\t)$ and $\sum_{\mathsmaller{\g\in(\G_{\inf}\backslash \G)^{\times}}}S_{\g}^+(\t)$ are absolutely convergent, locally uniformly for $\t\in\H$.
\end{lem}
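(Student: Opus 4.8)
The plan is to discard the unit-modulus phases, rewrite the summands so that the powers of $c$ become transparent, and then perform the three summations --- over $d$, over $m$, and over $c$ --- in that order, only the first being delicate. Since $\psi(\g)=\rho(\g)\e(\g)^{\mathsmaller{-3}}$ is a product of roots of unity, $|\psi(\g)|=1$, and $|\ex(\mp\g\inf/8)|=1$ because $\g\inf=a/c\in\R$; hence these factors may be dropped at once. Using $\t-\g^{-1}\inf=(c\t+d)/c$ I collapse the powers of $c$: in $R_\g^+$ the factor $c^{-2m-3/2}(\t-\g^{-1}\inf)^{-m-1}$ becomes $c^{-m-1/2}(c\t+d)^{-m-1}$, and in $S_\g^+$ the factor $c^{-2m-3/2}(\t-\g^{-1}\inf)^{-m-3/2}$ becomes $c^{-m}(c\t+d)^{-m-3/2}$. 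It therefore suffices to bound $\sum_\g\sum_{m\geq1}c^{-m-1/2}|c\t+d|^{-m-1}(\pi/4)^{m+1/2}/\Gamma(m+3/2)$ and the companion $S$-series, where for fixed $c>0$ the cosets are indexed by integers $d$ coprime to $c$, and $c$ runs through the positive multiples of $n$.

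The crux is the inner sum over $d$. Writing $y=\im\t$, the naive bound $|c\t+d|\geq cy$ is independent of $d$ and would make $\sum_d$ diverge; I instead exploit the decay of $|c\t+d|^{-s}$ in $|d|$. Dropping the coprimality constraint only enlarges the sum, so I estimate $\sum_{d\in\Z}|c\t+d|^{-s}$ for $s\geq 2$ by comparing the values of the even, unimodal function $t\mapsto(t^2+c^2y^2)^{-s/2}$ at the integer-spaced points $t=cx+d$ with its supremum plus its integral: this gives $\sum_{d\in\Z}|c\t+d|^{-s}\leq (cy)^{-s}+(cy)^{-(s-1)}B_s$, with $B_s=\int_\R(w^2+1)^{-s/2}\,dw<\infty$ and $B_s\leq\pi$ on the relevant range. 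With $s=m+1$ this yields $\sum_d|c\t+d|^{-m-1}\leq(cy)^{-m}\big((cy)^{-1}+\pi\big)$, and similarly $\sum_d|c\t+d|^{-m-3/2}\leq(cy)^{-m-1/2}\big((cy)^{-1}+\pi\big)$ for the $S$-series.

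Feeding these bounds back, the $d$-sum has cost a factor $(cy)^{-m}$ (resp.\ $(cy)^{-m-1/2}$), and the residual sum over $m$ is dominated by $\sum_{m\geq1}A^m/\Gamma(m+3/2)$ (resp.\ $\sum_{m\geq1}A^m/m!$) with $A=\pi/(4c^2y)$. These are majorised by an entire function of $A$, hence converge for every $c$ with leading behaviour of order $A\asymp c^{-2}$. Tracking the surviving powers of $c$ shows that the whole contribution of a given $c$ is $O(c^{-5/2})$, uniformly for $\t$ in any compact $K\subset\H$, on which $y$ is bounded below and $A$ bounded above. Since $c$ runs through the positive multiples of $n$, $\sum_c c^{-5/2}<\infty$ closes the estimate and establishes absolute, locally uniform convergence of both series.

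The main obstacle is precisely this inner sum over $d$, and it is here that the hypothesis $m\geq1$ is essential. For the excluded $m=0$ terms --- which make up $R_\g^0$ and $S_\g^0$ --- the exponent of $|c\t+d|$ drops to $1$ and $3/2$ respectively, so the same estimates no longer yield a bound summable over both $d$ and $c$ (for $R^0$ the $d$-sum is already harmonic-like). Convergence of those pieces is the genuinely subtle, conditional matter controlled by the Selberg--Kloosterman zeta function at $s=3/4$ addressed in the following sections.
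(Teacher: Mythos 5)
Your argument is correct and follows essentially the same route as the paper: bound everything in absolute value, exploit the fact that for $m\geq 1$ the exponent on $|c\tau+d|$ is at least $2$ so the sum over $d$ converges, and conclude that each $c$ contributes $O(c^{-5/2})$ uniformly on compacta, which is summable over the positive multiples of $n$. The only cosmetic difference is that you control the $d$-sum by an integral comparison over all of $\Z$ (discarding coprimality), whereas the paper splits $d=d'+\ell c$ into the at most $c$ residues mod $c$ and a convergent sum over $\ell$; both yield the same bound.
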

\begin{proof}
We give details for the first sum. A very similar argument takes care of the second one. 

By (\ref{defn:RSpgamma}) we have that $R_{\g}^+(\t)={\cal O}(c^{\mathsmaller{-7/2}}\,|\t-\g^{-1}\inf|^{-2})$ as $c\to\inf$. 
For $\G=\G_0(n)$,  the representatives of the non-trivial cosets in the coset space $\G_{\inf}\backslash\G$ are indexed by their lower rows which are tuples of two integers $(c,d)$ with $c>0$ and $n|c$, $\gcd(c,d)=1$.
A procedure we will repeatedly use is to split the sum over $d$ into the following double sum
\be\label{eqn:sumgtosumcdn}
\sum_{\g\in(\G_{\inf}\backslash\G)^{\times}}f(c,d)=
\sum_{\substack{0<c\\n|c}}
\sum_{\substack{0 \leq d'<c\\(c,d')=1}}
\sum_{\ell\in\Z}
f(c,d'+\ell c)
\ee
for an arbitrary function $f(x,y)$.
Notice that the above sum over the tuple $(c,d')$ can be viewed as the sum over a complete and irredundant set of representatives of a double coset in $\G_\inf\backslash\G/\G_\inf$.

Taking $f(x,y)=x^{-7/2}|\t+y/x|^{-2}$ we find that
$$\sum_{\g\in(\G_{\inf}\backslash \G)^{\times}}|R_{\g}^+(\t)|
\ll
\sum_{\substack{0<c\\n|c}}c^{-7/2}
\sum_{\substack{0\leq d'<c\\(c,d')=1}}
\sum_{\ell\in\Z}|\t+d'/c+\ell|^{-2}.
$$
Since there are not more than $c$ invertible elements in the ring $\Z/c$, we conclude that the sum $\sum_{\mathsmaller{\g\in(\G_{\inf}\backslash \G)^{\times}}}R_{\g}^+(\t)$ is absolutely convergent, locally uniformly for $\t\in\H$, as required.
\end{proof}

Hence, in order to show the convergence of the sums defining $R_{\G,\rho}(\t)$ and $S_{\G,\rho}(\t)$ it now suffices to establish the convergence of the limits
\be\label{eqn:LimSum1}
\lim_{ K\to \inf}\sum_{\g\in(\G_{\inf}\backslash\G)^{\times}_{<K}}R_{\g}^0(\t),\quad
\lim_{ K\to \inf}\sum_{\g\in(\G_{\inf}\backslash\G)^{\times}_{<K}}S_{\g}^0(\t),
\ee
and apply Lemma \ref{lem:sumRpgammaconv}.

After carefully examining the limit and employing the Lipschitz summation formula  in the form of Lemma \ref{lem:LipSumAnlg}, the first of the above sums can be rewritten as 
\begin{align}\label{eqn:LimSum7}
&\lim_{K\to \inf}\sum_{\mathsmaller{\g\in(\G_{\inf}\backslash\G)^{\times}_{<K}}}R_{\g}^0(\t) = 2\p \ex(-\tfrac{1}{8})\sum_{k=1}^\inf \ex((k-\tfrac{1}{8})\t) 
\lim_{K\to \inf} \sum_{\substack{0<c< K\\n|c}}c^{-3/2} S(0,k,c,\psi) \;,
 \end{align}
where we have used the {\em generalised Kloosterman sum} (cf. \cite{GolSar_Kloo}) for a given group $\G$:
\be\label{defn:genKlosum}
S(m,\ell,c,\xi )=\sum_{\substack{\g\in\G_{\inf}\backslash\G/\G_\inf\\c(\g)=c}}\xi (\g)\ex((m-\alpha)\g\inf)\ex(-(\ell-\alpha)\g^{-1}\inf)
\ee
for $\ell\in\Z$, where for $m,\ell,c\in\Z$ with $0<c$ and $\xi $ a multiplier system on $\G$, the value $0\leq \alpha<1$ is determined by $\xi $ according to the requirement that $\xi (T)=\ex(\alpha)$.
An important object closely related to the generalised Kloosterman sum is the {\em Selberg--Kloosterman zeta function} of $\G$ (cf. loc. cit.), defined as
\be\label{defn:SelKlozeta}
Z_{m,\ell}(s,\psi)=\sum_{c>0}\frac{S(m,\ell,c,\psi)}{c^{2s}}\;.
\ee
The details of the manipulation leading to \eq{eqn:LimSum7} are given in Appendix \ref{details_convergence}.

Hence, as promised, if we assume the convergence of $Z_{0,k}(s,\r \e^{\mathsmaller{-3}})$ at $s=3/4$ we will have shown the convergence of the first sum in \eq{eqn:LimSum1}, and hence $R_{\G,\r}$ upon using  Lemma \eq{lem:sumRpgammaconv}.
The convergence of the Selberg--Kloosterman zeta function will be demonstrated in \S \ref{Spec}. Finally, a similar but less subtle argument also establishes the convergence of the second sum in \eq{eqn:LimSum1}. In particular, in the case of $S_\g^0$ the application of the Lipschitz summation formula is less delicate: We can apply (\ref{eqn:Lipsum}) with $s=3/2, \a=7/8$ and the error term $E_K$ is absent. 
Taken together we have the following: 

\begin{prop}\label{prop:convRS}
The expression (\ref{defn_R_Gamma}) defining $R_{\G,\rho}(\t)$ converges, locally uniformly for $\t\in\H$, thus defining a holomorphic function on $\H$. Also, the expression (\ref{defn_S_Gamma}) defining $S_{\G,\rho}(\t)$ converges, locally uniformly for $\t\in\H$, thus defining a holomorphic function on $\H$.\end{prop}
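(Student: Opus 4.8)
The plan is to reduce everything to the convergence of the Selberg--Kloosterman zeta function $Z_{0,k}(s,\psi)$, with $\psi=\r\e^{\mathsmaller{-3}}$, at the point $s=3/4$; this input is supplied in \S\ref{Spec}. The decompositions $R_\g=R_\g^0+R_\g^+$ and $S_\g=S_\g^0+S_\g^+$ of \eq{defn:RS0gamma}--\eq{defn:RSpgamma} separate the leading ($m=0$) term of each generalised exponential from its tail. Lemma \ref{lem:sumRpgammaconv} already guarantees that the tails $\sum R_\g^+$ and $\sum S_\g^+$ converge absolutely and locally uniformly on $\H$, so the entire problem is to establish convergence of the two limits in \eq{eqn:LimSum1}.

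For the first limit I would group the nontrivial cosets by the value of $c$ as in \eq{eqn:sumgtosumcdn}, carry out the inner sum over the translation index $\ell$ using the Lipschitz summation formula (Lemma \ref{lem:LipSumAnlg}), and reassemble the output as a $q$-series. This step turns $\sum_{\ell}(\t-\g^{-1}\inf)^{-1}$ into an exponential series in $\t$ whose coefficients are the generalised Kloosterman sums $S(0,k,c,\psi)$ of \eq{defn:genKlosum}; interchanging the summations packages the $c$-dependence precisely into $\sum_{0<c<K,\,n\mid c}c^{\mathsmaller{-3/2}}S(0,k,c,\psi)$, whose limit as $K\to\inf$ is $Z_{0,k}(3/4,\psi)$ by \eq{defn:SelKlozeta}. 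Granting the hypothesised convergence of this zeta function, each coefficient of $\ex((k-\tfrac18)\t)$ converges, and since $|\ex((k-\tfrac18)\t)|=\ex(-(k-\tfrac18)\im\t)$ decays geometrically in $k$ for $\t\in\H$, the full series converges locally uniformly, yielding the identity \eq{eqn:LimSum7} and hence holomorphicity of $R_{\G,\r}$.

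The main obstacle is that the exponent $-1$ in $R_\g^0$ places this sum exactly at the boundary of absolute convergence (weight $1/2$): the inner sum $\sum_{\ell}(\t+d'/c+\ell)^{-1}$ is only conditionally convergent, so one cannot rearrange freely and must keep the limit over the rectangles $(\G_\inf\backslash\G)_{<K}$ in force throughout. Consequently the Lipschitz formula must be applied in its conditionally convergent form, and the associated error term $E_K$ of \eq{eqn:Lipsum} has to be shown to vanish as $K\to\inf$. Controlling $E_K$, and thereby justifying the passage to \eq{eqn:LimSum7}, is the delicate heart of the argument; the detailed estimates are deferred to Appendix \ref{details_convergence}.

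The companion sum $\sum S_\g^0$ is handled by the same device but is strictly easier. Here the exponent is $-3/2$, so the inner sum $\sum_{\ell}(\t+d'/c+\ell)^{\mathsmaller{-3/2}}$ converges absolutely and the Lipschitz formula \eq{eqn:Lipsum} applies with $s=3/2>1$ and $\a=7/8$, with no error term $E_K$. The resulting double series again collapses, via the generalised Kloosterman sums, onto a Selberg--Kloosterman zeta value that converges under the same hypothesis, so $S_{\G,\r}$ likewise defines a holomorphic function on $\H$. Combining the two cases with Lemma \ref{lem:sumRpgammaconv} shows that both \eq{defn_R_Gamma} and \eq{defn_S_Gamma} converge locally uniformly on $\H$, completing the proof.
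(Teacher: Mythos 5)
Your proposal follows the paper's own proof essentially verbatim: the same decomposition $R_\g=R_\g^0+R_\g^+$, the same appeal to Lemma \ref{lem:sumRpgammaconv} for the tails, the same reduction of the $0$-parts via the Lipschitz formula (with the error term $E_K$ controlled as in Appendix \ref{details_convergence}) to the Selberg--Kloosterman zeta value $Z_{0,k}(3/4,\psi)$ established in \S\ref{Spec}, and the same easier treatment of $S_\g^0$ with $s=3/2$, $\a=7/8$. The argument is correct and matches the paper's route.
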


In practice it is useful to know that there is some flexibility in the sum over ``rectangles'' in the definition of $R_{\G,\rho}$ and $S_{\G,\rho}$ (cf. (\ref{defn_R_Gamma}) and (\ref{defn_S_Gamma})). For example, given $\sigma\in\G$ we may consider the sums
\be\label{eqn:RSsumsimgsrecs}
	\lim_{K\to\inf}\sum_{\g\in(\G_{\inf}\backslash\G)_{<K}\sigma}R_{\g}(\t),\quad
	\lim_{K\to\inf}\sum_{\g\in(\G_{\inf}\backslash\G)_{<K}\sigma}S_{\g}(\t),
\ee
where $(\G_{\inf}\backslash\G)_{<K}\sigma$ denotes the ``parallelogram" that is the image of $(\G_{\inf}\backslash\G)_{<K}$ under right multiplication by $\sigma$. The following result can be proved by using the technique employed in Lemma 4.3 of \cite{Nie_ConstAutInts} to extend the convergence arguments appearing already in this section. We suppress the details.
\begin{lem}\label{lem:convimgsrecs}
Let $\sigma\in\G$, and let $R_{\g}(\t)$ and $S_{\g}(\t)$ be as in (\ref{defn:RS0gamma}) and (\ref{defn:RSpgamma}), respectively. Then the expressions in (\ref{eqn:RSsumsimgsrecs}) converge uniformly for $\t\in\H$, thus defining holomorphic functions on $\H$. Moreover these functions coincide with $R_{\G,\rho}(\t)$ and $S_{\G,\rho}(\t)$ respectively.
\end{lem}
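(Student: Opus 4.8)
The plan is to reduce everything to the convergence machinery of \S\ref{sec:Conv} and to show that, for a fixed $\sigma\in\G$, replacing the rectangles $(\G_{\inf}\backslash\G)_{<K}$ by their images $(\G_{\inf}\backslash\G)_{<K}\sigma$ alters neither the existence nor the value of the limits in \eqref{eqn:RSsumsimgsrecs}. As before, I would decompose each summand as $R_{\g}=R_{\g}^0+R_{\g}^+$ and $S_{\g}=S_{\g}^0+S_{\g}^+$ following \eqref{defn:RS0gamma} and \eqref{defn:RSpgamma}. The key bookkeeping observation is that right multiplication by $\sigma$ is a bijection of $\G_{\inf}\backslash\G$ onto itself which acts on lower rows by the \emph{fixed} linear map $M_\sigma:(c,d)\mapsto(c,d)\sigma$; thus the ``parallelogram'' $(\G_{\inf}\backslash\G)_{<K}\sigma$ is literally the image $M_\sigma(\mathrm{Rect}_K)$ of the rectangle $\mathrm{Rect}_K=\{0\le c<K,\,-K^2<d<K^2\}$ in the $(c,d)$-plane, and $\sum_{\g\in(\G_{\inf}\backslash\G)_{<K}\sigma}R_{\g}$ is a sum of the \emph{same} summands $R_{\g}$ taken over this new region.

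The $+$ parts are immediate. By Lemma \ref{lem:sumRpgammaconv} the sums $\sum R_{\g}^+$ and $\sum S_{\g}^+$ converge absolutely and locally uniformly, hence are independent of the exhaustion scheme used to evaluate them. Since $\sigma$ is fixed, the parallelograms $M_\sigma(\mathrm{Rect}_K)$ exhaust $\G_{\inf}\backslash\G$ as $K\to\inf$ exactly as the rectangles do---every fixed coset eventually lies inside both---so the $+$-contributions over parallelograms converge to the same limits as over rectangles.

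The conditionally convergent $0$ parts carry all the difficulty, and this is where I expect the main obstacle to lie: no absolute bound is available, and the value genuinely depends on the order of summation. Here I would re-run the computation leading to \eqref{eqn:LimSum7}, now over $M_\sigma(\mathrm{Rect}_K)$: fix $c$, let $d$ range over the slice $I_{c,K}=\{d:(c,d)\in M_\sigma(\mathrm{Rect}_K)\}$, split $d$ into residues $d'\bmod c$ together with the integer parameter $\ell$ as in \eqref{eqn:sumgtosumcdn}, and apply the Lipschitz summation formula (Lemma \ref{lem:LipSumAnlg}) to the inner $\ell$-sum. The resulting main term is the full Lipschitz series, which is insensitive to the precise interval $I_{c,K}$ and depends only on the residues $d'\bmod c$; it therefore reassembles, in the limit, into the same generalised Kloosterman sums $S(0,k,c,\psi)$ of \eqref{defn:genKlosum} and hence into the same Selberg--Kloosterman series $Z_{0,k}(3/4,\psi)$ of \eqref{defn:SelKlozeta} that controls the rectangle limit, whose convergence is assumed here and proved in \S\ref{Spec}. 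What must then be shown to vanish as $K\to\inf$ are the error terms arising from truncating each $\ell$-sum to $I_{c,K}$; this is precisely the step for which the technique of Lemma 4.3 of \cite{Nie_ConstAutInts} is designed, adapting the error estimates of \S\ref{sec:Conv} and Appendix \ref{details_convergence} to the skewed boundary. The genuinely new feature is that $M_\sigma$ can stretch the $c$-coordinate by a factor proportional to the lower-left entry $\sigma_{21}$ (which is a multiple of $n$), so that $c$ ranges over a skewed set reaching up to order $K^2$ and the slices $I_{c,K}$ for large $c$ near the corners of the parallelogram need not span a full period; one must check that these incompletely covered, large-$c$ boundary contributions, weighted by $c^{-3/2}$, are summable and vanishing, again via the convergence of $Z_{0,k}(s,\psi)$ at $s=3/4$. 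The companion argument for $S_{\g}^0$ is milder, since the improved exponent $-3/2$ makes the relevant sum better behaved and the Lipschitz error term is absent (cf. the remark following \eqref{eqn:LimSum1}). Assembling the $0$- and $+$-contributions yields convergence of both expressions in \eqref{eqn:RSsumsimgsrecs}, locally uniformly in $\t$, with limits equal to $R_{\G,\rho}$ and $S_{\G,\rho}$ respectively.
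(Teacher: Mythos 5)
Your proposal follows essentially the same route as the paper, which itself gives no details: the paper simply states that the lemma ``can be proved by using the technique employed in Lemma 4.3 of \cite{Nie_ConstAutInts} to extend the convergence arguments appearing already in this section'' and suppresses the rest. Your decomposition into the absolutely convergent $+$ parts (dispatched by Lemma \ref{lem:sumRpgammaconv}) and the conditionally convergent $0$ parts, together with your identification of the skewed large-$c$ boundary slices as the point where Niebur's error estimates must be invoked, is a faithful and correctly organised expansion of exactly that plan.
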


\section{Coefficients}
\label{sec:Coeffs}
\setcounter{equation}{0}

A common motivation to  study Rademacher sums is to obtain an exact expression for the Fourier coefficients of a given infinite $q$-series. 
In this section we will give such expressions for the functions $R_{\G,\r}$ and $S_{\G,\r}$. They are useful, for instance, in the physical study of the growth \eq{growth} of the twisted indices of supersymmetric states---the (twisted) entropy---as the energy of the states gets large. 

In the last section we have seen  that, by splitting the sum over representatives of cosets $\G_\inf\backslash\G$ into a double sum over those of the double cosets $\G_\inf\backslash\G/\G_\inf$ and over all integers $\ell\in \Z$, and applying the Lipschitz summation formula \eq{eqn:Lipsum1}, we obtain expressions \eq{eqn:LimSum7} for the objects \eq{eqn:LimSum1} as an infinite $q$-series. In this section we would like to show that the same can be done for the full $R_{\G,\r}, S_{\G,\r}$, and write down explicit expressions for the infinite $q$-series that the infinite sums \eq{defn:RGrho} converge to. 

Following similar steps as delineated in the last section, this time with the help of equation \eq{eqn:Lipsum}, it is not difficult to prove that the functions $R_{\G,\r}, S_{\G,\r}$ can be written as $q$-series with the following Fourier coefficients 

\begin{align}
\notag
&c_{\G,\rho}(k-\tfrac{1}{8})=\frac{2\pi\,\ex(-\tfrac{1}{8})}{(8k-1)^{1/4}}
\sum_{\substack{0<c\\n|c}}
\frac{1}{c}I_{1/2}\left(\frac{\pi}{2c}(8k-1)^{1/2}\right)
S(0,k,c,\psi)\\
\label{eqn:cstar_Gamma_SKz}
&c_{\G,\rho}^*(k+\tfrac{1}{8})=\ex(-\tfrac{3}{8})\,
{2\pi}{(8k+1)^{1/4}}
\sum_{\substack{0<c\\n|c}}
\frac{1}{c}J_{1/2}\left(\frac{\pi}{2c}(8k+1)^{1/2}\right)
S(1,k+1,c,\bar \psi)\;,
\end{align}
where $S(m,\ell,c,\psi)$ again denotes the generalised Kloosterman sum \eq{defn:genKlosum} and $\psi = \r \e^{\mathsmaller{-3}}$.

First note that the $q$-series with the above coefficients indeed define holomorphic functions on ${\mathbb H}$. To start with, again assuming the convergence of the Selberg--Kloosterman zeta function $Z_{0,k}(\tfrac{3}{4},\psi)$, we will show that the sums (over $c$) in \eq{eqn:cstar_Gamma_SKz} converge: Observe that since $c^{-1}I_{1/2}(\tfrac{\pi}{2c} \sqrt{8k-1}\hspace{.06cm})$ is bounded by a constant times $c^{\mathsmaller{-3/2}}$ for $c$ sufficiently large (cf. (\ref{BesselIJSmall})), the convergence of the right hand side of (\ref{eqn:cstar_Gamma_SKz}) is guaranteed by the convergence of $Z_{0,k}(\tfrac{3}{4},\psi)$. We establish the convergence of $Z_{0,k}(\tfrac{3}{4},\psi)$ for $k\in\Z$ in \S\ref{Spec} for the case that $\G=\G_0(n)$ and $\rho=\rho_{n|h}$ for some $h$ dividing both $n$ and $24$. From the above argument we see that this result implies the convergence of both the $c_{\G,\rho}(k-\tfrac{1}{8})$ for $k>0$ and the $c^*_{\G,\rho}(k+\tfrac{1}{8})$ for $k\geq 0$.

Once armed with the convergence of the terms $c_{\G,\rho}(k-\tfrac{1}{8})$ and $c_{\G,\rho}^*(k+\tfrac{1}{8})$ we may consider the following series 
$$
\sum_{k\geq 1}c_{\G,\rho}(k-\tfrac{1}{8})q^{k-1/8},\quad
\sum_{k\geq 0}c_{\G,\rho}^*(k+\tfrac{1}{8})q^{k+1/8}.
$$ 
The Bessel function $I_{1/2}(x)$ is asymptotic to $e^x/\sqrt{2\pi x}$ for large $x$ (cf. \S\ref{sec:Bessel}) so $c_{\G,\rho}(k-\tfrac{1}{8})$ is dominated by the first term in the summation over $c$ in (\ref{eqn:cstar_Gamma_SKz}) for $n$ sufficiently large, and similarly for $c_{\G,\rho}^*(k+\tfrac{1}{8})$; in the case that $\G=\G_0(n)$ this is the term with $c=n$. We  then have
\be\label{growth}
c_{\G,\rho}(k-\tfrac{1}{8})={\cal O}\left( \frac{e^{\pi\sqrt{8k-1}/2n} }{\sqrt{8k-1}}\right)
\ee
as $k\to \infty$ and a similar estimate holds for $c_{\G,\rho}^*(k+\tfrac{1}{8})$. In particular, we may conclude that the series $\Phi_{\G,\rho}$ and $\Phi_{\G,\rho}^*$, defined by
\begin{gather}
\label{eqn:F_Gamma}
\Phi_{\G,\rho}(\t)=q^{-1/8}+\sum_{k\geq 1}c_{\G,\rho}(k-\tfrac{1}{8})q^{k-1/8},\\
\label{eqn:Fstar_Gamma}
\Phi_{\G,\rho}^*(\t)=q^{1/8}+\sum_{k\geq 0}c_{\G,\rho}^*(k+\tfrac{1}{8})q^{k+1/8},
\end{gather}
converge absolutely and locally uniformly for $\t\in \H$ upon identifying $q=\ex(\t)$. 

We summarise the discussion of the previous two paragraphs in the following lemma.
\begin{lem}\label{lem:Zconvmodcconv}
The expressions (\ref{eqn:cstar_Gamma_SKz}) defining $c_{\G,\rho}(k-\tfrac{1}{8})$ and $c_{\G,\rho}^*(k+\tfrac{1}{8})$ converge. Further, the generating series (\ref{eqn:F_Gamma}) and (\ref{eqn:Fstar_Gamma}) for the $c_{\G,\rho}(k-\tfrac{1}{8})$ and $c_{\G,\rho}^*(k+\tfrac{1}{8})$ converge absolutely and locally uniformly for $\t\in\H$, thus defining holomorphic functions $\Phi_{\G,\rho}(\t)$ and $\Phi_{\G,\rho}^*(\t)$ on $\H$. These functions coincide with $R_{\G,\rho}(\t)$ and $S_{\G,\rho}(\t)$ respectively.
\end{lem}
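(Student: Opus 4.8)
The plan is to prove Lemma \ref{lem:Zconvmodcconv} by identifying the abstractly-defined Rademacher sums $R_{\G,\rho}$ and $S_{\G,\rho}$ (the limits of sums over rectangles) with the concrete $q$-series $\Phi_{\G,\rho}$ and $\Phi^*_{\G,\rho}$ whose coefficients are given by the Bessel--Kloosterman expressions \eq{eqn:cstar_Gamma_SKz}. The convergence half of the statement has essentially already been assembled in the preceding two paragraphs: granting the convergence of the Selberg--Kloosterman zeta function $Z_{0,k}(\tfrac34,\psi)$ at $s=3/4$ (to be supplied in \S\ref{Spec}), the small-argument bound $c^{-1}I_{1/2}(\tfrac{\pi}{2c}\sqrt{8k-1}) = {\cal O}(c^{-3/2})$ yields convergence of each coefficient, while the large-argument asymptotic $I_{1/2}(x)\sim e^x/\sqrt{2\pi x}$ gives the growth estimate \eq{growth}, which in turn forces the generating series \eq{eqn:F_Gamma} and \eq{eqn:Fstar_Gamma} to converge absolutely and locally uniformly on $\H$. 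So the first two sentences of the lemma follow by collecting these observations; I would state this explicitly and refer back to them.

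The substantive content is the final sentence, the identification $\Phi_{\G,\rho}=R_{\G,\rho}$ and $\Phi^*_{\G,\rho}=S_{\G,\rho}$. Here I would retrace the computation sketched just before the lemma. Starting from the rectangle-truncated sums, I would split off the trivial coset (contributing $q^{-1/8}$, respectively $q^{1/8}$) and write the remaining contribution as $\sum R_\g = \sum(R^0_\g + R^+_\g)$, using the decomposition \eq{defn:RS0gamma}--\eq{defn:RSpgamma}. By Lemma \ref{lem:sumRpgammaconv} the $R^+_\g$ (resp. $S^+_\g$) terms are absolutely convergent, so they may be summed freely and reorganised; for the $R^0_\g$ (resp. $S^0_\g$) terms I would invoke the rectangle limit together with the Lipschitz summation formula (Lemma \ref{lem:LipSumAnlg}, equation \eq{eqn:Lipsum}) exactly as in the passage producing \eq{eqn:LimSum7}, now carried out for every Fourier mode rather than just the polar one. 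The key algebraic step is the reorganisation \eq{eqn:sumgtosumcdn} of the coset sum into a sum over double-coset representatives indexed by $(c,d')$ times a sum over $\ell\in\Z$; applying Lipschitz summation to the inner $\ell$-sum converts the product $\ex(-\g\t/8)\,\reg(\g,\t)\,\jac(\g,\t)^{1/4}$ into a $q$-series whose $q^{k-1/8}$-coefficient is precisely the Bessel factor times the double-coset sum $\sum_{c(\g)=c}\psi(\g)\ex(-\g\infty/8)\ex(\ldots)$, which is by definition \eq{defn:genKlosum} the Kloosterman sum $S(0,k,c,\psi)$. Assembling the $c$-sum then reproduces \eq{eqn:cstar_Gamma_SKz}, and analogously for $S_{\G,\rho}$ with $J_{1/2}$ and $S(1,k+1,c,\bar\psi)$.

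The main obstacle, and the point requiring genuine care, is the interchange of the rectangle limit $\lim_{K\to\infty}$ with the passage to Fourier coefficients. The rectangles $(\G_\inf\backslash\G)^\times_{<K}$ truncate simultaneously in $c$ and in $d$, and the $R^0_\g$ sum is only conditionally convergent, so one cannot naively commute the limit with the Lipschitz reorganisation of the $\ell$-sum; the Lipschitz formula \eq{eqn:Lipsum} carries an error term $E_K$ whose vanishing as $K\to\infty$ must be controlled. This is exactly the delicate estimate deferred to Appendix \ref{details_convergence} for the polar term, and the task here is to verify that the same control holds uniformly across all Fourier modes $k$, so that the resulting double limit (in $K$ and in the number of modes) may be exchanged. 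Once this uniformity is in hand, the locally uniform convergence established in the first part of the lemma justifies reading off Fourier coefficients term by term, and the identifications $\Phi_{\G,\rho}=R_{\G,\rho}$ and $\Phi^*_{\G,\rho}=S_{\G,\rho}$ follow. The $S$-case is genuinely easier, since there the Lipschitz formula applies with $s=3/2$, $\alpha=7/8$ and no error term, so that identification is immediate.
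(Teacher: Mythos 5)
Your proposal follows the paper's own route: the paper proves this lemma precisely by assembling the preceding two paragraphs (coefficient convergence from $Z_{0,k}(\tfrac34,\psi)$ plus the Bessel bounds, then the growth estimate \eq{growth} for the generating series) and by repeating the \S\ref{sec:Conv}/Appendix~\ref{details_convergence} manipulation --- trivial-coset split, the decomposition into $R^0_\g+R^+_\g$, the double-coset reorganisation \eq{eqn:sumgtosumcdn}, and Lipschitz summation now in the form \eq{eqn:Lipsum} --- to identify the Fourier coefficients with \eq{eqn:cstar_Gamma_SKz}. Your extra attention to the interchange of the rectangle limit with the Fourier reorganisation, and the remark that the $S$-case needs no error term, match the paper's treatment; this is essentially the intended proof.
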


\section{Variance}
\label{sec:Var}
\setcounter{equation}{0}

After establishing the convergence of the Rademachers sums $R_{\G,\rho}$ and $S_{\G,\rho}$, in this section we wish to determine how they transform under the action of $\G$. In particular, we will show that $S_{\G,\rho}$ is a weight $3/2$ cusp form and $R_{\G,\rho}$ is a weight $1/2$ mock modular form on $\G$. Moreover, the shadow of $R_{\G,\rho}$ is given by $S_{\G,\rho}$.

As before, we have $\G=\G_0(n)$ for some positive integer $n$, and let $\rho=\rho_{n|h}$ (cf. (\ref{rho_multiplier})) for some $h$ dividing both $n$ and $24$ and set $\psi=\rho\e^{-3}$. The variance of the Rademacher sum $S_{\G,\rho}$ can be easily established, and we have 

\begin{prop}\label{prop:SGammacuspform}
The function $S_{\G,\rho}(\t)$ is a cusp form of weight $3/2$ for $\G$ with multiplier system $\overline{\psi}$.
\begin{proof}
Since $\e^3$ is the multiplier of the cusp form $\h^3(\t)$ and $\r$ is a group character satisfying $\r(\g)\r(\sigma) = \r(\g\sigma)$ for all $\g,\sigma \in \G_0(n)$, we see that $\bar \psi=\r^{-1}\e^3$ is a weight $3/2$ multiplier system, which leads immediately to 
\be\label{variance_S_gamma}
 S_{\g}|_{\overline{\psi},\mathsmaller{3/2}}\sigma=S_{\g\sigma} 
 \ee  
 for $\s\in\G$, where $S_\g$ is as defined in \eq{defn:RS0gamma}-\eq{defn:RSpgamma}. Moreover, when $h=1$ and hence $\r$ is trivial, this also holds more generally for $\s\in \SL_2(\Z)$. 

Applying Lemma \ref{lem:convimgsrecs} to $S_{\G,\rho}=\lim_{\mathsmaller{K\to\inf}}\sum_{{\g\in(\G_{\inf}\backslash\G)_{\mathsmaller{<K}}}}S_{\g}(\t)$ and using \eq{variance_S_gamma}, we immediately have $S_{\G,\rho}|_{\overline{\psi},\mathsmaller{3/2}}\sigma= S_{\G,\rho}$ for  $\s\in\G$. This establishes that $S_{\G,\rho}$ is a modular form of weight $3/2$ for $\G$ with multiplier system $\overline{\psi}$.
We also need to show that $S_{\G,\rho}$ vanishes at all cusps. 
For the cusp at $\inf$, this is clear from  Lemma \ref{lem:Zconvmodcconv}. We will now show that it is also true for the other cusps of $\G$. 

An important property of a given cusp is its width. 
Suppose $\k\in \QQ$ is a representative of a cusp of $\G \subset \SL_2(\Z)$ that is related to the infinite cusp by $\s\inf = \k$, $\s \in \SL_2(\Z)$. We define the {\it width} $u$ of the cusp $\k$ to be the positive integer such that the subgroup of $\G$  stabilising $\k$ is given by
\be\label{width}
\Big\{ \g \in \G\Big\lvert \g \k = \k \Big\} = \s \langle \pm T^u \rangle\s^{-1} = \s \G_\inf^u \s^{-1}\;.
\ee

Alternatively, we can rephrase the above definition using the concept of a `scaling element' which will become important later. 
Consider an element $  \til \s \in \SL_2(\R)$, to be called a {\it scaling element} of $\G$ at cusp $\k$, such that $\k=\til \s\inf$ and the subgroup of $\G$  stabilising $\k$ is given by $\til \s \G_\inf  \til \s^{-1}$ (Cf. \cite[\S2.6]{Duncan2009}).  Such a scaling element necessarily has the form 
\be\label{scaling}
\til \s=  \s   U  T^\b\quad,\quad \rm{with}\quad  \s \in \SL_2(\Z)\;,\;  \b \in \QQ\;,
\ee
where $U= \big(\begin{smallmatrix} \sqrt{u} & 0 \\ 0& 1/\sqrt{u} \end{smallmatrix}\big)$ maps $\t \to u\t$ and $T^\b =  \big(\begin{smallmatrix} 1 & \b \\ 0& 1 \end{smallmatrix}\big)$.

Given a cusp representative $\k = \s \inf $ of width $v$ of $\G_0(n)$, in order to study the behaviour of $S_{n|h}$ at $\k$, it is convenient to consider the function $S_{n|h}^{(\k)} (\t)$ which we will now define. 
For a given scaling element $\til \s$, for $h=1$ we define 
\begin{align}\notag
S_{n|1}^{(\k)} (\t)  = S_{n\lvert 1}\big\lvert_{\e^3,\mathsmaller{3/2}} \s (v(\t+\b)\hspace{.02cm})  =  \e^3(\s) \jac(\s,v(\t+\b))^{\mathsmaller{3/4}}  S_{n\vert h}(\til\s\t)\;.
\end{align}
From \eq{variance_S_gamma} we get 
\[ S_{n|1}^{(\k)} (\t)  = \lim_{\mathsmaller{K\to\inf}}\sum_{\g \in(\G_{\inf}\backslash\G)_{\mathsmaller{<K}} \s} S_\g (v(\t+\b)\hspace{.02cm}) \;.
\]
 Splitting the above sum into a sum over the double coset $(\G_{\inf}\backslash\G)_{\mathsmaller{<K}} \s/\G_\inf^v$ and applying the Lipschitz summation formula as before, we obtain that $S_{n|1}^{(\k)} (\t) $ takes the form of an infinite series $\sum_{k=0}^\inf c_{\G,\r}^{*(\k)}(k+\n) \,\ex( (k+\n) {\t})$ where $0< \n\leq1$ differs from $\tfrac{v}{8}$ by an integer. This proves that $S_{n\vert1}$ vanishes at  $\k$. 

More generally, for  $h\neq 1$ we  again consider
\begin{align}
S_{n\vert h}\big\lvert_{\e^3,\mathsmaller{3/2}} \s (\t) & = \e^3(\s) \jac(\s,\t)^{\mathsmaller{3/4}}  S_{n\vert h}(\s\t)\\\notag
&=  \lim_{\mathsmaller{K\to\inf}}\sum_{\g \in(\G_{\inf}\backslash\G)_{\mathsmaller{<K}} \s}  \r^{\mathsmaller{-1}}({\g\s^{-1}}) \e^3(\g) \jac(\g,\t)^{\mathsmaller{3/4}}  \ex(\tfrac{\g\t}{8})\;.
\end{align}
If $\k$ has width $v$ in $\G_0(n)$ and has width $u$ in $\G_0(nh)$, we can define for a given scaling element $\til\s$
\[S_{n\vert h}^{(\k)}(\t) = S_{n\vert h}\big\lvert_{\e^3,\mathsmaller{3/2}} \s\, (u(\t+\b)\hspace{.02cm})  \;.
\]
To obtain its expression as an infinite series, define $0\leq m <h$ such that $\r(\s T^v\s^{-1}) =\ex(\tfrac{m}{h})$ and 
 proceed as before, ultimately obtainin the  general expression 
 \be\label{S_nh_other_cusp}
S_{n\vert h}^{(\k)}(\t) =\sum_{k=0}^\inf c_{\G,\r}^{*(\k)}(k+\n) \ex( (k+\n) {\tfrac{u}{v}\t})
\quad, \quad 0< \n \leq1 \;. \ee
More precisely, $\nu=\{\!\{ \tfrac{m}{h}+\tfrac{v}{8}\}\!\}$ is given by the sawtooth function $\{\!\{x\}\!\}$ defined as the difference between a real number and the largest integer that is smaller than it: 
\be\notag
\{\!\{x\}\!\} = \begin{cases} x- \lfloor x \rfloor  & x\in\R\backslash \Z \\ 1 & x\in\Z \end{cases}
\ee
  where the floor function $\lfloor x \rfloor$ denotes the largest integer not exceeding $x$.  
This shows that $S_{n|h}(\t)$ vanishes at all cusps of $\G_0(n)$ and thereby finishes the proof. 
\end{proof}
\end{prop}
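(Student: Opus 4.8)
The strategy is to establish the three defining features of a cusp form separately: holomorphy on $\H$, the modular transformation law of weight $3/2$ with multiplier $\bar\psi$, and vanishing at all cusps of $\G$. Holomorphy is already supplied by Proposition \ref{prop:convRS}, so the real content lies in the last two.

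For the transformation law I would begin by noting that $\bar\psi=\r^{\mathsmaller{-1}}\e^3$ is a bona fide weight $3/2$ multiplier system for $\G$: the factor $\e^3$ is the multiplier of the weight $3/2$ cusp form $\h(\t)^3$, while $\r=\r_{n|h}$ is a character trivial on $\G_\inf$, so their product satisfies the cocycle condition (\ref{defn:multsys}) with $w=3/2$. Combined with the cocycle property of $\jac$, this yields the pointwise identity $S_\g|_{\bar\psi,\mathsmaller{3/2}}\,\s=S_{\g\s}$ for each $\s\in\G$, where $S_\g$ is as in (\ref{defn:RS0gamma})--(\ref{defn:RSpgamma}). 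Since right multiplication by $\s$ permutes the cosets in $\G_\inf\backslash\G$, invariance should follow formally; the one genuine point to address is that $S_{\G,\r}$ is a \emph{regularised} limit of sums over rectangles rather than an unconditional sum. I would handle this by invoking Lemma \ref{lem:convimgsrecs}, which identifies the limit over the parallelograms $(\G_\inf\backslash\G)_{<K}\s$ with $S_{\G,\r}$ itself; chaining the pointwise identity with this lemma then gives $S_{\G,\r}|_{\bar\psi,\mathsmaller{3/2}}\s=S_{\G,\r}$ for all $\s\in\G$.

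It remains to verify vanishing at the cusps. At $\inf$ this is immediate from Lemma \ref{lem:Zconvmodcconv}, which presents $S_{\G,\r}$ as the series $\Phi_{\G,\r}^*$ with lowest term $q^{1/8}$. For a general cusp $\k=\s\inf$ I would pass to a local coordinate by means of a scaling element $\til\s=\s U T^\b$ as in (\ref{scaling}) and examine the slashed function $S_{n|h}^{(\k)}(\t)=S_{n|h}|_{\e^3,\mathsmaller{3/2}}\s\,(u(\t+\b))$, where $v$ and $u$ denote the widths of $\k$ in $\G_0(n)$ and $\G_0(nh)$ respectively. Splitting the coset sum into double cosets $(\G_\inf\backslash\G)\s/\G_\inf^v$ and applying the Lipschitz summation formula exactly as in the passage to (\ref{eqn:LimSum7}) and (\ref{eqn:cstar_Gamma_SKz}), I expect $S_{n|h}^{(\k)}$ to assume the form $\sum_{k\geq 0}c_{\G,\r}^{*(\k)}(k+\n)\,\ex((k+\n)\tfrac{u}{v}\t)$, so that vanishing at $\k$ reduces to the strict positivity of the leading exponent $\n$.

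The determination of $\n$ is the step I expect to be the main obstacle. It must record both the fractional index $v/8$ coming from the factor $\ex(\tfrac{\g\t}{8})$ in $S_\g$ and a phase contribution $m/h$ describing the action of the character $\r$ on the cusp stabiliser, with $0\leq m<h$ fixed by $\r(\s T^v\s^{-1})=\ex(\tfrac{m}{h})$. Keeping track of these two widths simultaneously, together with the phase introduced by $\e^3$ under conjugation by $\s$, is delicate, but the upshot should be $\n=\{\!\{\tfrac{m}{h}+\tfrac{v}{8}\}\!\}$ in terms of the sawtooth function. Since $\{\!\{\cdot\}\!\}$ takes values in $(0,1]$ we have $\n>0$ in every case, so each cusp expansion begins with a strictly positive power of the local uniformiser. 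This establishes vanishing at all cusps and completes the proof that $S_{\G,\r}$ is a cusp form of weight $3/2$ with multiplier $\bar\psi$.
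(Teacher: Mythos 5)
Your proposal is correct and follows essentially the same route as the paper: the cocycle identity $S_\g|_{\bar\psi,3/2}\s=S_{\g\s}$ combined with Lemma \ref{lem:convimgsrecs} for modularity, and the scaling-element/double-coset/Lipschitz analysis yielding the leading exponent $\n=\{\!\{\tfrac{m}{h}+\tfrac{v}{8}\}\!\}>0$ for vanishing at the cusps. No substantive differences to report.
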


Our next task is to establish the variance of the Rademacher sum $R_{\G,\rho}$.   
\begin{prop}\label{prop:Rismock}
The function $R_{\G,\rho}$ is a mock modular form of weight $1/2$ on $\G$ with multiplier system $\psi$ and with shadow $S_{\G,\rho}$.
\end{prop}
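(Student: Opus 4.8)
The plan is to establish the two defining features of a mock modular form separately: first the modular transformation of the non-holomorphic completion $\hat{R}_{\G,\rho}$, and second the fact that its shadow is exactly $S_{\G,\rho}$. The cleanest route is to work termwise, exploiting the covariance of the individual summands. I would first show that the building blocks $R_\g$ of \eqref{defn:RS0gamma}--\eqref{defn:RSpgamma} transform nicely under the slash operator, establishing an identity analogous to \eqref{variance_S_gamma}, namely $R_\g|_{\psi,1/2}\sigma = R_{\g\sigma}$ for $\sigma\in\G$. This follows from the cocycle property \eqref{defn:multsys} of $\psi$ and the behaviour of $\jac$ and the generalised exponentials under composition. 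Were the summands genuinely invariant term-by-term and the sum absolutely convergent over actual cosets, invariance of $R_{\G,\rho}$ would be immediate from Lemma \ref{lem:convimgsrecs}; the subtlety is that the regularisation factor $\reg(\g,\t)$ and the conditionally convergent ``rectangle'' limit are \emph{not} individually $\G$-covariant, so a naive application fails and a correction term appears.

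The key step, and the main obstacle, is to control precisely this failure of invariance. When one applies $\sigma\in\G$ and re-sums over the image parallelogram $(\G_{\inf}\backslash\G)_{<K}\sigma$ (legitimate by Lemma \ref{lem:convimgsrecs}), the non-covariance of $\reg$ produces a discrepancy $R_{\G,\rho}|_{\psi,1/2}\sigma - R_{\G,\rho}$ which I expect to be a period integral of $S_{\G,\rho}$ rather than zero. Concretely, I would isolate the terms $R_\g^0$ carrying the regularisation, write $\reg$ via the incomplete-Gamma representation \eqref{defn:genexpintegral}, and show that the boundary contribution assembles, through the Lipschitz summation machinery already used for \eqref{eqn:LimSum7}, into an Eichler-type integral. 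The natural target is an identity of the shape
\be\notag
\big(R_{\G,\rho}|_{\psi,1/2}\sigma\big)(\t) - R_{\G,\rho}(\t)
= (4i)^{-1/2}\int_{-\bar\sigma^{-1}\inf}^{\infty}(z+\t)^{-1/2}\,\overline{S_{\G,\rho}(-\bar z)}\,{\rm d}z\;,
\ee
so that adding the completion integral of \eqref{def_mock} with $w=1/2$ and shadow $g=S_{\G,\rho}$ exactly cancels the discrepancy. I would verify this by differentiating the completion: applying $\pa_{\bar\t}$ to $\hat{R}_{\G,\rho}(\t)=R_{\G,\rho}(\t)+(4i)^{-1/2}\int_{-\bar\t}^{\infty}(z+\t)^{-1/2}\overline{S_{\G,\rho}(-\bar z)}\,{\rm d}z$ kills the holomorphic part and returns a multiple of $\overline{S_{\G,\rho}(\t)}\,(\im\t)^{-1/2}$, which is the defining non-holomorphic signature of a weight-$1/2$ form with shadow $S_{\G,\rho}$ established in Proposition \ref{prop:SGammacuspform}.

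Having pinned down the shadow, I would then check that $\hat{R}_{\G,\rho}$ is genuinely $\G$-invariant under $|_{\psi,1/2}$. The point is that the completion integral transforms under $\sigma$ precisely by the Eichler period displayed above — this is a standard consequence of the weight-$3/2$ modularity of $S_{\G,\rho}$ with multiplier $\overline{\psi}$ proved in Proposition \ref{prop:SGammacuspform} — and this period is exactly minus the anomaly in $R_{\G,\rho}|_{\psi,1/2}\sigma - R_{\G,\rho}$ computed above. Thus the anomalies cancel and $\hat{R}_{\G,\rho}|_{\psi,1/2}\sigma = \hat{R}_{\G,\rho}$ for all $\sigma\in\G$. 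Combined with the holomorphy and the controlled growth at the cusps coming from Lemma \ref{lem:Zconvmodcconv} (the $q$-expansion \eqref{eqn:F_Gamma} shows at most the single polar term $q^{-1/8}$ at $\inf$, and the argument of Proposition \ref{prop:SGammacuspform} adapts to bound growth at the remaining cusps), this shows $R_{\G,\rho}$ is a weakly holomorphic mock modular form of weight $1/2$ on $\G$ with multiplier $\psi$ and shadow $S_{\G,\rho}$, as claimed. I anticipate the genuinely delicate part to be the bookkeeping of the rectangle-limit boundary terms that produce the Eichler integral, since it is there that the conditional convergence and the regularisation factor interact.
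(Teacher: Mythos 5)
Your proposal is correct and follows essentially the same route as the paper's proof: the regularisation factor is rewritten via the incomplete-Gamma representation \eq{defn:genexpintegral} so that each summand splits as an ``unregularised'' covariant piece plus an Eichler-type integral of the corresponding shadow summand ($R_\g-P_\g=J_{\g^{-1}\inf}S_\g$ in the paper's notation), the anomaly $R_{\G,\r}|_{\psi,\mathsmaller{1/2}}\s-R_{\G,\r}$ is thereby identified as the period integral $-J_{\s^{-1}\inf}S_{\G,\r}$ attached to the cusp $\s^{-1}\inf$, and the completion of \eq{def_mock} cancels it. The only (harmless) difference is bookkeeping: in the paper the anomaly arises purely termwise from the non-covariance of $\reg(\g,\t)$, with Lemma \ref{lem:convimgsrecs} showing the parallelogram and rectangle limits agree exactly, so no Lipschitz-summation boundary terms actually enter at this stage.
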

\begin{proof}

First  observe that $R_\g = \psi(\g) \ex(-\tfrac{\g\t}{8}) \jac(\g,\t)^{1/4} {\rm reg}(\g,\t)$ would transform nicely if it weren't for the regularisation factor. Therefore, it would be useful to separate from $R_\g$ the ``unregularised'' part 
\begin{gather}\label{defn:Pgamma}
	P_{\g}(\t)=\psi(\g)\ex(-\g\t/8)\jac(\g,\t)^{1/4},
\end{gather}
for $\g\in \G$, and consider
\[
R_{\g}(\t)-P_{\g}(\t)
	=
	\psi(\g)
	\ex(-\tfrac{\g\inf}{8})\jac(\g,\t)^{1/4}
	\left(
	\ex(X,1/2)-\ex(X)
	\right)
	\]
with $X= \tfrac{\g\inf-\g\t}{8}$. 
Using the definition \eq{defn:genexpintegral}, we have
\[ \ex(X,s )- \ex(X) = -\frac{\ex(X)}{\G(s)} \int_{2 \p i X}^\inf  e^{-t} t^{s-1}\,dt\;.\]  
Setting $t=\tfrac{2\p i}{8}(\g z-\g\t)$ in the integral and using the identity
\be\label{eqn:jacjac} 
	\jac(\g,z)\jac(\g,\t)(z-\t)^{2}=(\g z-\g \t)^2
\ee
we get 
\[ \ex(X,1/2 )- \ex(X) =\frac{1}{2}\ex(\tfrac{1}{8})\ex(\tfrac{\g\inf}{8}) \,{\rm jac}(\g,\t)^{\mathsmaller{-1/4}} \int^\inf_{\g^{-1}\inf} \ex(-\tfrac{\g z}{8}) \, {\rm jac}^{\mathsmaller{3/4}}(\g,z) \,(z-\t)^{\mathsmaller{-1/2}} \,dz\;.
\]

It will be convenient to introduce an integral operator $J_x$ for $x$ in the extended upper half-plane ($\H \cup \QQ\cup \inf$)
which acts on a holomorphic function $g$ on $\H$ (with sufficiently rapid decay as $\Im(z)\to \inf$) as 
\be\label{defn:Jw}
	{\left(J_xg\right)(\t)}
	=\frac{\ex(\tfrac{1}{8})}{2}
	\int_{\bar x}^{\inf}
	\bar g(z)
	(z-{\t})^{-1/2}
	{\rm d}z\;.
\ee

From the definition \eq{def_mock}, we see that we have a weight 1/2 mock modular form with shadow $g$ if $\hat h = h + J_\t g$ transforms as a weight 1/2 modular form. 

Comparing with the definition of the integral operator $J_x$, we have 
$$
R_{\g}(\t)-P_{\g}(\t)=\left(J_{\g^{{-1}}\inf}S_{\g}\right)(\t)\;.
$$
Note in particular that this is trivially true when $\g$ is in the trivial coset given by upper triangular matrices, since both sides of the equation vanish. 

By the fact that $\psi$ is a multiplier system for $\G$ of weight $1/2$, we immediately have for an element $\s$ of $\G$ \[P_\g\lvert_{\psi,\mathsmaller{1/2}} \s= P_{\g\s}\,,\;{\rm and }\;\;
 (J_{\g^{-1}\inf}S_\g)\lvert_{\psi,\mathsmaller{1/2}} \s= (J_{\s^{-1}\g^{-1}\inf }- J_{\s^{-1}\inf })S_{\g\s}\;\;.\]
 Upon summation over the representatives of the rectangle $(\G_\inf\backslash\G)_{<K}, K\to \inf$ and evoking  Lemma \ref{lem:convimgsrecs}, we obtain 
 \be\label{eqn:RslashJS}
	R_{\G,\r}|_{\psi,\mathsmaller{1/2}}\,\s
	=
	R_{\G,\r}
	-J_{\s^{-1}\inf}S_{\G,\r}\;.
\ee

Now it is an easy calculation to show that\footnote{In fact, one can  show that the non-holomorphic completion $R_{\G}+\tilde{S}_{\G}$ also has a Rademacher form given by \be
\hat{R}_{\G}(\t)=\lim_{K\to \infty}\sum_{\g\in(\G_{\infty}\backslash\G)_{<K}}
	\e(\g)^{\mathsmaller{-3}}
	\ex(-\tfrac{\g\t}{8})
	\com(\g\t)
	\jac(\g,\t)^{1/4}
\ee
where $\com(\t)$ is the {\em non-holomorphic completion factor} given in terms of the incomplete Gamma function, $\G(\alpha,x)=\int_x^{\infty}t^{\alpha-1}e^t{\rm d}t$, by
$$
\com(\t)=1-\G(1/2,\pi\Im(\t)/2)/\sqrt{\pi}\;.
$$ 
}
$$
	\left(R_{\G,r} + J_\t S_{\G,\r}\right)\Big|_{\psi,\mathsmaller{1/2}}\,\s=R_{\G,r} + J_\t S_{\G,\r}\;,
$$
which by definition \eq{def_mock} proves the claim.
\end{proof}

\section{Coincidence}
\label{sec:Coin}
\setcounter{equation}{0}

Recall from \S\ref{sec:Radsums} that we write $R_{n|h}$ for $R_{\G,\rho}$ when $\G=\G_0(n)$ and $\rho=\rho_{n|h}$, and similarly for $S_{n|h}$. 
In \S\ref{sec:Var} we have established the transformation property of $R_{n|h}$ and $S_{n|h}$ under the group $\G_0(n)$. In this section we will explicitly identify them. More precisely, we will first prove in  \textsection\ref{SSS} that 
\be\label{S_coincidence} S_{n|h} = \begin{cases} \l_n \h^3, \l_n \in \C& h=1 \\ 0 & h>1 \end{cases} \ee
for the $(n,h)=(n_g,h_g)$ that appear in the cycle shapes of $[g]\subset M_{24}$, (cf. Table \ref{examples_eta}),  and later in \textsection\ref{RRR} complete the proof our main theorem \ref{thm:HisR} by showing $H_g=-2R_{n|h}$, assuming the result of \S\ref{Spec}.

\subsection{Determining $S$}
\label{SSS}
Let $n$ and $h$ be positive integers such that $h$ divides both $n$ and $24$ and consider the function 
$$
F_{n|h}(\t)=\frac{S_{n|h}(\t)}{\eta(\t)^3}\;.
$$ 
Since $S_{n|h}$ has the same multiplier system as $\eta^3$ on $\G_0(nh)$ and $\eta$ does not vanish on the upper half-plane, $F_{n|h}$ is a modular function for $\Gamma_0(nh)$ that satisfies the transformation rule
\be\label{eqn:fnh_{n|h}xfm}
	F_{n|h}(\g\t)=\rho(\g)F_{n|h}(\t)
\ee
for $\g\in \G_0(n)$ where $\rho=\rho_{n|h}$ is given explicitly by \eq{rho_multiplier}. In particular, $F_{n|h}$ induces a morphism, which we denote by $f_{n|h}$, of Riemann surfaces $X_0(nh)\to\PP^1$ where $X_0(N)$ denotes the {\em modular curve} of level $N$ and is formed by taking the quotient of the {\em extended upper half-plane} 
\[
\H^*=\H\cup\QQ\cup\{\inf\}
\]
by the action of the group $\G_0(N)$. We will denote the corresponding canonical map by 
\be\label{defn:xtduhp}
\varphi_{N}: \H^*\to X_0(N)\;.
\ee

Note that there are no non-constant maps $X_0(nh)\to\C$, so either $F_{n|h}$ has a pole at some cusp of $\G_0(nh)$, or $S_{n|h}=\lambda_{n|h}\eta^3$ for some $\lambda_{n|h}\in\C$. Of course, $S_{n|h}$ and $\eta^3$ have different automorphy for $\G_0(n)$ when $h>1$, and an identity $S_{n|h}=\lambda_{n|h}\eta^3$ implies that $\lambda_{n|h}$ and hence $S_{n|h}$ are identically zero. 
Our main strategy to show \eq{S_coincidence} is to show that $F_{n|h}$ is bounded at all cusps of $\G_0(nh)$ for all the pairs $(n,h)$ under consideration.

Since the Fourier expansion of both $S_{n|h}$ and $\h^3$ around the infinite cusp starts with a term $q^{1/8}$(cf.  \eq{lem:Zconvmodcconv}\hspace{.1mm}), we see that $F_{n|h}(\t)$ is bounded as $\t \to i\inf$ and can only have poles at the cusps of $\G_0(nh)$ that are not represented by infinity. Furthermore, from the transformation \eq{eqn:fnh_{n|h}xfm} of $F_{n|h}$ under $\G_0(n)$, we see that $f_{n|h}$ has a pole at a given cusp of $\G_0(nh)$ if and only if it has a pole of the same order at every cusp that is an image of it under $\G_0(n)$. Therefore, to prove \eq{S_coincidence} it is sufficient to show that $F_{n|h}$ are bounded at all cusps of $\G_0(n)$ other than the infinite cusp. 

In order to analyse the behaviour of $F_{n|h}$ at the cusp representative $\k \in \QQ$, as before we will now consider its transformation under a given scaling element (cf. \eq{scaling})
$$\til \s = \s \bem \sqrt{u} & 0 \\ 0&1/ \sqrt{u}  \eem \bem 1 & \b \\ 0&1  \eem \quad,\quad \s \in \SL_2(\Z) \;,\; \b\in\R$$ for $\G_0(nh)$ at $\k =\til \s \inf$: 
\be\label{def_f_at_cusp}
F_{n|h}^{(\k)} (\t) =F_{n|h} (\til\s \t) = \frac{S_{n|h}(\til\s \t)}{\h^3(\til\s \t)} = \frac{S_{n|h}^{(\k)}(\t)}{  \h^3(u(\t+\b)\hspace{.02cm})} \;.
\ee
When $\til\s$ normalises $\G_0(nh)$, $F_{n|h}^{(\k)} (\t)  $ again defines a modular function for $\G_0(nh)$.
The order of poles or zeros of the function $F_{n|h}^{(\k)} (\t)$ at $\t\to i\inf$ gives the order of poles or zeros of the function $f_{n|h}: X_0(nh) \to \PP^1$  at the point $x$ given by the image of the $\k$ under the canonical map $\varphi_{nh}$. 

In the proofs, we will be especially interested in the scaling elements that are elements of the so-called Atkin-Lehner involutions $W_u$, which are defined for every exact divisor $u$ of $n$ as the set of matrices of the form
\[
\frac{1}{\sqrt{u}}\bem a u & b \\ c n & du \eem \quad,\quad a,b,c,d \in \Z
\]
with determinant 1. We say $u$ is an exact divisor of $n$ if $({n}/{u},u)=1$. An important element of $W_n$ is the {\it Fricke involution}, given by
\be \label{fricke}
w_n = \bem 0& -1/\sqrt{n} \\ \sqrt{n} &0  \eem \;.
\ee

From the expansion of $S_{n|h}^{(\k)}(\t)$ recorded in \eq{S_nh_other_cusp}, we see that $F_{n|h}^{(\k)} (\t)$ has the expansion  
\be\label{degree_pole}
F_{n|h}^{(\k)} (\t) =\sum_{k= 0}^\inf C_{n|h}(k)\, q^{\ell+\frac{u}{v}k}   \quad,\quad \ell=\tfrac{u}{v}\,\{\!\{ \tfrac{m}{h}+\tfrac{v}{8}\}\!\}-\tfrac{u}{8}\;.
\ee 
As before, $u$ denotes the width of the cusp in $\G_0(nh)$, $v$ denotes the width of the cusp in $\G_0(n)$, and $\ex(\tfrac{m}{h}) = \r(\s T^v\s^{-1})$ as defined in \eq{width}. 
Moreover, since $F_{n|h}^{(\k)} (\t) $ is invariant under $\langle \pm T\rangle\subset\til\s^{-1}\G_0(nh)\til\s$, its Fourier expansion can only involve integral powers of $q$ and it follows immediately that
\be\label{rough_degree_pole}
F_{n|h}^{(\k)} (\t)= {\cal O}(q^\ell)\quad,\quad \ell \geq 1+ \lfloor -\tfrac{u}{8} \rfloor \quad,\quad \ell \in \Z\;.
\ee
More generally, from the above formula it is also clear that, to prove that $f_{n|h}$ is bounded at $\k$ it is sufficient to prove that 
\be\label{nleq8condition1}
\{\!\{ \tfrac{m}{h}+\tfrac{v}{8}\}\!\} \geq \tfrac{v}{8}\;.
\ee

\begin{table}
 \resizebox{1.01\textwidth}{!}{
\begin{tabular}[H]{c|ccccccccccccccc}\toprule
$n$ & 1&2&3&4&5&6&7&8&10&11&12&14&15&21&23 
\\  \midrule
cusps & /& $0$& $0$& $0,\tfrac{1}{2}$& $0$ & $0,\tfrac{1}{2},\tfrac{1}{3}$& $0$& $0,\tfrac{1}{2},\tfrac{1}{4}$& $0,\tfrac{1}{2},\tfrac{1}{5}$& $0$ & $0,\tfrac{1}{2},\tfrac{1}{3},\tfrac{1}{4},\tfrac{1}{6}$& $0,\tfrac{1}{2},\tfrac{1}{7}$& $0,\tfrac{1}{3},\tfrac{1}{5}$& $0,\tfrac{1}{3},\tfrac{1}{7}$& $0$
\\\midrule  widths&/&2&3&4,1&5&6,3,2&7&8,2,1&10,5,2&11&12,3,4,3,1&14,7,2&15,5,3&21,7,3&23
 \\ 
\bottomrule
\end{tabular}}
\caption{\label{Cusps of G}\footnotesize{In this table we list the cusps other than the one at $i\inf$, which has width 1, of the groups $\G_0(n)$ for $n=n_g$ in Table \ref{examples_eta}. Note that, in these cases we have an extra (non-infinite) cusp, represented by $\tfrac{1}{q}$, for every divisor $q$ of  $n$, and its width is given by the smallest positive integer $v$ such that $n|q v^2$. Moreover, the corresponding multiplier $\r(\s T^v\s^{-1}) =\ex(\tfrac{m}{h})$ is given by $m = \tfrac{q^2v}{n}(1+qv)$ mod $h$.}
}
\end{table}

We shall first consider the cases $n\leq 8$ when there are no cusps of $\G_0(n)$ with width larger than $8$.  For these cases we have the following Lemma: 
\begin{lem}\label{lem:Spropeta1to8}
Consider the group $\G_0(n)$ with $1\leq n\leq 8$ and multiplier $\r_{n|h}$ defined in \eq{rho_multiplier} with $h$ dividing both $n$ and $24$. The pole-free condition \eq{nleq8condition1} is satisfied for all cusps of $\G_0(n)$ if and only if $(n,h)=(n_g,h_g)$ is one of the 13 pairs that correspond to the cycle shapes of $M_{24}$, as collected in Table \ref{examples_eta}.
\end{lem}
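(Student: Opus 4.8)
My plan is to recast the pole-free condition \eqref{nleq8condition1} as an elementary arithmetic inequality in the integers $m$, $h$ and $v$, and then to run through the finitely many cusps listed in Table \ref{Cusps of G}. The hypothesis $1\leq n\leq 8$ is what keeps this manageable: every non-infinite cusp of $\G_0(n)$ then has width $v$ with $0<v\leq n\leq 8$, so $\tfrac v8\in(0,1]$, while $m$ is reduced modulo $h$ so that $\tfrac mh\in[0,1)$. These two ranges are exactly what make the sawtooth analysis clean.

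First I would dispose of the sawtooth. Since $\tfrac mh+\tfrac v8$ lies in $(0,2)$, I split into cases: if this sum is $<1$ then $\{\!\{\tfrac mh+\tfrac v8\}\!\}=\tfrac mh+\tfrac v8\geq\tfrac v8$; if it equals $1$ then the sawtooth is $1\geq\tfrac v8$; and if it exceeds $1$ then the sawtooth equals $\tfrac mh+\tfrac v8-1$, so that \eqref{nleq8condition1} would force $\tfrac mh\geq1$, which is impossible. Hence \eqref{nleq8condition1} is equivalent to the single inequality
\[
	8m\leq h(8-v)\,.
\]
In particular, when $h=1$ the multiplier $\rho_{n|1}$ is trivial, so $m=0$ at every cusp and the condition holds automatically. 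This already settles the eight pairs with $h=1$, all of which occur in Table \ref{examples_eta}.

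For the remaining pairs with $h>1$ I expect the cusp represented by $0$, of width $v=n$, to be the decisive one. Applying the formula $m\equiv\tfrac{q^2v}{n}(1+qv)\pmod h$ from the caption of Table \ref{Cusps of G} with $q=1$ and $v=n$, and using $h\mid n$, I find $m\equiv 1+n\equiv 1\pmod h$, so $m=1$ there. The condition at this cusp then reads $8\leq h(8-n)$. For $n=8$ this fails for every $h>1$, eliminating $(8,2),(8,4),(8,8)$; for $n=6$ it holds only when $h\geq 4$, i.e.\ only for $(6,6)$, eliminating $(6,2),(6,3)$; and for $n\in\{2,3,4\}$ it holds for all admissible $h>1$. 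It then remains to check the other cusps of the survivors, namely $\tfrac12$ of $\G_0(4)$ for $(4,2),(4,4)$ and $\tfrac12,\tfrac13$ of $\G_0(6)$ for $(6,6)$ (the pairs $(2,2),(3,3)$ have no further cusps); substituting the tabulated widths and the corresponding $m$ into $8m\leq h(8-v)$ confirms the condition at each. Assembling these computations leaves exactly the $13$ pairs $(n_g,h_g)$ with $n_g\leq 8$ recorded in Table \ref{examples_eta}.

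I do not anticipate any analytic difficulty: once the inequality $8m\leq h(8-v)$ is established, the rest is a bounded case check. The one point that genuinely demands care is the correct evaluation of the multiplier exponent $m$ at each cusp, i.e.\ the identity $\rho(\sigma T^v\sigma^{-1})=e(m/h)$ with $m\equiv\tfrac{q^2v}{n}(1+qv)\pmod h$, since an error there would propagate through every case. The clean observation that $m\equiv1\pmod h$ at the cusp $0$, forced by $h\mid n$, is what makes that widest cusp the binding constraint and organizes the entire verification.
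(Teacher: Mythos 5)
Your proposal is correct and follows essentially the same route as the paper: dispose of $h=1$ immediately, use the tabulated formula $m\equiv\tfrac{q^2v}{n}(1+qv)\pmod h$ to see that the zero cusp (where $m\equiv 1$ since $h\mid n$) is the binding constraint for $h>1$, and then finish with a finite check of the remaining cusps of the surviving pairs. Your reformulation of the sawtooth condition as the single linear inequality $8m\leq h(8-v)$ is a clean packaging of the case analysis the paper performs directly on $\{\!\{\tfrac mh+\tfrac v8\}\!\}$, but it is not a different argument.
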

\begin{proof}

First consider the case $h=1$. In this case, since there are no cusps with $v>8$, we get from \eq{rough_degree_pole} that $f_{n|h}$ has no pole for all $1\leq n\leq 8$, $h=1$. 
 
For $h>1$, we need more refined information about the order of zeros of $S^{(\k)}_{n|h}$ to show that, for a given $n<9$, among all the possible values of $h$ that divide both $n$ and $24$,  the condition \eq{nleq8condition1} is satisfied if and only if $(n,h)=(n_g,h_g)$ is one of the 13 pairs that correspond to the cycle shapes of $M_{24}$. 

For the cusp with a representative $1/q$ and width $v$, taking $\s = \big(\begin{smallmatrix} 1&1\\ q&q+1 \end{smallmatrix}\big)$ we get 
\be\label{formula_for_m} m = \tfrac{q^2v}{n}(1+qv) \quad {\rm mod} \quad h\;. \ee 
In particular, we have $m=q$ and $v=n/q$ if $q$ is an exact divisor of $n$. For the zero cusp ($q=1$, note that 0 and 1 are related by $T\in\G_0(n)$), therefore, we need $h$ to satisfy  $\{\!\{ \tfrac{1}{h}+\tfrac{n}{8}\}\!\} \geq \tfrac{n}{8}$. It is straightforward to check that this condition is satisfied if and only if $(n,h)=(n_g,h_g)$ is one of the 13 pairs that show up in Table \ref{examples_eta}.

It remains to be shown that this condition is also satisfied at cusps other than $0$ or $\inf$. 
First we check that $\{\!\{ \tfrac{q}{h}+\tfrac{n/q}{8}\}\!\} \geq \tfrac{n/q}{8}$ is indeed satisfied for $n=h=6$, $q=2$ and $q=3$.
Finally, for $n=4$, we have $q=2$, $v=1$ and $m=3$ and we need to check $\{\!\{ \tfrac{3}{h}+\tfrac{1}{8}\}\!\} \geq \tfrac{1}{8}$, which is indeed the case for $h=4$.
\end{proof}

\begin{cor}

For $(n,h)=(n_g,h_g)$ being one of the 13 pairs with $1\leq n\leq 8$ that arise from in the cycle shapes of $M_{24}$, (cf. Table \ref{examples_eta}), we have $S_{n|1} = \l_{n} \h^3$ and $S_{n|h} =0$ for $h>1$.
\end{cor}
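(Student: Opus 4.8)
The plan is to obtain the corollary as an essentially immediate packaging of Lemma~\ref{lem:Spropeta1to8} together with the global geometry of the modular curve. The guiding principle, already set up in \S\ref{SSS}, is that $F_{n|h}=S_{n|h}/\h^3$ descends to a morphism $f_{n|h}\colon X_0(nh)\to\PP^1$, so that once $F_{n|h}$ is shown to be pole-free at every cusp it must be constant, whereupon the two cases $h=1$ and $h>1$ follow from a comparison of automorphy factors.

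First I would invoke Lemma~\ref{lem:Spropeta1to8}: for each of the $13$ pairs $(n,h)=(n_g,h_g)$ with $1\leq n\leq 8$, the inequality \eq{nleq8condition1} holds at every cusp of $\G_0(n)$. Reading this back through the cusp expansion \eq{degree_pole}, the inequality $\{\!\{\tfrac{m}{h}+\tfrac{v}{8}\}\!\}\geq\tfrac{v}{8}$ is exactly the statement that the leading exponent $\ell=\tfrac{u}{v}\{\!\{\tfrac{m}{h}+\tfrac{v}{8}\}\!\}-\tfrac{u}{8}$ is non-negative, so $F_{n|h}^{(\k)}(\t)$ is holomorphic as $\t\to i\inf$; that is, $F_{n|h}$ has no pole at $\k$. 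Combined with the boundedness at the infinite cusp (both $S_{n|h}$ and $\h^3$ begin with $q^{1/8}$, cf.\ Lemma~\ref{lem:Zconvmodcconv}), this shows $F_{n|h}$ is bounded at all cusps of $\G_0(n)$. Using the transformation rule \eq{eqn:fnh_{n|h}xfm}, pole-freeness at a cusp of $\G_0(n)$ is equivalent to pole-freeness at each of its $\G_0(n)$-translates, so boundedness at the finitely many cusps of $\G_0(n)$ propagates to boundedness at every cusp of the finer curve $X_0(nh)$.

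It then follows that $f_{n|h}$ is a holomorphic map from the compact Riemann surface $X_0(nh)$ to $\C$, hence constant, so $S_{n|h}=\l_{n|h}\h^3$ for some $\l_{n|h}\in\C$. For $h=1$ this is the asserted identity $S_{n|1}=\l_{n}\h^3$. For $h>1$ I would finish by comparing multiplier systems on $\G_0(n)$: the cusp form $S_{n|h}$ transforms with multiplier $\overline{\psi}=\r_{n|h}^{-1}\e^3$, whereas $\h^3$ transforms with $\e^3$, and $\r_{n|h}$ is nontrivial on $\G_0(n)$ precisely when $h>1$; an equality $S_{n|h}=\l_{n|h}\h^3$ is therefore consistent with both automorphies only if $\l_{n|h}=0$, giving $S_{n|h}\equiv 0$.

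The bulk of the work was already discharged in Lemma~\ref{lem:Spropeta1to8}, so no genuinely hard analytic step remains. I expect the only point deserving care to be the bookkeeping in the middle paragraph: verifying that the cusp parameters $(u,v,m)$ entering \eq{degree_pole} are correctly transported by the scaling elements $\til\s$, and confirming that ranging over the $\G_0(n)$-orbits of the cusps of $\G_0(n)$ indeed exhausts all cusps of $X_0(nh)$, so that pole-freeness at level $n$ genuinely controls the morphism $f_{n|h}$ at level $nh$.
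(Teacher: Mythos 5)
Your argument is correct and follows the paper's own route: the corollary is obtained by combining Lemma \ref{lem:Spropeta1to8} with the framework already laid out at the start of \S\ref{SSS} (pole-freeness at all cusps forces $f_{n|h}$ to be constant, hence $S_{n|h}=\l_{n|h}\h^3$, and for $h>1$ the mismatch of multiplier systems on $\G_0(n)$ forces $\l_{n|h}=0$). The bookkeeping points you flag (transport of $(u,v,m)$ and the fusion of cusps of $\G_0(nh)$ under $\G_0(n)$) are exactly the ones the paper handles via \eq{eqn:fnh_{n|h}xfm} and \eq{degree_pole}, so nothing further is needed.
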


For $n>8$, a direct proof via \eq{nleq8condition1} is less straightforward and instead we would like to use arguments involving the topology of $X_0(n)$. When $X_0(nh)$ has genus one, we have the following two lemmas:
\begin{lem}\label{lem:Spropeta111415}
If $n\in\{11,14,15\}$ then $S_{n|1}=\lambda_{n}\eta^3$ for some $\lambda_{n}\in\C$.
\end{lem}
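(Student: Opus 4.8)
The plan is to recognise $F_{n|1}=S_{n|1}/\eta^3$ as a meromorphic function on the compact modular curve $X_0(n)$, which for $n\in\{11,14,15\}$ has genus one, and to argue that its pole divisor is too small to support a non-constant function. Since $h=1$ the character $\rho=\rho_{n|1}$ is trivial, so $F_{n|1}$ is genuinely $\G_0(n)$-invariant and descends to a morphism $f_{n|1}\colon X_0(n)\to\PP^1$ in the sense of \eqref{defn:xtduhp}. Because $\eta$ is non-vanishing on $\H$ and $S_{n|1}$ is holomorphic, $f_{n|1}$ is holomorphic away from the cusps, so it remains only to bound its order at each cusp.

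First I would read off the local behaviour from \eqref{degree_pole}. With $h=1$ we have $m=0$ and $u=v$, so at a cusp of width $v$ the leading exponent collapses to $\ell=\{\!\{v/8\}\!\}-v/8$. Consulting the widths in Table~\ref{Cusps of G}, every cusp of $\G_0(n)$ other than the one at $0$ has width $v\le 8$; for such $v$ one has $\{\!\{v/8\}\!\}=v/8$ and hence $\ell=0$, so $f_{n|1}$ is regular there (the cusp $\infty$ being already covered by Lemma~\ref{lem:Zconvmodcconv}). The cusp represented by $0$ has width $n$, and since $\lfloor -n/8\rfloor=-2$ for each $n\in\{11,14,15\}$, the coarse bound \eqref{rough_degree_pole} gives $\ell\ge 1+\lfloor -n/8\rfloor=-1$. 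Thus $f_{n|1}$ is holomorphic on $X_0(n)$ except for at most a simple pole at the single point $x_0=\varphi_n(0)$.

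The decisive step is then topological. On a curve of genus one the canonical divisor has degree $2g-2=0$, so Riemann--Roch applied to the degree-one divisor $x_0$ gives $\ell(x_0)=\deg(x_0)-g+1+\ell(K-x_0)=1$, since $\ell(K-x_0)=0$ for the negative-degree divisor $K-x_0$. In other words, the only meromorphic functions with at most a simple pole at a single point are the constants. As $f_{n|1}\in L(x_0)$ by the previous paragraph, it must be constant, whence $S_{n|1}=\lambda_n\eta^3$ for some $\lambda_n\in\C$.

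The only point requiring care is the bookkeeping at the cusp $0$: the argument succeeds precisely because the pole there has order at most one, giving a pole divisor of degree one. Were the order two, a genus-one curve would admit a non-constant degree-two function and the method would break down. It is the numerical fact that the widest cusp still has width $n<16$---so that \eqref{rough_degree_pole} forces $\ell\ge-1$ rather than something more negative---that keeps these three cases within reach of the genus-one Riemann--Roch argument.
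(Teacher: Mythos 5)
Your proposal is correct and follows essentially the same route as the paper: bound the pole of $f_{n|1}$ at the image of the cusp $0$ to order at most one using \eqref{rough_degree_pole} (all other cusps having width at most $8$), then invoke the genus-one topology of $X_0(n)$ to rule out a non-constant function with a single simple pole. The paper phrases the last step as ``a degree-one map to $\PP^1$ would force genus zero,'' whereas you compute $\dim K_X(x_0)=1$ directly via Riemann--Roch; these are the same fact.
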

\begin{proof}

For these values of $n$ there is just one cusp of $\G_0(n)$ represented by $0$ which has width $v=n>8$. 
Hence, from \eq{rough_degree_pole} we see that the function $F_{n|1}^{(\k=0)}$ either has a simple pole at $0$ or is a constant. Suppose the former were true, then $\fno_{n|1}$ defines a degree one function from $X_0(n)$ to $\PP^1$, implying that $X_0(n)$ has genus zero as a Riemann surface. For $n\in\{11,14,15\}$ however, the genus of $X_0(n)$ is one, so we conclude that $\fno_{n|1}$ is constant in these cases.

\end{proof}

\begin{lem}
Suppose that $n\in\{10,12\}$ and $h=2$. Then $S_{n|h}=0$.
\end{lem}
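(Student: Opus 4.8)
The plan is to run the same genus argument used in the proof of Lemma~\ref{lem:Spropeta111415}, but now on the modular curve $X_0(nh)$, which has genus one for both $nh=20$ and $nh=24$. As in \S\ref{SSS} I would set $F_{n|h}=S_{n|h}/\eta^3$; since $\eta$ has no zeros on $\H$, this is a meromorphic function that transforms under $\G_0(n)$ by the character $\rho=\rho_{n|h}$ of \eqref{rho_multiplier}, hence is $\G_0(nh)$-invariant and descends to a map $f_{n|h}\colon X_0(nh)\to\PP^1$ whose only possible poles sit at the cusps. The decisive structural point is that $h=2>1$, so $\rho$ is non-trivial: a constant value of $F_{n|h}$ would have to vanish identically, and therefore it is enough to show that $f_{n|h}$ cannot be a non-constant function.

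The first step is to confine the poles to a single cusp. Computing cusp widths $N/\gcd(c^2,N)$ one checks that, apart from the cusp $0$ (of width $nh$), every cusp of $\G_0(nh)$ has width at most $8$: for $N=20$ the cusps $\tfrac12,\tfrac14,\tfrac15,\tfrac1{10}$ have widths $5,5,4,1$, and for $N=24$ the cusps $\tfrac12,\tfrac13,\tfrac14,\tfrac16,\tfrac18,\tfrac1{12}$ have widths $6,8,3,2,3,1$. At any cusp of width $u\le 8$ the crude estimate \eqref{rough_degree_pole} gives leading exponent $\ell\ge 1+\lfloor -u/8\rfloor\ge 0$, so $f_{n|h}$ is holomorphic there. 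Thus all poles of $f_{n|h}$ are concentrated at the image of $0$ under $\varphi_{nh}$.

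The heart of the argument is to pin down the order at $0$ exactly, since at this cusp \eqref{rough_degree_pole} only gives $\ell\ge -2$, which is too weak. Here I would use the sharp expansion \eqref{degree_pole}: at $\kappa=0$ the $\G_0(n)$-width is $v=n$ and the $\G_0(nh)$-width is $u=nh$, so $u/v=h=2$, and taking $\sigma=\big(\begin{smallmatrix}0&-1\\1&0\end{smallmatrix}\big)$ gives $\sigma T^{n}\sigma^{-1}=\big(\begin{smallmatrix}1&0\\-n&1\end{smallmatrix}\big)$, whence $\rho(\sigma T^{n}\sigma^{-1})=e(1/h)=e(1/2)$, i.e.\ $m=1$. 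Then $\nu=\{\!\{\tfrac12+\tfrac n8\}\!\}$ equals $\tfrac34$ for $n=10$ and $1$ for $n=12$, and in both cases $\ell=h\nu-\tfrac{nh}{8}=-1$. Hence $f_{n|h}$ has at most a simple pole at $\varphi_{nh}(0)$ and is holomorphic elsewhere.

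Finally I would conclude exactly as in Lemma~\ref{lem:Spropeta111415}. If $S_{n|h}\neq 0$ then $F_{n|h}$ is non-constant (a non-trivial $\rho$ forbids a non-zero constant value), so it must have a pole; the previous steps then force a single simple pole and no other, making $f_{n|h}$ a degree-one morphism $X_0(nh)\to\PP^1$, i.e.\ an isomorphism. This would force $X_0(nh)$ to have genus zero, contradicting the genus-one values for $X_0(20)$ and $X_0(24)$. Therefore $S_{n|h}=0$. The step I expect to be most delicate is the exact evaluation $\ell=-1$ at the cusp $0$ via \eqref{degree_pole}: the weaker bound \eqref{rough_degree_pole} permits a double pole, which would only yield a degree-two map --- compatible with genus one --- and so would not close the argument.
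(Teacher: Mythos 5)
Your proof is correct and follows essentially the same route as the paper's: confine any pole of $f_{n|h}=S_{n|h}/\eta^3$ to the cusp $0$ of $X_0(nh)$ via the width bound, use the exact expansion to show the pole there has order at most one (the paper likewise computes this from the displayed formula with $2v=u=2n$ and $m=1$), and rule out a degree-one map by the genus-one fact for $X_0(20)$ and $X_0(24)$. The only difference is that you spell out the cusp widths and the evaluation of $m$ and $\ell$ explicitly, and correctly invoke the non-triviality of $\rho$ to pass from ``constant'' to ``zero,'' a point the paper handles once in the preamble of its \S 7.1.
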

\begin{proof}
The only cusp of $\G_0(20)$ that has width greater than $8$ is that represented by $0$, and similarly for $\G_0(24)$, so in either case the only possible pole of $f_{n|h}$ is at the image under $\varphi_{nh}:\H^*\to X_0(nh)$ of the zero cusp. 
The degree of the pole, if the pole exists, is given by \eq{degree_pole} with $2v=u=2n$ and $m=1$ equals 1, and so must be 1.  
In this case $f_{n|h}:X_0(nh)\to\PP^1$ is a degree one map, implying that $X_0(nh)$ has genus zero. But this is not the case for $nh=20$ or $nh=24$. We conclude that $f_{n|h}$ has no poles and is thus constant. The claim follows.

\end{proof}

For the remaining three classes with $(n,h) = (23,1), (21,3), (12,12)$, the modular curves $X=X_0(nh)$ have higher genera and we will use the Riemann-Roch theorem:
\be\label{eqn:RR}
	\dim K_X(D)=\dim\Omega_X(D)+\deg (D)+1-\gen (X)\;
\ee
to gain control over the allowed properties of the function $f_{n|h}$. Here $D$ denotes a divisor on the Riemann surface $X$, $K_X(D)$ denotes the vector space of meromorphic functions $f$ with the divisor $(f)$ given by its zeros and poles satisfying $(f)+D\geq 0$, and $\Omega_X(D)$ denotes the space of holomorphic differentials $\o$ on $X$ satisfying $(\omega)-D\geq 0$.  

\begin{lem}\label{lem:Spropeta23}
If $n=23$ then $S_{n|1}=\lambda_{n}\eta^3$ for some $\lambda_{n}\in\C$.
\end{lem}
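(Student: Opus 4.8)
The plan is to show that $F_{23|1}=S_{23|1}/\eta^3$ is constant, which then forces $S_{23|1}=\lambda_{23}\eta^3$ exactly as in the genus-one cases treated above. Since $h=1$ the multiplier $\rho_{23|1}$ is trivial, so by \eq{eqn:fnh_{n|h}xfm} the weight-zero function $F_{23|1}$ is $\G_0(23)$-invariant and descends to a morphism $f_{23|1}\colon X_0(23)\to\PP^1$; recall $\gen(X_0(23))=2$. Because $\eta^3$ is holomorphic and nonvanishing on $\H$ and $S_{23|1}$ is holomorphic, $f_{23|1}$ is holomorphic on the image of $\H$, and it is finite (in fact equal to $1$) at the cusp $\inf$ since both $q$-expansions begin with $q^{1/8}$. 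The only other cusp of $\G_0(23)$ is the one represented by $0$, of width $v=23$; feeding $u=v=23$ and $m=0$ into \eq{degree_pole} (equivalently \eq{rough_degree_pole}) yields $\ell=-2$, so $f_{23|1}$ can have at worst a double pole at the point $x_0=\varphi_{23}(0)$ and is holomorphic everywhere else on $X_0(23)$.

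Thus $f_{23|1}$ lies in $K_X(D)$ with $X=X_0(23)$ and $D=2x_0$, and by the Riemann-Roch theorem \eq{eqn:RR},
\[
\dim K_X(2x_0)=\deg(2x_0)+1-\gen(X)+\dim\Omega_X(2x_0)=1+\dim\Omega_X(2x_0).
\]
It therefore suffices to prove $\dim\Omega_X(2x_0)=0$: then $K_X(2x_0)$ consists only of constants and $f_{23|1}$ is constant. Now $\Omega_X(2x_0)$ is the space of holomorphic differentials vanishing to order at least $2$ at $x_0$; since the canonical divisor on a genus-two curve has degree $2$, any such differential would have divisor exactly $2x_0$, i.e.\ $x_0$ would be a Weierstrass point (equivalently $2x_0$ would be a canonical divisor). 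So the lemma reduces to the claim that $x_0$ is \emph{not} a Weierstrass point of $X_0(23)$.

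To establish this I would identify the hyperelliptic involution of the genus-two curve $X_0(23)$ with the Fricke involution $w_{23}$ of \eq{fricke}. The quotient $X_0(23)/\langle w_{23}\rangle=X_0^+(23)$ has genus zero (indeed $23$ is one of the primes $p$ for which $X_0^+(p)$ is rational; alternatively, Riemann--Hurwitz applied to the six fixed points of $w_{23}$, counted by $h(-23)+h(-92)=6$, forces genus zero), and an involution of a genus-two curve with rational quotient is necessarily the unique hyperelliptic involution, whose fixed points are exactly the six Weierstrass points. Since $w_{23}$ interchanges the cusps $0$ and $\inf$ it fixes neither, so $x_0=\varphi_{23}(0)$ is not a Weierstrass point; hence $\dim\Omega_X(2x_0)=0$, $f_{23|1}$ is constant, and $S_{23|1}=\lambda_{23}\eta^3$. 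The main obstacle is precisely this geometric input---recognizing $w_{23}$ as the hyperelliptic involution and using that it moves the cusp---since the crude pole bound $\ell=-2$ by itself leaves open the possibility of a genuine double pole, and only the non-Weierstrass property of $x_0$ excludes it.
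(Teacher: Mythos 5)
Your proof is correct, and it reaches the crucial vanishing $\dim\Omega_X(2x_0)=0$ by a genuinely different route from the paper's. The two arguments agree up to the point where Riemann--Roch gives $\dim K_X(2x_0)=1+\dim\Omega_X(2x_0)$ for $x_0=\varphi_{23}(0)$. The paper then proceeds concretely: it first shows $\dim\Omega_X(x_0)=1$ (via Riemann--Roch and the fact that a genus-two curve admits no degree-one map to $\PP^1$), identifies the generator of $\Omega_X(x_0)$ with the differential attached to the weight-two cusp form $\h(\t)^2\h(23\t)^2$, and observes that this differential has only a simple zero at $x_0$ (its other zero sitting at the infinite cusp), so it cannot lie in $\Omega_X(2x_0)$. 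You instead translate $\dim\Omega_X(2x_0)\neq 0$ into the statement that $2x_0$ is a canonical divisor, i.e.\ that $x_0$ is a Weierstrass point, and rule this out by recognising the Fricke involution $w_{23}$ as the hyperelliptic involution --- which is legitimate: $X_0(23)/\langle w_{23}\rangle$ has genus zero, and a non-hyperelliptic involution of a genus-two curve descends to the rational quotient by the (central) hyperelliptic involution and so has at most four fixed points, forcing its own quotient to have positive genus --- and then noting that $w_{23}$ exchanges the cusps $0$ and $\inf$, hence fixes neither. Your route is more geometric and avoids computing the divisor of a particular eta-product, at the price of importing the rationality of $X_0^+(23)$ (or, equivalently, the fixed-point count $h(-23)+h(-92)=6$); the paper's route is more elementary and self-contained, which is natural given that eta-products are already the basic currency of the argument. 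Either way one lands on $K_X(2x_0)=\C$, hence $f_{23|1}$ is constant and $S_{23|1}=\lambda_{23}\h^3$.
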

\begin{proof}

The only cusp of $\G_0(23)$ with width larger than 8 is the one with width 23 represented by $0$, and hence the function $f_{23|1}$ can possibly have a pole only at $x \in X$ for $x$ the image of $0$ under $\varphi_{23}$ (cf. (\ref{defn:xtduhp})). 
More specifically, from \eq{rough_degree_pole} we see that $f_{23|1}$ is in the vector space $K_X(D)$ for $D=2x$. 
Since the genus of $X_{0}(23)$ is 2, according to the Riemann-Roch formula \eq{eqn:RR} we have $\dim K_X(D)=1+\dim\Omega_X(D)$. 
We would like to prove the claim by showing that $\dim\Omega_X(D)=0$ and hence $\dim K_X(D)=1$, and therefore  a function in $X(D)$ must be a constant function. 

To show there is no holomorphic differential with a double zero at $x$, let us consider first the divisor $D'=x$. We know that $K_{D'}(X)\cong \C$, or else there would be a meromorphic function on $X$ with a simple pole at $x$ and no other poles, hence an isomorphism between $X$ and ${\mathbb P}^1$, which is impossible given that the genus of $X$ is 2. From this we conclude from the Riemann-Roch formula that $\dim\Omega_X(D')= \dim K_{D'}(X) = 1$ and there is a one-dimensional space of holomorphic differentials $\o$ with $(\o) - x \geq 0$. 
Now such a holomorphic differential is given by the weight 2 cusp form $\h^2(\t)\h^2(23\t)$, which is a holomorphic differential with a simple zero at $x$ and a simple zero at the image of the infinite cusp under the canonical map $\H^*\to X_0(23)$ and no other zeros. 
From this we conclude $\dim\Omega_X(D)=0$ and  hence $\dim K_X(D)=1$. This completes the proof. 
\end{proof}

\begin{lem}\label{lem:n=21}
Suppose that $n=21$ and $h=3$. Then $S_{n|h}=0$.
\end{lem}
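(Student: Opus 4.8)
The plan is to mirror the Riemann--Roch argument of Lemma \ref{lem:Spropeta23}, now on the modular curve $X=X_0(nh)=X_0(63)$, whose genus is $5$. First I would record the cusps of $\G_0(21)$ together with their widths from Table \ref{Cusps of G}: besides $\inf$ (width $1$) these are $0$ (width $21$), $\tfrac13$ (width $7$) and $\tfrac17$ (width $3$), so $0$ is the only cusp of width exceeding $8$. Next I would feed the data into \eq{degree_pole}, using $m=\tfrac{q^2v}{n}(1+qv)\bmod h$: one finds $\ell=0$ at $\tfrac13$ and $\ell=1$ at $\tfrac17$, so $F_{21|3}$ is holomorphic there and at $\inf$, while at $0$ one has $q=1$, $v=21$, width $u=63$ in $\G_0(63)$ (the cusp $0$ is totally ramified in the degree-three cover $X_0(63)\to X_0(21)$, hence a single point $x=\varphi_{63}(0)$), $m=1$, and therefore $\ell=3\{\!\{\tfrac13+\tfrac{21}{8}\}\!\}-\tfrac{63}{8}=-5$. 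Thus $f_{21|3}\in K_X(5x)$, with all other poles excluded.

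At this point I would exploit the coarse structure of the expansion \eq{degree_pole}: since $u/v=3$, the exponents at $x$ lie in $-5+3\Z$, so the only possible pole orders are $5$, $2$, or none. A pole of order $2$ at the single point $x$ would make $f_{21|3}$ a degree-two map $X_0(63)\to\PP^1$, forcing $X_0(63)$ to be hyperelliptic; since $63$ is absent from Ogg's list of hyperelliptic levels this case is impossible, so either the pole order is $5$ or $f_{21|3}$ is holomorphic. In the holomorphic case $f_{21|3}$ is constant on the compact surface $X_0(63)$, and because $\rho=\rho_{21|3}$ is a nontrivial (order-three) character the constant must vanish, giving $S_{21|3}=0$.

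It remains to rule out a genuine pole of order $5$, and this is the step I expect to be the main obstacle. Writing $D=5x$, the Riemann--Roch formula \eq{eqn:RR} gives $\dim K_X(D)=1+\dim\Omega_X(D)$, so it suffices to show $\dim\Omega_X(5x)=0$, i.e. that no holomorphic differential on $X_0(63)$ vanishes to order $5$ at the cusp $x$; equivalently, after applying the Fricke involution \eq{fricke} to move $x$ to $\inf$, that no weight-two cusp form in the five-dimensional space $S_2(\G_0(63))$ has $q$-expansion beginning beyond order $5$. As in Lemma \ref{lem:Spropeta23} I would establish this by producing an adapted family of holomorphic differentials realizing the vanishing orders $0,1,2,3,4$ at $x$ (for instance from weight-two eta-quotients on $\G_0(63)$ together with the new/old decomposition of $S_2(\G_0(63))$), so that $\dim\Omega_X(kx)$ strictly drops at each of $k=1,\dots,5$ and $\dim\Omega_X(5x)=g-5=0$. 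The genuine difficulty here is that $x$ is a cusp and may be a Weierstrass point, so the vanishing orders cannot be read off from the genus alone and require the explicit cusp-form data.

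A cleaner alternative, which I would pursue in parallel, uses the character directly. Since $f_{21|3}$ is a $\zeta$-eigenfunction ($\zeta^3=1$) for the cyclic Galois cover $X_0(63)\to X_0(21)$, whose base $X_0(21)$ has genus one, its $\zeta$-isotypic part corresponds to sections of a line bundle $\mathcal L$ on $X_0(21)$; an Euler-characteristic count gives $\deg\mathcal L=-2$, and by Riemann--Hurwitz the cover is totally ramified precisely at $x$, at the image of $\tfrac17$, and at the two order-three elliptic points of $\G_0(21)$ (accounting for $\sum(e_P-1)=8$). Bounding the induced pole of the section at $\bar x=\varphi_{21}(0)$ by $2$ then reduces the vanishing of $f_{21|3}$ to a genus-one line-bundle computation on $X_0(21)$, where a degree-zero twist has at most a one-dimensional space of sections. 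The delicate point in either route is the same: pinning down the order of vanishing at the ramified cusp $x$, which on the genus-five curve $X_0(63)$ is not a purely topological matter and forces one to invoke the explicit modular (or line-bundle) data.
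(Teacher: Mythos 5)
Your proposal is correct and follows essentially the same route as the paper: the only possible pole is at the cusp $0$, the bound $\ell=-5$ from \eq{degree_pole} places $f_{21|3}$ in $K_X(5x)$ on the genus-$5$ curve $X_0(63)$, Riemann--Roch reduces the claim to $\dim\Omega_X(5x)=0$, which is checked by moving $x$ to $i\inf$ via the Fricke involution and inspecting $S_2(\G_0(63))$ for forms that are ${\cal O}(q^6)$, and the nontriviality of $\rho_{21|3}$ forces the resulting constant to vanish. Your extra observations (ruling out a pole of order $2$ via non-hyperellipticity, and the alternative isotypic line-bundle computation on $X_0(21)$) are sound but not needed, as the Riemann--Roch step already handles all cases at once.
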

\begin{proof}
For $n=21$, by direct calculation one can check that the only cusp for which \eq{nleq8condition1} does not hold is the cusp represented by $0$. 
It corresponds to a cusp with width $63$ of $\G_0(63)$ and there are no other cusps of $\G_0(63)$ at which $f_{21|3}$ can have a pole. 
Moreover, from \eq{degree_pole}
for the zero cusp, with 
\[
\tfrac{63}{21} \{\!\{\tfrac{21}{8}+\tfrac{1}{3}\}\!\} - \tfrac{63}{8}=-5 \;
\]
equation \eq{degree_pole} shows that $f_{n|h}$ is in $K_D(X=X_0(63))$ with $D=5x$, $x= \varphi_{63}(0)$. 

The genus of $X_0(63)$ is $5$ so according to the Riemann--Roch theorem (cf. (\ref{eqn:RR})) we have $\dim K_X(D)=\dim\Omega_X(D)+1$, where $\Omega_X(D)$ denotes the space of holomorphic differentials on $X$ vanishing to order at least $5$ at $x$. We claim that $\dim \Omega_X(D)=0$. To show this, suppose $g\in S_2(\Gamma_0(63))$ corresponds to an element of $\Omega_X$ with a zero of order $5$ at $x$. Since the Fricke involution $w_{63}$
normalises $\G_0(63)$, it induces a linear automorphism of $S_2(\G_0(63))$. Then $g|_{1,2}\s$ is an element of $S_2(\G_0(63))$ that is ${\cal O}(q^6)$ as $\t\to i\inf$. By inspection there is no such cusp form of weight $2$ for $\G_0(63)$. We conclude that $\dim K_X(D)=1$. Since $K_X(D)$ includes constant functions, our $f_{n|h}$ is constant, and the desired result follows.

\end{proof}

\begin{lem}
Suppose that $n=h=12$. Then $S_{n|h}=0$.
\end{lem}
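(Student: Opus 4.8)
The plan is to follow the template of Lemmas~\ref{lem:n=21} and~\ref{lem:Spropeta23}, working on the modular curve $X=X_0(nh)=X_0(144)$ and combining Riemann--Roch with the Fricke involution. As in the earlier cases, it suffices to show that $F_{12|12}=S_{12|12}/\eta^3$, which is a modular function for $\G_0(144)$ transforming by $\rho_{12|12}$ under $\G_0(12)$, is constant: since $S_{12|12}$ and $\eta^3$ have different automorphy under $\G_0(12)$ for $h>1$, a constant value forces $S_{12|12}\equiv 0$, which is the assertion of \eq{S_coincidence}.

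First I would locate the cusps of $\G_0(12)$ at which the pole-free condition \eq{nleq8condition1} can fail. From Table~\ref{Cusps of G} the non-infinite cusps are $0,\tfrac12,\tfrac13,\tfrac14,\tfrac16$, with widths $v=12,3,4,3,1$, and the associated multipliers $m=\tfrac{q^2v}{n}(1+qv)\bmod h$ are $m=1,7,3,4,9$ respectively. Checking $\{\!\{\tfrac{m}{h}+\tfrac{v}{8}\}\!\}\ge\tfrac{v}{8}$ directly shows the condition holds at $\tfrac12,\tfrac13,\tfrac14,\tfrac16$ and fails only at the cusp $0$, where $\{\!\{\tfrac{1}{12}+\tfrac{12}{8}\}\!\}=\tfrac{7}{12}<\tfrac{12}{8}$. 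Hence $f_{12|12}$ can have a pole only at $x=\varphi_{144}(0)$.

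Next I would pin down the pole order from \eq{degree_pole}. The cusp $0$ has width $v=12$ in $\G_0(12)$ and width $u=144$ in $\G_0(144)$, so with $m=1$ one obtains $\ell=\tfrac{u}{v}\{\!\{\tfrac{m}{h}+\tfrac{v}{8}\}\!\}-\tfrac{u}{8}=12\cdot\tfrac{7}{12}-18=-11$, whence $f_{12|12}\in K_X(D)$ with $D=11x$. Since $144=2^4\cdot 3^2$ gives $[\SL_2(\Z):\G_0(144)]=288$ with no elliptic points and $24$ cusps, we have $\gen(X)=1+\tfrac{288}{12}-\tfrac{24}{2}=13$. Applying Riemann--Roch \eq{eqn:RR} with $\deg D=11$ and $\gen(X)=13$ gives $\dim K_X(D)=\dim\Omega_X(D)-1$. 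As constants always lie in $K_X(D)$, we automatically have $\dim\Omega_X(D)\ge 2$, so the desired conclusion $f_{12|12}=\text{const}$ is equivalent to the reverse bound $\dim\Omega_X(D)\le 2$. Transporting by the Fricke involution $w_{144}$ (cf. \eq{fricke}), which normalises $\G_0(144)$ and interchanges the cusps $0$ and $\infty$, identifies $\Omega_X(D)$ with the space of weight-$2$ cusp forms $g$ on $\G_0(144)$ that are ${\cal O}(q^{12})$ at $\infty$ (a differential vanishing to order $\ge 11$ at $x$ corresponds under $g\mapsto g|_{1,2}w_{144}$ to a $q$-expansion beginning at $q^{12}$).

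The hard part will be precisely this last step. Unlike the cases $n=21,23$, where it sufficed to observe the \emph{absence} of a suitably vanishing cusp form so that $\dim\Omega_X=0$, here I must instead establish the \emph{presence} of enough forms: equivalently, that the thirteen leading exponents at $\infty$ of a reduced basis of $S_2(\G_0(144))$ are $1,2,\dots,11$ together with exactly two values $\ge 12$, so that the Weierstrass gap sequence at $x$ contains $\{1,\dots,11\}$ and $\dim\Omega_X(D)=2$. I would settle this by inspecting a basis of $S_2(\G_0(144))$ in echelon form---assembling the oldform contributions from levels $12,24,36,48,72$ (and their scalings) with the level-$144$ newforms and reading off their orders of vanishing at $\infty$---to confirm that only a two-dimensional subspace is ${\cal O}(q^{12})$. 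Granting this, $\dim K_X(D)=1$, so $f_{12|12}$ is constant and $S_{12|12}=0$.
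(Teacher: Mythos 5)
Your setup is sound and matches the paper's: you correctly isolate the zero cusp as the only place where the pole-free condition \eq{nleq8condition1} can fail, correctly compute the pole order $-\ell=11$ from \eq{degree_pole} with $(v,u,m)=(12,144,1)$, correctly get $\gen(X_0(144))=13$, and correctly translate $\Omega_X(11x)$ into weight-$2$ cusp forms on $\G_0(144)$ that are ${\cal O}(q^{12})$ at $\inf$ via the Fricke involution. The problem is the endgame. You reduce everything to the bound $\dim\Omega_X(D)\le 2$, i.e.\ $\dim K_X(11x)=1$, and propose to verify it by inspecting an echelon basis of $S_2(\G_0(144))$. But that inspection comes back with the opposite answer: the space of weight-$2$ cusp forms on $\G_0(144)$ that are ${\cal O}(q^{12})$ is \emph{three}-dimensional (this is what the paper records), so $\dim K_X(11x)=2$ and $K_X(11x)$ does contain a non-constant function. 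Your claimed ``equivalence'' between $f_{12|12}$ being constant and $\dim\Omega_X(D)\le 2$ is also only an implication in one direction, and it is the direction you cannot use: constancy of $f_{12|12}$ does not force $K_X(D)=\C$, and indeed it doesn't hold here. As written, your argument halts at this step.

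The missing idea is to exploit the spacing of the exponents in \eq{degree_pole}: since $u/v=12$, the expansion of $F_{12|12}^{(\k=0)}$ runs over $q^{-11+12k}$, so if $f_{12|12}$ has a pole at $x$ at all, its order is \emph{exactly} $11$, not merely at most $11$. It therefore suffices to show that no function in $K_X(11x)$ has a pole of exact order $11$, i.e.\ that $K_X(11x)=K_X(10x)$. This is what the paper does: Riemann--Roch gives $\dim K_X(11x)=\dim\Omega_X(11x)-1=3-1=2$ and $\dim K_X(10x)=\dim\Omega_X(10x)-2=4-2=2$, and since $K_X(10x)\subseteq K_X(11x)$ with equal dimensions the two spaces coincide. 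Hence the leading coefficient $C_{n|h}(0)$ in \eq{degree_pole} vanishes, $f_{12|12}$ has no poles, and $S_{12|12}=0$. If you want to keep your plan, you would need to replace the target $\dim\Omega_X(11x)\le 2$ by the pair of counts $\dim\Omega_X(11x)=3$ and $\dim\Omega_X(10x)=4$ and add the exact-order-$11$ observation; without it the proof does not close.
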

\begin{proof}
The group $\G_0(nh)=\G_0(144)$ has $9$ cusps with widths exceeding $8$: there are four with of width $9$, having representatives $1/4$, $3/4$, $1/8$ and $1/16$, there are three of width $16$, having representatives $1/3$, $2/3$ and $1/9$, and there is one of width $36$, with representative $1/2$, and one of width $144$, with representative $0$. The four cusps of width $9$ fuse under the action of $\G_0(n)=\G_0(12)$, as do the three cusps of width $16$.

First we would like to see at which of these cusps of $\G_0(144)$ the function $f_{n|h}$ could possibly have a pole. From the above data, we only need to look at the cusps of $\G_0(12)$. From the formula \eq{formula_for_m} we see that $f_{n|h}$ is either constant or has a pole at the image of the zero cusp, as the pole-free condition \eq{nleq8condition1} is satisfied at the all other cusps of $\G_0(12)$. 
 Moreover, putting $v=h=12$, $m=1$, $u=144$ in the equation \eq{degree_pole}, we see that $f_{n|h}$ can only have a pole of order 11 if it is not a constant.

To show that a pole of order 11 cannot happen, first we will compute $\dim K_X(D) =2$ for $X=X_0(144)$ and $D=11x$ with $x= \varphi_{144}(0)$. 
The genus of $X_0(144)$ is $13$ so according to the Riemann--Roch theorem (cf. (\ref{eqn:RR})) we have $\dim K_X(D)=\dim\Omega_X(D)-1$ where $\Omega_X(D)$ denotes the space of holomorphic differentials on $X$ vanishing to order at least $11$ at $x$. As in the proof of Lemma \ref{lem:n=21} we observe that elements of $\Omega_X(D)$ are in correspondence with cusp forms of weight $2$ on $\G_0(144)$ that are ${\cal O}(q^{12})$ as $\t \to i \inf$.  By inspection there is a three dimensional space of such cusp forms so $\dim K_X(D)=2$. 

Next set $D'=10x$. We claim that then $K_X(D)=K_X(D')$. 
For certainly $K_X(D')$ is a subspace of $K_X(D)$ and their dimensions coincide since the Riemann--Roch theorem implies $\dim K_X(D')=\dim\Omega_X(D')-2$ and $\dim\Omega_X(D')=4$ by inspection of $S_2(\G_0(144))$. 
We conclude that if $f_{n|h}$ is in $K_X(D)$ then it is also in $K_X(D')$ and then we must have $C_{n|h}(k=0)=0$ in \eq{degree_pole}, so that $f_{n|h}$ has no poles at any cusp and is therefore constant. This completes the proof.
\end{proof}

\begin{rmk}
 It is fascinating that the space $S_{3/2,\e^3}(\G_0(n))$ of cusp forms of weight $3/2$ on $\G_0(n)$ with multiplier system coinciding with that of $\eta^3$ is one-dimensional and spanned by $\eta^3$, whenever $n$ is the order of an element of $M_{23}$. This is demonstrated by the above Lemmas taken together, while the same is certainly not true for a generic positive integer $n$. 

For $n=9$ and $n=10$, for instance, it's possible that $F_{n|1}=S_{n|1}/\eta^3$ could define a degree one map $X_0(n)\to \PP^1$, but this is no contradiction since $X_0(n)$ has genus zero for $n=9$ and $n=10$. Indeed, numerical approximations suggest that $S_{9|1}(\t)=\eta(\t)^3+3\eta(9\t)^3$ and 
$$
S_{10|1}(\t)=C\left(2\eta(\t)^3+7\frac{\eta(2\t)\eta(10\t)^3}{\eta(5\t)}\right)
$$
for some $C\in\C$. Observe that the functions $\eta(\t)^3/\eta(9\t)^3$ and $\eta(\t)^3\eta(5\t)/\eta(2\t)\eta(10\t)^3$ are hauptmoduln for $\G_0(9)$ and $\G_0(10)$, respectively, and so $S_{3/2,\e^3}(\G_0(n))$ is not one dimensional for $n=9$ or $n=10$. Indeed, there are no $M_{24}$ classes corresponding to $(n,h)=(9,1)$ or $(10,1)$.

Also in the case that $h>1$, it is extremely non-trivial that for all the pairs $(n,h)$ arising from $M_{24}$, just the correct combinations of genus, cusp widths and multipliers conspire to force the vanishing of $S_{n|h}$.

 \end{rmk}

\subsection{Determining $R$}
\label{RRR}

First we will establish the identity $H_g = -2R_{n|1}$, for $n=n_g$ and for those $g$ with $h_g=1$. Recall from the discussion in \S \ref{M24facts} that these are the $M_{24}$ classes whose action on the natural permutation representation has at least one fixed point (the $M_{23}$ classes). 
To do this, we consider the function 
\be\label{def_G}
G_{n|1} = \h^3 \Big( \chi(g) \,R_{n|1} - \l_{n} H_g\Big)
\ee 
with $g$ being an $M_{24}$ class with $n_g|h_g=n|1$. By construction, it has $G_{n|1} =( \chi(g) + 2 \l_n) +{\cal O}(q)$ near the infinite cusp.

In \S\ref{Mock} we saw that $H_g$ is a weight $1/2$ mock modular form for $\G_0(n)$ with shadow $\chi(g)\h^3$ and the same multiplier as $\h^{-3}$.
On the other hand, in \S  \ref{sec:Var} we have proven that the Rademacher sums $R_{n|1}$ are also weight $1/2$ mock modular forms for $\G_0(n)$ with the same multiplier. Moreover, it has as its shadow $S_{n|1}$, which was proven to be given by $\l_n \h^3$ for some $\l_n \in \C$ in the previous subsection. Taken together, we see that the function $G_{n|1}: \H^* \to \PP^1$ is a weight 2 modular form of $\G_0(n)$ with trivial multiplier.

Using the properties of such weight 2 modular forms, we will be able to show that $G_{n|1}=0$ for the values of $n$ of interest to us. 
In particular, we have $\chi(g) + 2 \l_n=0$ as the constant term in the expansion near the infinite cusp. Since $\chi(g)\neq 0 $ when $h_g=1$, this implies  
$H_g = -2R_{n|1}$ for  $g \in M_{24}$ with $h_g=1$.

\begin{lem}\label{Id_R_1A}
Let $g$ be the identity element of $M_{24}$ and set $n=n_g=1$ and $h=h_g=1$. Then $H=H_g=-2R_{n|h}$.
\end{lem}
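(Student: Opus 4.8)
The plan is to specialise the general argument for the $h_g=1$ classes (described immediately above this lemma) to the simplest possible case, $n=n_g=1$, where $\G_0(1)=\SL_2(\Z)$ has a single cusp and admits no nonzero holomorphic modular form of weight two. First I would assemble the automorphy data for the two protagonists. By Proposition \ref{prop:Rismock}, the Rademacher sum $R_{1|1}=R_{\G,\rho}$ with $\G=\SL_2(\Z)$ and $\rho=\rho_{1|1}$ trivial is a weight $1/2$ mock modular form on $\SL_2(\Z)$ with multiplier $\psi=\e^{\mathsmaller{-3}}$ and shadow $S_{1|1}$; and by the Corollary following Lemma \ref{lem:Spropeta1to8} (the $n\leq 8$, $h=1$ case, of which $n=1$ is an instance) we have $S_{1|1}=\l_1\h^3$ for some $\l_1\in\C$. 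On the other side, $H=H_{1A}$ is, as recalled in \S\ref{Mock}, a weight $1/2$ mock modular form on $\SL_2(\Z)$ with the same multiplier $\e^{\mathsmaller{-3}}$ and with shadow $\chi(1A)\h^3=24\,\h^3$.

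Next I would form the comparison function $G_{1|1}=\h^3\big(\chi(g)R_{1|1}-\l_1 H\big)$ as in \eqref{def_G}, here with $\chi(g)=\chi(1A)=24$. Because $\chi(g)R_{1|1}$ and $\l_1 H$ have identical shadows, namely $24\,\l_1\h^3$, the bracketed combination $\chi(g)R_{1|1}-\l_1 H$ has vanishing shadow and is therefore a genuine (weakly holomorphic) weight $1/2$ modular form on $\SL_2(\Z)$ with multiplier $\e^{\mathsmaller{-3}}$. Multiplying by the weight $3/2$ cusp form $\h^3$, whose multiplier $\e^3$ cancels $\e^{\mathsmaller{-3}}$, turns $G_{1|1}$ into a weight two modular form on $\SL_2(\Z)$ with trivial multiplier. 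To see it is in fact holomorphic at the unique cusp I would combine the expansions $R_{1|1}=q^{\mathsmaller{-1/8}}+\mathcal{O}(q^{\mathsmaller{7/8}})$ (from Lemma \ref{lem:Zconvmodcconv} and \eqref{eqn:F_Gamma}) and $H=-2q^{\mathsmaller{-1/8}}+\mathcal{O}(q^{\mathsmaller{7/8}})$ (from \eqref{H_qexp}) with $\h^3=q^{\mathsmaller{1/8}}(1+\mathcal{O}(q))$, which gives $G_{1|1}=(\chi(g)+2\l_1)+\mathcal{O}(q)$.

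Finally I would invoke the fact that $M_2(\SL_2(\Z))=0$, i.e. there are no nonzero holomorphic modular forms of weight two on the full modular group, to conclude $G_{1|1}\equiv 0$. Reading off the constant term then forces $\chi(g)+2\l_1=24+2\l_1=0$, hence $\l_1=-12$; and since $\h^3$ is nowhere vanishing on $\H$, the identity $G_{1|1}=0$ yields $24\,R_{1|1}=\l_1 H=-12\,H$, that is, $H=-2R_{1|1}$, as claimed.

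This case is essentially obstruction-free: the only points requiring care are verifying that the two shadows cancel exactly, so that the bracket is genuinely a modular (not merely mock modular) form, and that the product with $\h^3$ is holomorphic rather than only meromorphic at $i\inf$. The genuine difficulties of the full theorem appear only for the other classes, where $M_2(\G_0(n))\neq 0$ and one must instead control poles using cusp widths, the multiplier phases $\ex(m/h)$, and the Riemann--Roch estimates developed in \S\ref{SSS}.
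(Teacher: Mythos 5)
Your proposal is correct and follows essentially the same route as the paper: the paper's proof forms the same weight-two form $G_{1|1}=\eta^3(24R_{1|1}-\lambda_1 H)$ (relying on the discussion around \eqref{def_G} for its modularity and its constant term $\chi(g)+2\lambda_1$) and likewise concludes from $M_2(\SL_2(\Z))=0$ that it vanishes, forcing $\lambda_1=-12$ and $H=-2R_{1|1}$. You have merely written out explicitly the shadow-cancellation and cusp-expansion details that the paper delegates to the preceding paragraphs.
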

\begin{proof}
From the above discussion we know the function 
$$
G_{n|1}=\eta^3\,(24R_{n|1}-\l_{n}H_g)
$$ 
is a modular form of weight $2$ for $\SL_2(\Z)$ with trivial multiplier. There are no non-zero modular forms of weight $2$ for $\SL_2(\Z)$ so $G_{n|1}$ vanishes identically and by the above argument this proves the lemma.
\end{proof}
 
 To prove $G_{n|1}=0$ for $n>1$ we need to study its behaviour near the other cusp representatives of $\G_0(n)$. 
For a cusp $\k=\s \inf $, $\s\in \SL_2(\Z)$, with width $v$ in $\G_0(n)$, we shall again consider its transformation under a scaling element $\til\s$ (cf. \eq{scaling})
\[
G_{n|1}^{(\k)} (\t) =  G_{n|1}\big\lvert_{1,2} \s\,(v( \t+\b) \hspace{.02mm}) \;. 
\]
To be more precise, we would like to know the form of the Fourier expansion of $G_{n|1}^{(\k)} (\t) $. As will be shown explicitly in Appendix \ref{The Rademacher Sums at Other Cusps}, we have 
\[
G_{n|1}^{(\k)} (\t) = {\cal O}(q^{v/8+\D})
\]
as $\t\to  i\inf$ with $\D\geq 0$, which corresponds to the fact that both $R_{n|1}$ and $H_g$ have no pole at any cusp other than  the infinite cusp. 
By construction, $G_{n|1}^{(\k)} $ is invariant under $\t \to \t+1$ and hence the above argument shows 
\be\label{expansion_G}
G_{n|1}^{(\k)} (\t) = {\cal O}(q^{\lceil v/8\rceil})\;,
\ee
where the ceiling function $\lceil x \rceil$ gives the smallest integer not less than $x\in \R$.

\begin{lem}\label{lem:h1np}
Suppose that $g\in M_{24}$ has prime order $n=n_g$ and $h=h_g=1$. Then $H_g=-2R_{n|h}$.
\end{lem}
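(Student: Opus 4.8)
The plan is to show that $G_{n|1}\equiv 0$, since, as explained in the discussion preceding the lemma, this forces $\chi(g)+2\lambda_n=0$ upon reading off the constant term at $\infty$, and hence gives $H_g=-2R_{n|1}$. Throughout I would use that $n$ is prime, so that $\G_0(n)$ has exactly two cusps: the one at $\infty$ of width $1$, and the one represented by $0$ of width $n$. By the discussion culminating in \eq{expansion_G}, $G_{n|1}$ is a holomorphic weight-$2$ form on $\G_0(n)$ with trivial multiplier whose expansion at the cusp $0$ is ${\cal O}(q^{\lceil n/8\rceil})$, while its constant term at $\infty$ equals $\chi(g)+2\lambda_n$.

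First I would reduce to the cuspidal case. Viewing $G_{n|1}$ as the meromorphic differential $\omega=G_{n|1}(\t)\,d\t$ on the compact surface $X_0(n)$, the only possible poles are simple poles at the two cusps, with residues proportional to the respective constant terms. Since $G_{n|1}$ vanishes at the cusp $0$, the differential $\omega$ is holomorphic (indeed vanishing) at $x_0:=\varphi_n(0)$; as the residues of a meromorphic differential on a compact Riemann surface sum to zero, the residue at $\varphi_n(\infty)$ must vanish as well. This yields $\chi(g)+2\lambda_n=0$ and shows that $\omega$ is a holomorphic differential, i.e. $G_{n|1}\in S_2(\G_0(n))$; moreover $\omega$ vanishes to order at least $\lceil n/8\rceil-1$ at $x_0$, since the order of vanishing of the differential is one less than the $q$-order of the form.

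Next I would run a genus dichotomy. For $n\in\{2,3,5,7\}$ the curve $X_0(n)$ has genus $0$, so $S_2(\G_0(n))=0$ and $G_{n|1}=0$ immediately. For $n=11$, $X_0(n)$ has genus $1$, whence a nonzero holomorphic differential has empty zero divisor; but $\omega$ vanishes to order at least $\lceil 11/8\rceil-1=1$ at $x_0$, forcing $\omega=0$. For $n=23$, $X_0(n)$ has genus $2$ and $\omega$ would vanish to order at least $\lceil 23/8\rceil-1=2$ at $x_0$, i.e. lie in $\Omega_X(2x_0)$; here I would invoke exactly the computation of Lemma \ref{lem:Spropeta23}, where the eta-product $\eta^2(\t)\eta^2(23\t)$ is exhibited as a holomorphic differential whose divisor is $x_0+\varphi_{23}(\infty)$, so that the canonical class is represented by $x_0+\varphi_{23}(\infty)$ and not by $2x_0$ (otherwise the two distinct cusps would be linearly equivalent, impossible on a curve of positive genus), whence $\Omega_X(2x_0)=0$. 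Thus $G_{n|1}=0$ in every case, and the lemma follows.

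The main obstacle is the genus-$2$ case $n=23$. The naive degree count is exactly borderline there: the forced vanishing order $2$ at $x_0$ coincides with the degree $2g-2=2$ of a canonical divisor, so the order of vanishing alone does not conclude. The resolution requires the finer geometric input that the canonical class of $X_0(23)$ is represented by a differential whose two zeros are split between the two cusps rather than doubled at $x_0$; this is precisely what the eta-product supplies, and it is the same mechanism already used to pin down $S_{23|1}$.
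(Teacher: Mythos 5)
Your proof is correct, but it takes a genuinely different route from the paper's. The paper works entirely inside the space $M_2(\G_0(n))$: for prime $n$ this space is spanned by the Eisenstein series $\L_n$ of \eq{eqn:psin} together with a basis of $S_2(\G_0(n))$ ($\eta(\t)^2\eta(11\t)^2$ for $n=11$; $\f_{23,1}$ and $\f_{23,2}$ for $n=23$), and it kills $G_{n|1}$ by transporting it to the zero cusp via the Fricke involution $w_n$ (which normalises $\G_0(n)$, so $G_{n|1}^{(\k=0)}$ again lies in $M_2(\G_0(n))$) and checking by inspection of Fourier expansions that no non-zero linear combination of these basis elements is ${\cal O}(q^{\lceil n/8\rceil})$. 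You instead pass to the differential $G_{n|1}\,{\rm d}\t$ on $X_0(n)$: the residue theorem converts the vanishing of the constant term at the zero cusp into the vanishing of the constant term at infinity, so you obtain $\chi(g)+2\l_n=0$ and $G_{n|1}\in S_2(\G_0(n))$ before knowing $G_{n|1}=0$, and cuspidality is then excluded by degree considerations on the canonical class --- trivially in genus $0$, by $\deg K=0$ for $n=11$, and for $n=23$ by the observation that the canonical class is represented by the divisor $x_0+x_\inf$ of $\eta(\t)^2\eta(23\t)^2$, so that $2x_0$ cannot also be canonical without forcing $x_0\sim x_\inf$ and hence genus zero. Your route is more geometric and dispenses with the explicit forms $\L_n$ and $\f_{23,2}$ altogether, at the cost of relying on the divisor computation for $\eta(\t)^2\eta(23\t)^2$, which the paper in any case carries out in the proof of Lemma \ref{lem:Spropeta23}; the paper's route is more computational but runs in parallel with the arguments it needs for composite $n$ in Lemmas \ref{lem:h1npq} and \ref{lem:h1n48}, where several cusps contribute Eisenstein components that must be tracked one by one.
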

\begin{proof}
The primes dividing the order of $M_{24}$ are exactly those primes $p$ for which $(p+1)|24$ so $n\in\{2,3,5,7,11,23\}$. 
For these $n$, the only non-infinite cusp of $\G_0(n)$ is the one represented by 0, with width $n$. 
Take the Fricke involution $w_n$ to be the scaling element.  
From \eq{expansion_G} we conclude that $G_{n|1}^{(\k=0)}$ is ${\cal O}(q)$ for $n\in\{2,3,5,7\}$ and ${\cal O}(q^2)$ for $n=11$ and ${\cal O}(q^3)$ for $n=23$.

Since the Fricke involution $w_n$ for $\G_0(n)$ normalises $\G_0(n)$,  $G_{n|1}^{(\k=0)}$  also  belongs to $M_2(\G_0(n))$. 
The space $M_2(\G_0(n))$ of such weight 2 modular forms contains the space $S_2(\G_0(n))$ of  weight 2 cusp forms. The dimension of the latter is given by  the genus of the modular curve $X_0(n)$. A complement in the former to the latter is spanned by $\{\L_m,\,  m>1,\,m|n\}$ with
\begin{gather}\label{eqn:psin}
\begin{split}
\L_m(\t)&=m \, q \frac{d}{{d}q}\log(\eta(n\t)/\eta(\t))=
\frac{m}{24} \Big( m \,E_2(m\t) - E_2(\t) \Big)\\ 	
&=\frac{m(m-1)}{24}+\sum_{k= 1}^\inf m \s(k)(mq^{mk}-q^k)
\end{split}
\end{gather}
where  $\s(k)$ is the sum of the divisors of $k$ and $E_2$ is the holomorphic quasi-modular Eisenstein series of weight 2. 

When  $n$ is prime, obviously the complement in $M_2(\G_0(n))$ to  $S_2(\G_0(n))$ is one-dimensional and spanned by $\L_n$.
Since the genus of $X_0(n)$ is zero for $n\in\{2,3,5,7\}$, the space $M_2(\G_0(n))$ is one-dimensional and spanned by $\L_n$.
Now, with $\L_n$ having non-vanishing constant term, the statement $G_{n|1}^{(\k=0)}(\t)={\cal O}(q)$ implies that $G_{n|1}^{(\k=0)}(\t)$ vanishes identically. 

If $n=11$ then $M_2(\G_0(n))$ is two dimensional, spanned by $\L_n$ and the cusp form $\eta_g(\t)=\eta(\t)^2\eta(11\t)^2$, and since $\L_n$ and $\eta_g$ have non-vanishing coefficients of $q^0$ and $q^1$, respectively, we conclude that $G_{n|1}^{(\k=0)}(\t)={\cal O}(q^2)$ implies the vanishing of $G_{n|1}^{(\k=0)}$ in this case also. When $n=23$, the space $M_2(\G_0(n))$ is spanned by $\L_n$, a cusp form $\f_{23,1}=\eta_g(\t)^2=\eta(\t)^2\eta(23\t)^2$ and a further cusp form 
\be\label{phi232}
	\phi_{23,2}(\t)=
	\frac{\eta(\tau)^3\eta(23\tau)^3}{\eta(2\tau)\eta(46\tau)}
	+4\eta(\tau)\eta(2\tau)\eta(23\tau)\eta(46\tau)
	+4\eta(2\tau)^2\eta(46\tau)^2.
\ee
Inspecting the Fourier expansions of $\L_n$, $\f_{23,1}$ and $\f_{23,2}$ we see that there is no non-zero linear combination that is ${\cal O}(q^3)$. We conclude that $G_{n|1}^{(\k=0)}(\t)$ vanishes identically for all the values of $n$ in question. So $G_{n|1}$ also vanishes and the desired result follows.
\end{proof}

\begin{lem}\label{lem:h1npq}
Suppose that $g\in M_{24}$ has order $n=n_g$ a product of two distinct primes and $h=h_g=1$. Then $H_g=-2R_{n|h}$.
\end{lem}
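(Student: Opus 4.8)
The classes with $n=n_g$ a product of two distinct primes and $h_g=1$ are, by Table~\ref{examples_eta}, exactly $6A$, $14AB$ and $15AB$, so $n\in\{6,14,15\}$ and in every case $\chi(g)\neq 0$. As in Lemmas~\ref{Id_R_1A} and~\ref{lem:h1np}, it suffices to prove that the weight-$2$ form $G_{n|1}=\eta^3\big(\chi(g)R_{n|1}-\l_n H_g\big)$ of \eqref{def_G}, which carries the trivial multiplier on $\G_0(n)$, vanishes identically: its constant term at $\inf$ is then $\chi(g)+2\l_n=0$, and since $\chi(g)\neq 0$ this forces $H_g=-2R_{n|1}$. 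The plan is to show $G_{n|1}=0$ by accounting for its zeros on $X_0(n)$.

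First I would record the cusp data. For $n=pq$ the group $\G_0(n)$ has four cusps $\inf,0,1/p,1/q$, of widths $1,\,pq,\,q,\,p$ respectively (cf. Table~\ref{Cusps of G}), and there are no elliptic points; hence the orders of vanishing of a nonzero weight-$2$ form on $\G_0(n)$ sum, over all points of $X_0(n)$, to $\tfrac16[\SL_2(\Z):\G_0(n)]$, which is $2$ for $n=6$ and $4$ for $n=14,15$. By \eqref{expansion_G}, $G_{n|1}$ vanishes at each finite cusp $\k$ to order at least $\lceil v/8\rceil$, with $v$ the width of $\k$: so order $\geq 1$ at $0,1/2,1/3$ when $n=6$, and order $\geq 2$ at $0$ together with order $\geq 1$ at $1/p,1/q$ when $n=14,15$.

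For $n=6$ this finishes the argument at once: $G_{6|1}$ would have at least $1+1+1=3$ zeros among the finite cusps, exceeding the total of $2$ permitted for a nonzero form, so $G_{6|1}=0$. For $n=14$ and $n=15$ the finite cusps account for only $2+1+1=4$ zeros, which exactly saturates the valence bound, and one more input is needed. Since $G_{n|1}$ is holomorphic of weight $2$ with trivial multiplier, the differential $G_{n|1}(\t)\,d\t$ descends to a meromorphic $1$-form on the compact curve $X_0(n)$ whose residues are proportional to the constant terms of $G_{n|1}$ at the cusps; the residue theorem therefore forces these constant terms to sum to zero. As $G_{n|1}$ already vanishes at all finite cusps, its constant term at $\inf$ must vanish as well, so $G_{n|1}$ acquires a further zero at $\inf$. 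The total is then $\geq 2+1+1+1=5>4$, whence $G_{n|1}=0$. (Equivalently, the residue relation shows $G_{n|1}\in S_2(\G_0(n))$, which is one-dimensional and spanned by the weight-$2$ eta product $\eta_g$ of Table~\ref{examples_eta}; as $\eta_g$ vanishes only to first order at $0$ while $G_{n|1}$ vanishes there to order $\geq 2$, again $G_{n|1}=0$.)

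The hard part is precisely this borderline case $n\in\{14,15\}$, where the finite-cusp zeros saturate the valence count and the zero accounting alone is inconclusive; the residue relation among the constant terms is exactly what supplies the missing vanishing at $\inf$. Having established $G_{n|1}=0$ in all three cases, the constant-term computation gives $\chi(g)+2\l_n=0$, and since $\chi(g)\neq 0$ we conclude $H_g=-2R_{n|1}$, as claimed.
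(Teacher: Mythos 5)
Your proof is correct, but it takes a genuinely different route from the paper's. The paper writes $G_{n|1}$ explicitly as $\alpha_1\L_{e_1}+\alpha_2\L_{e_2}+\alpha_3\L_{e_3}+\f$ with $\f\in S_2(\G_0(n))$, uses the Atkin--Lehner transformation identity \eqref{old_form_id} to compute the constant terms of $G_{n|1}^{(\k_i)}$ at the three non-infinite cusps, solves the resulting linear system to force $\alpha_1=\alpha_2=\alpha_3=0$, and then kills the residual cusp form for $n=14,15$ by comparing the order of vanishing of $G^{(\k=0)}_{n|1}$ from \eqref{expansion_G} against the Fricke eigenvalue of $\eta_g$. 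You instead count zeros via the valence formula (sum of orders $=\tfrac{2}{12}[\SL_2(\Z):\G_0(n)]$, and indeed $\G_0(6)$, $\G_0(14)$, $\G_0(15)$ have no elliptic points), which disposes of $n=6$ immediately, and you supply the missing vanishing at $\inf$ in the borderline cases $n=14,15$ by the residue theorem applied to the invariant differential $G_{n|1}(\t)\,{\rm d}\t$ on $X_0(n)$. Both ingredients are sound: the residue at a cusp of width $v$ is $\tfrac{v}{2\pi i}$ times the constant term in the width-normalized expansion (so the weights $v$ enter the residue relation, though this is immaterial here since every finite-cusp constant term already vanishes by \eqref{expansion_G}). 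Your argument buys a cleaner, computation-free treatment that avoids the identity \eqref{old_form_id} and any explicit basis of $M_2(\G_0(n))$; the paper's approach is more explicit and generalises directly to the cases $n=4,8$ of Lemma \ref{lem:h1n48}, where the same $\L_e$ machinery is reused. Your parenthetical variant (residue relation shows $G_{n|1}\in S_2(\G_0(n))$, then compare orders at $0$ against the one-dimensional space spanned by $\eta_g$) is essentially the paper's final step and is equally valid.
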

\begin{proof}
According to Table \ref{examples_eta} the values of $n$ in question are $6$, $14$ and $15$. 
From the discussion before \eq{eqn:psin} we see that the complement to $S_2(\G_0(n))$ in $M_2(\G_0(n))$ is three-dimensional and spanned by the weight two modular forms $\L_{e_1}, \L_{e_2} $ and $\L_{e_3}$, where $e_1<e_2<e_3=n$ are the three (exact) divisors of $n$ that are larger than 1. 
Therefore, we have 
$$
	G_{n|1}=\alpha_1 \L_{e_1}
	+\alpha_2 \L_{e_2}
	+\alpha_3 \L_{e_3}
	+\f
$$
with $\f \in S_2(\G_0(n))$. In other words, $\f$ vanishes when $n=6$ and is a multiple of $\h_g$ when $n=14,15$, in accordance with the fact that the genera of $X_0(6)$, $X_0(14)$, $X_0(15)$ are 0, 1, 1, respectively. 

For each  $e_i$ there is a corresponding cusp of $\G_0(n)$ with width ${e_i}$, which is represented by $\k_i=\tfrac{e_i}{n}$. These are the only cusps of $\G_0(n)$ apart from the infinite cusp, as listed in Table \ref{Cusps of G}.
As discussed before (cf. \eq{expansion_G}), the corresponding function $G_{n|1}^{(\k_i)} (\t) $ has the asymptotic behaviour 
$$G_{n|1}^{(\k_i)} (\t) = {\cal O}(q^{\D+\tfrac{e_i}{8}})\;.$$

Consider a scaling element which is an  Atkin-Lehner involution  $w_{e_i}$ with $\k_i = w_{e_i} \inf$ (note that it is different from the Fricke involution when $e_i \neq n$). With help from the identity
\be\label{old_form_id}
\tfrac{1}{e}\L_e\lvert_{1,2} w_f = \tfrac{1}{e\ast f}\L_{e\ast f} -\tfrac{1}{f}\L_f 
\ee
where $e*f=ef/(e,f)^2$, we obtain three equations on $\a_i$ from the vanishing of the constant terms in $G_{n|1}^{(\k_i)} (\t)$. 
Solving the resulting linear system we quickly deduce that $\a_1=\a_2=\a_3=0$ and hence $G_{n|1}(\t)$ is given by the cusp form $\f$ of $\G_0(n)$. 

Since $X_0(6)$ has genus zero we conclude that $G_{n|1}(\t)=0$ for $n=6$. 
For $n=14,15$, to prove that $G_{n|1}(\t)=\f=0$ let us focus on the 0 cusp with width $n$.  
Take the scaling element to be the Fricke involution $w_{n}$. From the fact that the cusp form $\h_g$ is an eigenfunction (with eigenvalue $-1$) of $w_n$ we get $G^{(\k)}_{n|1}(\t)= C \h_g =- C q + {\cal O}(q^2)$ for some $C\in \C$. But  from \eq{expansion_G} we see that $G^{(\k)}_{n|1}(\t)= {\cal O}(q^2)$ for $v=n=14,15$. Hence we conclude $G_{n|1}=C=0$. \end{proof}

 \begin{lem}\label{lem:h1n48}
Suppose that $g\in M_{24}$ is such that $h=h_g=1$ and $n=n_g$ is $4$ or $8$. Then $H_g=-2R_{n|h}$.
\end{lem}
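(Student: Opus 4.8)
The plan is to reuse the strategy of Lemmas \ref{lem:h1np} and \ref{lem:h1npq} at the level of the auxiliary form, and to replace the Atkin--Lehner bookkeeping by an intrinsic argument at the cusps. Since $h_g=1$ forces $\chi(g)\neq0$, the two classes in question are $4B$ (with $n=4$) and $8A$ (with $n=8$) of Table \ref{examples_eta}. As in \eq{def_G} I set $G_{n|1}=\h^3\big(\chi(g)\,R_{n|1}-\l_{n}H_g\big)$. By the variance established in \S\ref{sec:Var} together with $S_{n|1}=\l_{n}\h^3$ from \S\ref{SSS}, the function $G_{n|1}$ is a holomorphic modular form of weight $2$ for $\G_0(n)$ with trivial multiplier, whose expansion at $i\inf$ begins with the constant $\chi(g)+2\l_{n}$. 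Crucially, both $X_0(4)$ and $X_0(8)$ have genus zero, so $M_2(\G_0(n))$ contains no cusp forms and is spanned by the Eisenstein series $\L_m$ with $m\mid n$, $m>1$ (cf. \eq{eqn:psin}).

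First I would read off the non-infinite cusps and their widths from Table \ref{Cusps of G}: for $n=4$ these are $0$ and $\tfrac12$, of widths $4$ and $1$, and for $n=8$ they are $0$, $\tfrac12$, $\tfrac14$, of widths $8$, $2$, $1$. The general expansion \eq{expansion_G} gives $G_{n|1}^{(\k)}={\cal O}(q^{\lceil v/8\rceil})$ at a cusp of width $v$, and since $\lceil v/8\rceil\geq1$ for every width appearing above, $G_{n|1}$ vanishes at all cusps other than $i\inf$.

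The key step is to deduce that a weight $2$ Eisenstein series on $\G_0(n)$ vanishing at each non-infinite cusp must vanish identically. I would phrase this via the meromorphic differential $G_{n|1}\,d\t$ on $X_0(n)\cong\PP^1$: the vanishing at the non-infinite cusps means it is regular there, so its only possible pole is a simple one at $i\inf$, with residue proportional to $\chi(g)+2\l_{n}$. As the residues of a differential on $\PP^1$ sum to zero and there is at most one pole, this residue must vanish, forcing $\chi(g)+2\l_{n}=0$; the differential is then holomorphic on $\PP^1$, hence zero, so $G_{n|1}=0$. Equivalently, the Eisenstein subspace injects into the tuple of constant terms at the non-infinite cusps (the $i\inf$ term being fixed by the sum-zero relation), and the source dimension equals the number of such cusps---$2$ for $n=4$ and $3$ for $n=8$---so this vanishing already kills $G_{n|1}$.

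Finally, with $G_{n|1}=0$ and $\chi(g)+2\l_{n}=0$, dividing by $\h^3\neq0$ gives $\chi(g)R_{n|1}=\l_{n}H_g=-\tfrac{\chi(g)}{2}H_g$, and $\chi(g)\neq0$ then yields $H_g=-2R_{n|1}$. The main obstacle, and the reason $n=4,8$ are treated apart from Lemma \ref{lem:h1npq}, is that these are prime powers: the cusps $\tfrac12$ (and $\tfrac14$) are not indexed by exact divisors and are not reachable by Atkin--Lehner involutions, so the transformation identity \eq{old_form_id} is unavailable there. I therefore avoid computing the constant terms of the $\L_m$ at those cusps explicitly, and instead rely on the intrinsic expansion \eq{expansion_G} and the residue characterization of the Eisenstein subspace to control the behaviour there.
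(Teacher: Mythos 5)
Your argument is correct, and it reaches the paper's conclusion by a genuinely different final step. You and the paper share the same skeleton: form $G_{n|1}=\h^3\big(\chi(g)R_{n|1}-\l_n H_g\big)$, observe that it lies in $M_2(\G_0(n))$ with $S_2(\G_0(n))=0$, and use \eq{expansion_G} to see that its constant term vanishes at every non-infinite cusp (all widths here being at most $8$). The paper then finishes by brute force: it expands $G_{n|1}$ in the Eisenstein basis $\{\L_m\}$, computes the constant terms at the cusps $0$, $\tfrac12$ (and $\tfrac14$) by conjugating with $w_4$, $T^{1/2}w_4$, etc., using \eq{old_form_id} together with the identity $E_2(\t+1/2)=-E_2(\t)+6E_2(2\t)-4E_2(4\t)$, and solves the resulting $2\times2$ (resp.\ $3\times3$) linear system to get $\a=\b(=\g)=0$. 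You instead apply the residue theorem to the meromorphic differential $G_{n|1}\,d\t$ on $X_0(n)\cong\PP^1$: regularity at the non-infinite cusps leaves at most a simple pole at $i\inf$, whose residue must therefore vanish, giving $\chi(g)+2\l_n=0$ and then $G_{n|1}=0$ since $\PP^1$ carries no nonzero holomorphic differentials. This is cleaner, avoids the half-integral translations entirely, and also explains \emph{why} the paper's linear systems are nonsingular: the Eisenstein subspace of $M_2$ has dimension one less than the number of cusps and is faithfully detected by the constant terms at all but one of them. The one hypothesis you use tacitly --- that only cusps can contribute residues, i.e.\ that $G_{n|1}\,d\t$ descends regularly on the interior of $X_0(n)$ --- is harmless here, since $\G_0(4)$ and $\G_0(8)$ have no elliptic points (and a holomorphic weight-two form descends to a regular differential at an elliptic point in any case). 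Your concluding step, $\l_n=-\chi(g)/2$ followed by division by the nonvanishing $\h^3$, matches the paper's general argument preceding Lemma \ref{Id_R_1A}.
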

\begin{proof}
If $n=4$ then $\dim M_2(\G_0(n))=2$ and $M_2(\G_0(n))$ is spanned by the modular forms $\L_{2}$ and $\L_{4}$ (cf. (\ref{eqn:psin})). If $n=8$ then $\dim M_2(\G_0(n))=3$ and $M_2(\G_0(n))$ is spanned by $\L_2$, $\L_4$ and $\L_8$. In both cases $\dim S_2(\G_0(n))=0$.

Consider the case that $n=4$, where we have $G_{n|1} = \a \L_2 + \b \L_4$. 
There are two non-infinite cusps: one represented by 0 with width 4 and the other represented by $1/2$ with width 1. 
Taking as their scaling elements $w_4$ and $T^{1/2} w_4$ respectively, we get two equations on the coefficients $\a,\b$ from the requirement that both 
$G_{n|1}^{(\k=0)}$ and $G_{n|1}^{(\k=1/2)}$ are $ {\cal O}(q)$, which force both $\a$ and $\b$ and hence $G_{n|1}$ to vanish. 
In arriving at these equations, we have used the identity \eq{old_form_id} and 
\[
E_2(\t+1/2)=-E_2(\t)+6E_2(2\t)-4E_2(4\t)\;. 
\]

 The argument for $n=8$ is very similar. We have $F=\a\L_2+\b\L_4+\g\L_8$ for some $\a,\b,\g\in\C$. We may take $0$, $1/2$ and $1/4$ as representatives for the three non-infinite cusps of $\G_0(8)$. We may take $w_8$, $T^{1/2}w_8$ and $w_8T^{1/2}w_8$, respectively, as scaling elements for these cusp representatives. Using the identities given above we compute expressions for the constant term at each non-infinite cusp as linear equations in $\a$, $\b$ and $\g$. From the vanishing of each constant term we deduce that $\a=\b=\g=0$ and hence $G_{n|1}=0$ also. Then the required identity $H_g=-2R_{n|h}$ follows as before.

\end{proof}

 The above Lemmas \ref{Id_R_1A}-\ref{lem:h1n48}, when taken together, show that $H_g = -2R_{n_g|1}$ for  $g \in M_{24}$ with $h_g=1$ 
 Now we will continue to show  the identity  for the remaining $M_{24}$ classes with $n_g=n$, $h_g=h>1$.

 \begin{lem}
Let $g$ be an element of $M_{24}$ that has $h_g>1$ and $n_g\neq 21$. Then $H_g=-2R_{n|h}$.
\end{lem}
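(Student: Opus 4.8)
The plan is to show that the difference $D:=H_g+2R_{n|h}$ vanishes identically, where $n=n_g$ and $h=h_g>1$. First I would record that $h_g>1$ forces $\chi(g)=0$, so by \eq{h_g_explicit} the shadow $\chi(g)\h^3$ of $H_g$ is zero and $H_g$ is a genuine holomorphic modular form of weight $1/2$ on $\G_0(n)$ with multiplier $\psi=\rho_{n|h}\e^{-3}$; likewise \eq{S_coincidence} gives $S_{n|h}=0$, so Proposition \ref{prop:Rismock} shows that $R_{n|h}$ also has vanishing shadow and is a genuine weight $1/2$ modular form on $\G_0(n)$ with the same multiplier $\psi$. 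At the infinite cusp $H_g=-2q^{-1/8}+O(q^{7/8})$ while $-2R_{n|h}=-2q^{-1/8}+O(q^{7/8})$, so the principal parts cancel and $D=O(q^{7/8})$ there; and since both $H_g$ (from its shape $-\til T_g/\h^3$) and $R_{n|h}$ (by the defining property of Rademacher sums) are holomorphic at every cusp, $D$ is a weight $1/2$ holomorphic modular form on $\G_0(n)$ with multiplier $\psi$ that vanishes at all cusps.

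Next I would set $G_{n|h}:=\h^3 D$. This has weight $2$ and multiplier $\e^3\psi=\rho_{n|h}$, which is trivial on $\G_0(nh)$; as $\h^3$ vanishes at every cusp and $D$ is bounded there, $G_{n|h}\in S_2(\G_0(nh))$. The decisive extra constraint is that $D=G_{n|h}/\h^3$ must stay holomorphic at each cusp, which forces $G_{n|h}$ to vanish at every cusp $\k$ to order at least $\mathrm{ord}_\k(\h^3)$ --- and at the width-$nh$ cusp $0$ one computes $\mathrm{ord}_0(\h^3)=nh/8$, a strong condition. Since $\h$ is nonvanishing on $\H$, establishing $G_{n|h}=0$ is equivalent to $D=0$, i.e.\ to the desired identity $H_g=-2R_{n|h}$.

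It then remains to prove $G_{n|h}=0$, which I would split according to the value of $nh$ read off Table \ref{examples_eta}. For $g\in\{2B,3B,4A,4C\}$ we have $nh\in\{4,9,8,16\}$, the curves $X_0(nh)$ have genus zero, $S_2(\G_0(nh))=0$, and $G_{n|h}=0$ at once. For $g\in\{10A,12A,6B\}$ we have $nh\in\{20,24,36\}$ and $X_0(nh)$ has genus one; here $\mathrm{ord}_0(\h^3)=nh/8\geq 5/2$, so the holomorphy of $D$ forces the holomorphic differential attached to $G_{n|h}$ to vanish at $\varphi_{nh}(0)$, and since a nonzero holomorphic differential on a genus-one curve is nowhere vanishing, $G_{n|h}=0$.

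The remaining and hardest case is $g=12B$, where $nh=144$ and $X_0(144)$ has genus $13$. For this I would reproduce the Riemann--Roch argument already used to prove $S_{12|12}=0$: holomorphy of $D$ at the width-$144$ cusp $0$ places the differential of $G_{n|h}$ in $\Omega_X(rx)$ for $x=\varphi_{144}(0)$ and a suitable $r$, and comparing $\dim\Omega_X(rx)$ with $\dim\Omega_X((r-1)x)$ via \eq{eqn:RR} --- each computed by inspecting the weight-two cusp forms on $\G_0(144)$ that are sufficiently vanishing at $\inf$ after applying the Fricke involution $w_{144}$ --- shows the leading coefficient of $G_{n|h}$ at $x$ must vanish, whence $G_{n|h}=0$ and $H_g=-2R_{n|h}$. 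The main obstacle is precisely this step: pinning down the exact vanishing order $r$ (through the local expansion governed by the sawtooth exponent $\nu=\{\!\{\tfrac{m}{h}+\tfrac{v}{8}\}\!\}$ of \S\ref{sec:Var}) and carrying out the dimension count on $S_2(\G_0(144))$. The class $21AB$ is excluded from this lemma because its analysis demands a separate Riemann--Roch treatment, deferred to the following lemma.
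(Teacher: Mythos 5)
Your route is genuinely different from the paper's. The paper forms the \emph{ratio} $K_{n|h}=R_{n|h}/H_g$: since for every class with $h_g>1$, $n_g\neq 21$ the proposed $H_g$ is an eta-quotient with no zeros on $\H$, $K_{n|h}$ is a modular function on $\G_0(n)$ whose only possible poles are at cusps; the expansions \eq{expand_R_other_cusp} for $R_{n|h}$ and a direct computation showing $\til T_g/\h^3\lvert\s = a_g q^{1-\n}+{\cal O}(q^{2-\n})$ with $a_g\neq 0$ at each non-infinite cusp then show $K_{n|h}$ is pole-free, hence the constant $-1/2$. This disposes of all eight classes at once, with no reference to the genus of $X_0(nh)$. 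Your difference-plus-$\h^3$ argument instead lands in $S_2(\G_0(nh))$ and mirrors the strategy of \S\ref{SSS}; it has the advantage of needing only \emph{boundedness} of $H_g$ at the cusps rather than the non-vanishing of the leading coefficient $a_g$, but it pays for this by requiring genus and Riemann--Roch input. Note that your assertion that $H_g$ is bounded at every cusp ``from its shape $-\til T_g/\h^3$'' is not automatic (e.g.\ $\L_N/\h^3$ blows up at $0$); it is exactly the same cusp-by-cusp computation on the eta-quotients of Table \ref{h_g} that the paper performs, so you have not saved that work.

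There is, however, a genuine gap: the class $12B$. Your genus-zero cases ($nh\in\{4,8,9,16\}$) and genus-one cases ($nh\in\{20,24,36\}$, where the differential attached to $G_{n|h}$ is forced to vanish at $\varphi_{nh}(0)$ and a nonzero holomorphic differential on a genus-one curve is nowhere vanishing) are complete and correct. But for $nh=144$ you only gesture at the Riemann--Roch computation and yourself identify it as ``the main obstacle.'' To close it you must pin down the vanishing order --- boundedness of $D$ at the width-$144$ cusp $0$ forces $G_{n|h}\lvert\til\s={\cal O}(q^{18})$ there, i.e.\ the differential lies in $\Omega_X(17x)$ for $x=\varphi_{144}(0)$ --- and then verify by inspection of the $13$-dimensional space $S_2(\G_0(144))$ (after applying the Fricke involution $w_{144}$) that no nonzero cusp form vanishes to that order at $x$. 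This is plausible (seventeen linear conditions on a thirteen-dimensional space) and parallels the paper's treatment of $S_{12|12}=0$, but as written it is an unexecuted computation, not a proof. The paper's ratio argument handles $12B$ with no extra effort, which is precisely what makes it the more economical choice here.
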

\begin{proof}

To prove the identity $-2R_{n|h} = H_g$ for the remaining classes $[g]$ with $h_g>1$, we consider the following function 
$$
K_{n|h}  =  \frac{R_{n|h}}{H_g} \;. 
$$
Since both $R_{n|h}$ and $H_g=-\til T_g/ \h^3$ are known to be weight $1/2$ modular forms which transform with a multiplier $\r_{n|h}$ on $\G_0(n)$, we conclude that $K_{n|h}$ is a modular function on  $\G_0(n)$. As in the previous subsection, such a modular function has to be either constant or have a pole. 
For $g \in M_{24}$ with $h_g>1$ and $n_g\neq 21$, the McKay--Thompson series $H_g$ are all given by certain $\h$-quotients (cf. Table \ref{h_g}). Since $\h$ does not have a zero in $\H$, we only have to check that  $K_{n|h}$ has no pole at any cusp of $\G_0(n)$. 
 At the infinite cusp we have $R_{n|h} = q^{-1/8} + {\cal O}(q^{7/8})$ and $H_g = -2 q^{-1/8} + {\cal O}(q^{7/8})$ and hence $K_{n|h}$ has no pole at the infinite cusp.

To see that it is also bounded near a non-infinite cusp $\k = \s \inf =\til \s \inf$, $\s \in \SL_2(\Z)$, with a scaling element $\til\s$, we consider the function 
$$
G^{(\k)}_{n|h} =G^{(\k)}_{n|h} \big(\s \,(v(\t+\b))\big) =    \frac{R_{n|h}^{(\k)}(\t)}{\til T_g/ \h^3 \vert_{\e^{-3},1/2} \,\s(v(\t+\b))}
$$
where $R_{n|h}^{(\k)} (\t) $ has been shown in Appendix  \ref{The Rademacher Sums at Other Cusps} to have an expansion 
\be\label{expand_R_other_cusp}
R_{n|h}^{(\k)} (\t) = R_{n|h}^{(\k)} \big\lvert_{\e^3,1/2} \s\, (v(\t+\b))  = \sum_{k=1}^\inf c_{\G,\r}^{(\k)} (k-\n) \ex(\t(k-\n)\hspace{.2mm})\;,
\ee
where $\nu=\{\!\{ \tfrac{m}{h}+\tfrac{v}{8}\}\!\}$ as explained in \eq{S_nh_other_cusp}.
With direct calculation, one can check that also 
$$
\frac{\til T_g}{ \h^3}\big \vert_{\e^{-3},1/2} \,\s(v(\t+\b)\hspace{.2mm}) =a_g \,q^{1-\n} +  {\cal O}( q^{2-\n} ) 
$$
at every non-infinite cusp $\k$ of $\G_0(n)$ with some $a_g \in \C$, $a_g \neq 0$. 

This shows that $K_{n|h}$ is a modular function with no pole and hence constant, which is $-1/2$ from its value at the infinite cusp.

\end{proof}

\begin{lem}
Suppose that $g\in M_{24}$ is such that $n=n_g=21$ and $h=h_g=3$. Then $H_g=-2R_{n|h}$.
\end{lem}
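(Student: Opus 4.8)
Since $h_g=3>1$ we have $\chi(g)=0$, so by \eq{h_g_explicit} the McKay--Thompson series $H_g=-\til T_g/\h^3$ is a shadow-free weight $\tfrac12$ modular form on $\G_0(21)$ with multiplier $\psi=\rho_{21|3}\e^{\mathsmaller{-3}}$, holomorphic on $\H$ and with $q$-expansion $-2q^{\mathsmaller{-1/8}}+{\cal O}(q^{7/8})$. By Lemma~\ref{lem:n=21} we know $S_{21|3}=0$, so Proposition~\ref{prop:Rismock} shows $R_{21|3}$ to have vanishing shadow; it is therefore likewise a holomorphic weight $\tfrac12$ form on $\G_0(21)$ with the same multiplier $\psi$ and leading term $q^{\mathsmaller{-1/8}}$, and by \eq{expand_R_other_cusp} it is holomorphic at every non-infinite cusp. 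The assertion $H_g=-2R_{21|3}$ is equivalent to the vanishing of $\Delta:=H_g+2R_{21|3}$, which I plan to extract from the weight two form $G:=\h^3\Delta=-\til T_g+2\h^3R_{21|3}$. As its multiplier $\e^3\psi=\rho_{21|3}$ is trivial on $\G_0(63)=\G_0(N_g)$, the function $G$ is a holomorphic modular form of weight $2$ for $\G_0(63)$ with trivial multiplier.

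First I would show that $G$ is a cusp form. At $i\inf$ the polar terms $\mp 2q^{\mathsmaller{-1/8}}$ of $H_g$ and $2R_{21|3}$ cancel, so $\Delta={\cal O}(q^{7/8})$ and $G={\cal O}(q)$. At any non-infinite cusp $\h^3$ vanishes while $R_{21|3}$ stays holomorphic, so $2\h^3R_{21|3}$ vanishes there and $G$ vanishes precisely when $\til T_g$ does. Writing $\til T_g=\tfrac13(7A-B)$ with $A=\h^3\h(7\t)^3/\h(3\t)\h(21\t)$ and $B=\h^6/\h(3\t)^2$ --- each a genuine weight two form on $\G_0(63)$ --- a standard computation of $\eta$-quotient orders at the cusps shows that $\til T_g$ vanishes at the cusps $0$ and $1/7$; at the non-infinite cusps whose denominator is divisible by $3$ both $A$ and $B$ have order zero, and one must check that their leading coefficients cancel in $7A-B$. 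Granting these vanishings, $G\in S_2(\G_0(63))$.

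It then remains to prove $G=0$. Both summands of $G=-\til T_g+2\h^3R_{21|3}$ vanish to order at least $8$ at the cusp $0$: the first because $\mathrm{ord}_0\til T_g=8$ (the order $8$ of $A$ dominating the order $14$ of $B$), the second because $\h^3$ vanishes to order $\tfrac{63}{8}$ there. Hence $\mathrm{ord}_0 G\geq 8$, so the holomorphic differential $\o_G$ on $X=X_0(63)$ attached to the cusp form $G$ vanishes to order at least $7$ at $x=\varphi_{63}(0)$; in particular $\o_G\in\Omega_X(5x)$. But this space was shown to be trivial in the proof of Lemma~\ref{lem:n=21}, by carrying the vanishing to $i\inf$ with the Fricke involution $w_{63}$ of \eq{fricke} and observing that $\G_0(63)$ admits no nonzero weight two cusp form that is ${\cal O}(q^6)$. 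Therefore $\o_G=0$, so $G=0$, $\Delta=0$, and $H_g=-2R_{21|3}$.

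The decisive obstacle is the cuspidality in the second step. For the classes with $n_g\neq 21$ the function $H_g$ is a single $\eta$-quotient, nowhere vanishing on $\H$, so the quotient $R_{n|h}/H_g$ is automatically pole-free on $\H$ and the method of the preceding lemma applies directly; here $\til T_g$ is a genuine combination and $H_g$ acquires zeros on $\H$, so that shortcut is lost. What must be verified is the cancellation of leading coefficients forcing $\til T_g$ (equivalently $H_g$) to be holomorphic at the cusps with $3\mid c$, where neither $A$ nor $B$ vanishes on its own. Once this is in hand the Riemann--Roch input is inherited verbatim from Lemma~\ref{lem:n=21}.
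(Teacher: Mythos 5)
Your architecture is sound and genuinely different from the paper's: you form the weight-two difference $G=\h^3(H_g+2R_{21|3})$ on $\G_0(63)$ and try to kill it with the vanishing of $\Omega_X(5x)$ already established in Lemma~\ref{lem:n=21}, whereas the paper never compares $H_g$ and $R_{21|3}$ directly --- it determines $R_{21|3}$ from scratch as an element of the two-dimensional space $K_X(2x)$ on $X_0(21)$ (genus one), pins down the coefficients by the polar term at $i\inf$ and the absence of a pole of $R_{21|3}$ at the cusp $1/3$, arrives at $6R_{21|3}=7\f_{21,1}-\f_{21,2}$, and only then reads off $3H_g=\f_{21,2}-7\f_{21,1}$ from Table~\ref{h_g}. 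But your proof has a genuine gap, and it sits exactly where you flag it: you never establish that $\til T_g=\tfrac13(7A-B)$ vanishes at the five non-infinite cusps of $\G_0(63)$ with $3\mid c$ (the two cusps over $1/3$ and $2/3$, the cusp $1/9$, and the two cusps over $a/21$), where your own order computation shows both $A$ and $B$ have order exactly zero. This is not a cosmetic omission: if the constant terms fail to cancel at even one such cusp, $G$ is a non-cuspidal holomorphic form, $G\,d\t$ acquires a simple pole there, and the appeal to $\Omega_X(5x)=0$ --- which is a statement about \emph{holomorphic} differentials --- says nothing. A degree count does not rescue you either: the canonical degree on the genus-five curve $X_0(63)$ is $8$, while a zero of order $7$ at $x$ together with up to seven simple poles at the remaining cusps is perfectly consistent.

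Moreover, the missing cancellation is precisely the nontrivial arithmetic content of this case, not a routine afterthought. It amounts to the statement that in $7\f_{21,1}-\f_{21,2}$ the leading coefficients $D_1,D_2$ of $\f_{21,1},\f_{21,2}$ at the cusp $1/3$ (both of order $q^{-21/24}$ there) satisfy $7D_1-D_2=0$, an eta-multiplier computation. The paper obtains this relation as an \emph{output}: since $R_{21|3}$ is pole-free away from $i\inf$ by construction (cf.~\eq{expand_R_other_cusp}), writing $R_{21|3}=C_1\f_{21,1}+C_2\f_{21,2}$ forces $C_1D_1+C_2D_2=0$, i.e.\ $C_1+7C_2=0$, and the identity with $H_g$ then follows by comparison with the table. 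Your route instead requires the same fact as an \emph{input} about $H_g$, and requires it at every $\G_0(63)$-cusp class rather than once per $\G_0(21)$-cusp. To complete your proof you must either carry out these leading-coefficient computations explicitly, or reorganise the argument so that the regularity of $R_{21|3}$ at the cusps --- which you get for free from the Rademacher construction --- does the work, which is in effect what the paper's proof does.
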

\begin{proof}
Set $\f_{21,1}(\t)=\eta(7\t)^3/\eta(3\t)\eta(21\t)$ and $\f_{21,2} (\t)=\eta(\t)^3/\eta(3\t)^2$. Then both $\f_{21,1}$ and $\f_{21,2}$ are (meromorphic) modular forms of weight $1/2$ for $\G_0(21)$ with multiplier system $\rho\e^{-3}$ where $\rho=\rho_{21|3}$, so that 
$$
G=\frac{A R_{n|h}+B \f_{21,1}}{\f_{21,2}}\quad, \quad A,B \in \C
$$
is a modular function for $\G_0(21)$ whose only poles are at the non-infinite cusps of $\G_0(21)$. 
Since $R_{n|h}, \f_{21,2} ,\f_{21,2}$ all have the expansion $q^{-1/8} + {\cal O}(q^{7/8})$ near the infinite cusp, we see that $G$ has no pole at the infinite cusp. 
For $e_{1,2,3}$ with $e_1<e_2<e_3=21$ the exact divisors of $21$ that are larger than 1 let $w_{e_i}$ be an element of the Atkin--Lehner involution of $\G_0(21)$ associated to $e_i$. Then the elements $w_{e_i}$ furnish scaling elements at the respective representatives $\k_i=w_{e_i}\inf$ for the three non-infinite cusps of $\G_0(21)$, represented by $\tfrac{e_i}{n}$ having width $e_i$.  Using the fact that the slash operator $|_{1/2,\e}w_{e_i}$ maps $\eta(e_j\t)$ to  $\eta(e_i* e_j \t)$ (up to a non-vanishing scale factor), we see that the function $G^{(\k_i)}=G(w_{e_i}\t)$ remains bounded as $\t\to i\inf$ except possibly in the case that $e_3=21$, in which case we have
$$
G^{(\k=0)}(\t) =G(w_{21}\t)=\left(A' R^{(\k=0)}_{n|h}(\t) +B'\, \frac{\eta(3\t)^3}{\eta(7\t)\eta(\t)}\right)\frac{\eta(7\t)^2}{\eta(21\t)^3}\;,\;A',B' \in \C.
$$
From \eq{expand_R_other_cusp} we see that $G^{(\k=0)}(\t) $ has a Fourier expansion of the form $C q^{-2} +{\cal O}(q^{-1})$. 
Setting $D=2x$ where $x$ is the image of $w_{21}\inf=0$ under the natural map $\varphi_{21}:\H^*\to X=X_0(21)$ we see that $f$ must belong to the space $K_X(D)$ of meromorphic functions on $X=X_0(21)$ having a pole of order at most $2$ at $x$ and no other poles. The genus of $X_0(21)$ is $1$ so according to the Riemann--Roch theorem (cf. (\ref{eqn:RR})) we have $\dim K_X(D)=\dim\Omega_X(D)+2$ where $\Omega_X(D)$ denotes the space of holomorphic differentials on $X$ vanishing to order at least $2$ at $x$. The full space $\Omega_X$ of holomorphic differentials on $X$ is in correspondence with $S_2(\G_0(21))$, and this one-dimensional space is in turn spanned by the $L$-series for the elliptic curve $y^2+xy=x^3+x$. This cusp form has a Fourier expansion of the form $q+{\cal O}(q^2)$ and is an eigenform for $w_{21}$ so we conclude that it does not belong to $\Omega_X(D)$, and thus $\dim \Omega_X(D)=0$ and $\dim K_X(D)=2$. Now $K_X(D)$ includes constant functions on $X$ and it also includes the non-constant function $\f_{21,1}/\f_{21,2}$, so it is spanned by these functions, and we may conclude that $R_{n|h}=C_1\f_{21,1}+C_2\f_{21,2}$ for some $C_1,C_2\in\C$. By comparison of polar terms in the Fourier expansions of $R_{n|h}$, $\f_{21,1}$ and $\f_{21,2}$ we have $C_1+C_2=1$. Now consider the expansion of $R_{n|h}$ at the cusp represented by $w_7\inf$ (which is also represented by $1/3$). We have $(\f_{21,1}|w_7)(\t)=D_1q^{-21/24}+{\cal O}(q^{3/24})$ and $(\f_{21,2}|w_7)(\t)=D_2q^{-21/24}+{\cal O}(q^{3/24})$ and the fact that $R_{n|h}$ has no poles away from the infinite cusp implies that $C_1+7C_2=0$. We conclude that $6R_{n|h}=7\f_{21,1}-\f_{21,2}$. From Table \ref{h_g} we have the explicit expression $3H_g=\f_{21,2}-7\f_{21,1}$. This proves the lemma.
\end{proof}

\section{Spectral Theory}
\label{Spec}
\setcounter{equation}{0}

In this section we would like to demonstrate the convergence of the Selberg--Kloosterman zeta function $Z_{0,k}(s,\r\e^{\mathsmaller{-3}})$, defined in \eq{defn:genKlosum} and \eq{defn:SelKlozeta}, of $\G=\G_0(n)$ for $n$ a positive integer. As before, we take $\psi =\e^{\mathsmaller{-3}} \r_{n\lvert h}$ for some $h$ dividing both $24$ and $n$  (cf. (\ref{rho_multiplier})). Combined with the discussions in \S \ref{sec:Conv} and \S \ref{sec:Coeffs}, Theorem \ref{thm:SelKloZetaConv} completes the proof of the convergence of the Rademacher sum \eq{defn:RGrho}.

Let $w$ be a positive real number and let $\psi$ be a multiplier system of weight $w$ for $\G$. We say  a function $g(z)$ on $\H$ is {\em automorphic for $\G$ of weight $w$ with respect to $\psi$} if
$$
	\psi(\g)g(\g z)\exp(iw\arg({\jac(\g,z)})/2)=g(z)
$$ 
for all $\g\in \G$, where $\arg(\cdot)$ is defined so that $-\pi<\arg(z)\leq \pi$ for $z\neq 0$, and write $\mathcal{H}_{\psi,w}$ for the Hilbert space consisting of those automorphic functions which also satisfy the growth condition 
$$
	\int\!\!\!\int_{\G\backslash\H}|g(\t)|^2\frac{{\rm d}x{\rm d}y}{y^2}<\inf
$$
where $x=\Re(\t)$ and $y=\Im(\t)$ for $z\in \H$. We consider the operator $\Delta_w$ on $\mathcal{H}_{\psi,w}$ given by
$$
	\Delta_{w}=y^2\left(\frac{\partial^2}{\partial x^2}+\frac{\partial^2}{\partial y^2}\right)-iwy\frac{\partial}{\partial x}.
$$
According to \cite{Sel_EstFouCoeffs} if $\psi$ is non-trivial on every parabolic subgroup of $\G$ then $\Delta_{w}$ has a discrete spectrum of eigenfunctions in $\mathcal{H}_{\psi,w}$ and there exists a complete orthonormal system $\{u_j(z)\}$ of eigenfunctions satisfying
$$
	\Delta_w u_j(z)+\lambda_j u_j(z)=0
$$
for a set $\{\lambda_j\}$ of eigenvalues which are all real. (Roelcke proves in \cite{Roe_EigPrbAutFrmsHypPln} that $\Delta_{w}$ has a unique self-adjoint extension to $\mathcal{H}_{\psi,w}$.) Further, the $\lambda_j$ are all positive except possibly for a finite number which are of the form 
\be\label{eqn:cuspformeigenval}
(w/2-l)(1+l-w/2)
\ee
where $l$ is a non-negative integer less than $w/2$. If there is an eigenvalue of the form (\ref{eqn:cuspformeigenval}) then, negative or otherwise, it's multiplicity is equal to the dimension of the space $S_{w-2l}(\G,\psi)$ of (holomorphic) cusp forms for $\G$ of weight $w-2l$ with respect to $\psi$.

By the trivial estimate $|S(m,k,c,\psi)|<c$ the series (\ref{defn:SelKlozeta}) defining the Selberg--Kloosterman zeta function $Z_{m,k}(s,\psi)$ converges absolutely and locally uniformly for $\Re(s)>1$ and thus defines a holomorphic function in this domain. Selberg demonstrates in \cite{Sel_EstFouCoeffs} that this function admits an analytic continuation to a function meromorphic in the entire $s$-plane that is holomorphic for $\Re(s)>1/2$ except possibly for a finite number of simple poles lying on the real segment $1/2<s< 1$. Poles on this segment can only occur at points of the form $1/2+\sqrt{1/4-\lambda_j}$ where $0<\lambda_j<1/4$ and $\lambda_j$ is an eigenvalue of $\Delta_w$, and such a pole does occur if $\lambda_j$ is not of the form (\ref{eqn:cuspformeigenval}) and the coefficients of $\ex((m-\alpha)x)$ and $\ex((n-\alpha)x)$ in the Fourier expansion (with respect to $x$) of the corresponding eigenfunction $u_j(z)$ are both non-zero. If $\lambda_j$ is of the form (\ref{eqn:cuspformeigenval}) with $l=\lfloor w/2\rfloor$ then there is a pole at $s=1+\lfloor w/2\rfloor-k/2$ just in the case that there is a cusp form $f(\t)\in S_{w-2\lfloor w/2\rfloor}(\G,\psi)$ for which the coefficients of $q^{m-\alpha}$ and $q^{n-\alpha}$ (in its Fourier expansion with respect to $\t$) are both non-zero.

For the case of relevance for us we have $w=1/2$ and so all the $\lambda_j$ will be positive. Roelcke proves in \cite{Roe_EigPrbAutFrmsHypPln} that the minimal value of a $\lambda_j$ is $3/16$ in case $w=1/2$ (see \cite{Sar_ANT} for a nice exposition) so we conclude that there are no poles for $Z_{0,k}(s,\rho\e^{\mathsmaller{-3}})$ in $\Re(s)>3/4$. The only possible value for $l$ in (\ref{eqn:cuspformeigenval}) is $l=0$ so that the occurrence of a pole at $s=1+\lfloor w/2\rfloor-w/2=3/4$ depends upon the existence of a cusp form $f\in S_{w}(\G,\psi)$ for which the coefficients of $q^{0-\alpha}$ and $q^{k-\alpha}$ are both non-zero. We have $-\alpha=-1/8$ so there is no such cusp form and we conclude that $Z_{0,k}(s,\rho\e^{\mathsmaller{-3}})$ is in fact analytic in $\Re(s)>3/4-\delta$ for some $\delta>0$ and has finitely many simple poles lying on the line segment $1/2<s<3/4$.

It remains to show that the value of the analytically continued function $Z_{0,k}(s,\rho\e^{\mathsmaller{-3}})$ at $s=3/4$ is indeed the sum
$$
	\sum_{c>0}\frac{S(0,k,c,\rho\e^{\mathsmaller{-3}})}{c^{3/2}}
$$
that we expect it is. To establish this we adopt the approach presented in \S2 of \cite{Kno_SmlPosPowTheta}\footnote{If you consult this reference please see also \S II of \cite{Kno_SmlPosWgt} for some important corrections.} which is an adaptation of a technique commonly used in proving the prime number theorem (cf. \cite{Dav_MNT}). (For a very crisp application of what is essentially the same argument, see the proof of Theorem 2 in \cite{GolSar_Kloo}.) Fix a group $\G$, a multiplier $\psi=\r\e^{\mathsmaller{-3}}$, and an integer $k$. We will write $Z(s)=Z_{0,k}(s,\rho\e^{\mathsmaller{-3}})$ to ease notation. Define
$$
\Sigma(x)=\sum_{0<c<x} \frac{S(0,k,c,\rho\e^{\mathsmaller{-3}})}{c^{3/2}}
$$
for $x>0$. We require to show that $\lim_{x\to \infty}\Sigma(x)=Z(3/4)$. Observe that for $a>0$ and $x\neq c$ the integral 
$$
\frac{1}{\tpi}\int_{a-i\inf}^{a+i\inf}\left(\frac{x}{c}\right)^t\frac{{\rm d}t}{t}
$$
is $1$ or $0$ according as $x>c$ or $x<c$. Consequently we may rewrite $\Sigma(x)$ as 
$$
\Sigma(x)=\sum_{c>0}\frac{S(0,k,c,\rho\e^{\mathsmaller{-3}})}{c^{3/2}}
\frac{1}{\tpi}\int_{a-i\inf}^{a+i\inf}\left(\frac{x}{c}\right)^t\frac{{\rm d}t}{t}
$$
for non-integral values of $x$. Choose $\varepsilon>0$ and set $a=1/2+\varepsilon$ so that $Z(t/2+3/4)$ converges absolutely for $\Re(t)=a$. For $T>0$ define $\Sigma_T(x)$ by setting
$$
\Sigma_T(x)=\frac{1}{\tpi}\int_{a-iT}^{a+iT}Z(t/2+3/4)x^t\frac{{\rm d}t}{t}.
$$
Then for the difference $\Sigma(x)-\Sigma_T(x)$ we have the upper bound
\be\label{eqn:SigDiff}
	|\Sigma(x)-\Sigma_T(x)|<x^a\sum_{c>0}c^{-1-\varepsilon}
	\min\left(1,\frac{1}{T|\log(x/c)|}\right)
\ee
thanks to the following estimate, which may be found in \S17 of \cite{Dav_MNT}.
$$
\frac{1}{2\pi}
\left|
\int_{a-i\inf}^{a+i\inf}\left(\frac{x}{c}\right)^t\frac{{\rm d}t}{t}
-\int_{a-iT}^{a+iT}\left(\frac{x}{c}\right)^t\frac{{\rm d}t}{t}
\right|
<\left(\frac{x}{c}\right)^a\min\left(1,\frac{1}{T|\log(x/c)|}\right)
$$
Following exactly the argument of the last paragraph of p.171 of \cite{Kno_SmlPosPowTheta} we can replace the summation in (\ref{eqn:SigDiff}) with the more explicit bound
\be\label{eqn:DiffSigSigT}
	|\Sigma(x)-\Sigma_T(x)|
	<
	K\frac{x^{1/2}}{T}(x^{\varepsilon}+\log x)
\ee
where $K$ is some constant (that does not depend on our choice of $n$). 

Consider now the integral
$$
	\frac{1}{\tpi}\int_CZ(t/2+3/4)\,x^t\,\frac{{\rm d}t}{t}
$$
where $C=C_0+C_1+C_2+C_3$ is the positively oriented rectangle with corners $(\pm 1/2)+\varepsilon\pm iT$. For concreteness let us say that $C_0$ is the right most vertical component  and $C_1$ is the upper most horizontal component, \&c. Then the contribution of $C_0$ is exactly $\Sigma_T(x)$. We denote the contribution of $C_i$ by $I_i$ for $i\neq 0$. By the residue theorem we have
\be\label{eqn:ResThmSigT}
	\Sigma_T(x)+I_1+I_2+I_3=Z(3/4)+\sum_{j\in J}	{\varrho_j}
	x^{2s_j-3/2}
\ee
where $\{s_j\}_{j\in J}$ is an enumeration of the (finite) set of poles of $Z(s)$ in the interval $1/2+\varepsilon/2<s<3/4$ (we may choose $\varepsilon>0$ so that $s=1/2+\varepsilon/2$ is not a pole) and $\varrho_j$ is the residue of $Z(s)$ at $s=s_j$ divided by $2s_j-3/2$.

We wish to determine what happens when $x\to \inf$ in (\ref{eqn:ResThmSigT}). Observe, for a start, that the sum over $j$ vanishes in this limit since $2s_j<3/2$ for all $j\in J$. We claim that the integrals $I_i$ also vanish in this limit. To estimate the $I_i$ we utilise the growth condition
$$
	|Z(s)|=O\left(\frac{
		|s|^{1/2}}{\Re(s)-1/2}\right)
$$
as $|\Im(s)|\to \infty$, valid for $\Re(s)>1/2$. Such a result was first established by Goldfeld--Sarnak in \cite{GolSar_Kloo}. We have applied the generalisation found in \cite{Pri_GnlzdGolSarEst} due to Pribitkin. Since $Z(s)$ converges absolutely for $\Re(s)>1$ we have that $|Z(t/2+3/4)|={\cal O}(1)$ as $|\Im(t)|\to \inf$ when $\Re(t)=1/2+\varepsilon$, and thus an application of the Phragm\'en--Lindel\"of theorem (cf. \cite{HarRie_DirSeries}, Thm. 14) yields
\be\label{eqn:PLbound}
	|Z(t/2+3/4)|< K_{\varepsilon} \,|\Im(t)|^{1/4-\Re(t)/2+\varepsilon/2}
\ee 
for all $t$ such that $-1/2+\varepsilon \leq \Re(t)\leq 1/2+\varepsilon$ and $|\Im(t)|$ is sufficiently large, for some constant $K_{\varepsilon}$ depending only on $\G$, $\psi=\rho\e^{\mathsmaller{-3}}$, $k$ and $\varepsilon$. Applying (\ref{eqn:PLbound}) to the integrals $I_i$ we obtain that $I_2={\cal O}(T^{1/2}x^{-1/2+\varepsilon})$, and both $I_1$ and $I_3$ are ${\cal O}(T^{-1}x^{1/2+\varepsilon})$ so long as $T<x^2$. Setting $T=x^{2/3}$, letting $x\to \inf$ in (\ref{eqn:ResThmSigT}) and applying (\ref{eqn:DiffSigSigT}) gives us the desired result: $\lim_{x\to \inf}\Sigma(x)=Z(3/4)$. In particular, we have the following theorem:
\begin{thm}\label{thm:SelKloZetaConv}
Let $\G=\G_0(n)$ for some positive integer $n$, let $h$ be a divisor of $\gcd(n,24)$ and set $\rho=\rho_{n|h}$. Then for any positive integer $n$ the Selberg--Kloosterman zeta function $Z_{0,k}(s,\rho\e^{\mathsmaller{-3}})$ converges at $s=3/4$.
\end{thm}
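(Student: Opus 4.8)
\section*{Proof proposal}

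The plan is to establish the theorem in three stages: first analytically continuing $Z_{0,k}(s,\psi)$ (with $\psi=\r\e^{\mathsmaller{-3}}$) past the line $\Re(s)=1$ by means of Selberg's spectral theory, then locating its poles precisely enough to conclude holomorphy at $s=3/4$, and finally identifying the value of the continuation at $s=3/4$ with the conditionally convergent series. To begin, the trivial estimate $|S(0,k,c,\psi)|<c$ shows that the series \eq{defn:SelKlozeta} converges absolutely and locally uniformly for $\Re(s)>1$, defining a holomorphic function there. To push beyond $\Re(s)=1$ I would invoke the results of \cite{Sel_EstFouCoeffs}, which attach $Z_{0,k}(s,\psi)$ to the spectral resolution of the weight-$w$ Laplacian $\Delta_w$ on $\mathcal{H}_{\psi,w}$ and furnish a meromorphic continuation to $\Re(s)>1/2$ whose only possible singularities are simple poles on the real segment $1/2<s<1$, occurring at $s=1/2+\sqrt{1/4-\lambda_j}$ for the small eigenvalues $\lambda_j<1/4$ of $\Delta_w$.

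The decisive feature of our situation is that $w=1/2$. Here Roelcke's eigenvalue bound \cite{Roe_EigPrbAutFrmsHypPln} (see also \cite{Sar_ANT}) gives $\lambda_j\geq 3/16$ for every $j$, which forces all potential poles to lie at or to the left of $s=3/4$; hence $Z_{0,k}(s,\psi)$ is holomorphic on $\Re(s)>3/4$. It then remains only to exclude a pole exactly at $s=3/4$. Since $w/2<1$, the sole admissible value of $l$ in \eq{eqn:cuspformeigenval} is $l=0$, so by Selberg's criterion a pole at $s=3/4$ can arise only from a cusp form in $S_{1/2}(\G,\psi)$ whose Fourier coefficients at the frequencies $0-\alpha$ and $k-\alpha$ are both non-zero. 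Because $\psi(T)=\ex(\alpha)$ with $\alpha=1/8$, the first of these frequencies is $-\alpha=-1/8<0$; no holomorphic cusp form can carry such a polar term, so no pole occurs and $Z_{0,k}(s,\psi)$ is holomorphic in a neighbourhood of $s=3/4$.

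The final and, I expect, most delicate stage is to show that this analytic continuation agrees at $s=3/4$ with the series $\sum_{c>0}S(0,k,c,\psi)c^{\mathsmaller{-3/2}}$ itself, the point being that the trivial bound only yields terms of size $c^{\mathsmaller{-1/2}}$, so the series is not absolutely convergent there and its value cannot be read off termwise. I would recover it by the contour-integration technique of Knopp \cite{Kno_SmlPosPowTheta}, a variant of the Tauberian argument familiar from the prime number theorem \cite{Dav_MNT}: express the partial sums $\Sigma(x)$ through a Perron-type integral, approximate by the truncation $\Sigma_T(x)$ of $\tfrac{1}{\tpi}\int Z(t/2+3/4)\,x^t\,t^{\mathsmaller{-1}}{\rm d}t$, and shift the contour across the rectangle with corners $(\pm 1/2)+\varepsilon\pm iT$. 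The residue theorem then produces $Z(3/4)$ together with the residues at the finitely many poles in $1/2<s<3/4$, which vanish as $x\to\inf$ because $2s_j<3/2$, while the horizontal and left vertical edges are controlled by the Goldfeld--Sarnak growth estimate for $Z(s)$ (in the sharpened form due to Pribitkin \cite{Pri_GnlzdGolSarEst}) combined with the Phragm\'en--Lindel\"of principle. Choosing $T=x^{2/3}$ drives every error term to zero and yields $\lim_{x\to\inf}\Sigma(x)=Z(3/4)$, which is the asserted convergence. The crux throughout is securing growth bounds on $Z(s)$ in the critical strip strong enough to justify the contour shift uniformly in $x$; everything else is bookkeeping around the spectral input.
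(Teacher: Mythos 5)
Your proposal follows the paper's own argument essentially step for step: the same Selberg spectral continuation past $\Re(s)=1$, the same use of Roelcke's bound $\lambda_j\geq 3/16$ to rule out poles in $\Re(s)>3/4$, the same exclusion of a pole at $s=3/4$ via the negativity of the frequency $-\alpha=-1/8$, and the same Knopp-style Perron/contour-shift argument with the Goldfeld--Sarnak--Pribitkin bound, Phragm\'en--Lindel\"of, and the choice $T=x^{2/3}$ to identify the continued value with the conditionally convergent sum. The approach and all key inputs coincide with the paper's proof.
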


\section{Conclusion and Discussion}

To summarise, motivated by the AdS/CFT correspondence in physics, we propose to replace the genus zero property by the more general property of Rademacher summability as the organising principle of the modular properties of moonshine phenomena. 
In monstrous moonshine, where the McKay--Thompson series are modular functions, the latter coincides with the former and is hence automatically satisfied, as shown in \cite{Duncan2009}. 
The two conditions decouple in the case of the recently conjectured relation between mock modular forms and $M_{24}$:
on the one hand, it is neither necessary nor sufficient that a group $\Gamma$ have genus zero for it to arise as $\Gamma_g$ for some g in $M_{24}$, and on the other hand, we verify the Rademacher summability property in Theorem \ref{thm:HisR}. 
In view of the above, the Rademacher summability property may replace the genus zero property, and applies to both monstrous moonshine and $M_{24}$. This also suggests that AdS/CFT considerations will lead to further elucidation of the moonshine phenomena, and we expect the powerful Rademacher machinery to prove useful in further study of the connection between modular objects and finite groups in general.

This result raises many interesting physical questions. 
In particular, what do we know about the gravity duals, if they exist, of 2d CFTs with sporadic symmetries? 
Let us first concentrate on the sigma models with target spaces involving $K3$ surfaces, as they are relevant for the structures studied in the present paper. Recall that one of the best known examples of the AdS/CFT correspondence conjectures that the two-dimensional CFT describing the Higgs branch of the system of $Q_1$ D1-branes bound to $Q_5$ D5-branes wrapping a $K3$ surface is dual to the type IIB string theory in the background $AdS_3\times S^3\times K3({Q})$
\cite{MaldacenaAdv.Theor.Math.Phys.2:231-2521998}. Here the radius of curvature of $AdS_3$ and $S^3$ is given by $(Q_1Q_5)^{1/4}$ in the six dimensional unit, and $K3(Q)$ is the ``{attractor K3}" whose moduli are partially fixed by the requirement of minimising the BPS mass of the given charges, or equivalently, by the so-called attractor mechanism of supergravity.  Note that not all $K3$ surfaces can be such an attractor $K3$ for a given set of charges. On the conformal field theory side, the Higgs branch is described by a 2d ${\cal N}=(4,4)$ SCFT with central charge 
 $c=6k, \;k=Q_1Q_5+1$, which is a deformation of the sigma model on the symmetric product $S^kK3$, as can be most easily understood by going to the D0-D4 duality frame
 \cite{Vafa1996}. 
 
This AdS/CFT dictionary makes it clear that a ``semi-classical-like" AdS gravity description is only guaranteed to exist when we take the central charge $c=6k$, or equivalently the AdS radius, to be very large. 
For this reason, it is in fact surprising that the Rademacher machinery works so well when applied to the partition functions for the $K3$ sigma model with $k=1$, as demonstrated in the present paper. 
Two comments on this issue are in order here. First, analogous Rademacher formulas also exist for the symmetric product sigma model with higher $k$ and we plan to report on this in more detail in the future \cite{to_appear}. In other words, the Rademacher machinery also works when we have very strong reasons to expect them to work. 
Second, recently there has been important progress in developing the localisation techniques for computing gravity path integrals (see, for example, \cite{Dabholkar2011a}), and we have seen that in some cases the highly quantum path integral actually localises into the same form as if coming from a weakly coupled, semi-classical gravitational theory. For instance, very recently the Rademacher sum for the modular form $1/\eta(\t)^{24}$, which is the partition function of a CFT with central charge as small as $c=24$, has been reproduced from a localised gravity path integral \cite{localisation}. This is very surprising since the corresponding black holes have zero macroscopic entropy (that is, the Bekenstein-Hawking entropy grows only linearly with the charges) and hence a priori we have no reason to expect the gravity path integral to take this ``semi-classical" form. 
These results show that the Rademacher formulas are not really the monopoly of a large radius, semi-classical gravity description, although the former is most easily motivated in the semi-classical limit as we reviewed in \S\ref{Introduction}, 
and suggests that it is not unlikely that a localisation calculation could similarly explain the Rademacher sum formulas derived in the present paper from a gravity point of view. 
In the case of monstrous moonshine, on the other hand, the question of whether a semi-classical theory of gravity dual to the moonshine module conformal field theories, or possibly their higher central charge cousins, is an interesting open puzzle \cite{Witten2007}.  

We end with a few more comments concerning the Rademacher formulas studied in this paper.
Firstly, it is remarkable that the Rademacher sums with such a simple polar term $-2 q^{-1/8}$ compute not only the $M_{24}$-modules underlying the massive states in the $K3$ CFT but also  the twisted Euler characteristics of $K3$ surfaces. 
The latter statement comes from the fact that the Rademacher machinery automatically computes the shadows of the resulting mock modular forms, which are in this case given by the twisted $K3$ Euler characteristics multiplied by the modular form $\h(\t)^3$.
Finally, we would like to mention that none of the twisted elliptic genera which arise from the extra, non-$M_{24}$ symmetries discussed in 
\cite{Gaberdiel2011} lead to (mock) modular forms admitting analogous  Rademacher sum expressions. Another related but logically independent observation is the following. If we were to twist the partition function counting the $1/4$-BPS index of type II theory compactified on $K3\times T^2$ with one of these non-$M_{24}$ symmetries, the resulting candidate 4d twisted partition function would have a system of poles and zeros that is more intricate and complicated than the ones coming from twisting with $M_{24}$ symmetries. This adds to the distinction between the $M_{24}$ symmetries and the extra non-$M_{24}$ discrete symmetries which are present in, for instance, the Gepner models, whose role in moonshine is still obscure.

\section*{Acknowledgements}

We thank Atish Dabholkar, Rajesh Gopakumar, Tom Hartman, \"Ozlem Imamoglu, Jan Manschot, Shiraz Minwalla, Ashoke Sen, Edward Witten, and Don Zagier for many useful discussions, and we thank Amanda Folsom and Wladimir de Azevedo Pribitkin for communication related to convergence. We also thank Alejandra Castro, Thomas Creutzig, Igor Frenkel, Christoph Keller, Alex Maloney, Andy Strominger and S.T. Yau for earlier discussions on related topics. M.C. would like to thank the Institute for Advanced Study for hospitality during part of this work, and the organisers and participants of ``the School and Conference on Modular Forms and mock Modular Forms and their Applications in Arithmetic, Geometry and Physics" for stimulating discussions. The research of M.C. is supported in part by NSF grant DMS-0854971.

\clearpage
\appendix

\section{Dedekind Eta Function}
\label{Dedeta}

The {\em Dedekind eta function}, denoted $\eta(\t)$, is a holomorphic function on the upper half-plane defined by the infinite product 
$$\eta(\t)=q^{1/24}\prod_{n\geq 1}(1-q^n)$$
where $q=\ex(\t)=e^{\tpi \t}$. It is a modular form of weight $1/2$ for the modular group $\SL_2(\Z)$ with multiplier $\e:\SL_2(\Z)\to\C^*$, which means that 
$$\e(\g)\eta(\g\t)\jac(\g,\t)^{1/4}=\eta(\t)$$
for all $\g = \big(\begin{smallmatrix} a&b\\ c&d \end{smallmatrix}\big) \in\SL_2(\Z)$, where $\jac(\g,\t)=(c\t+d)^{-2}$. The {\em multiplier system} $\e$ may be described explicitly as 
\be\label{Dedmult}
\e\bem a&b\\ c&d\eem 
=
\begin{cases}
	\ex(-b/24),&c=0,\,d=1\\
	\ex(-(a+d)/24c+s(d,c)/2+1/8),&c>0
\end{cases}
\ee
where $s(d,c)=\sum_{m=1}^{c-1}(d/c)((md/c))$ and $((x))$ is $0$ for $x\in\Z$ and $x-\lfloor x\rfloor-1/2$ otherwise. We can deduce the values $\e(a,b,c,d)$ for $c<0$, or for $c=0$ and $d=-1$, by observing that $\e(-\g)=\e(\g)\ex(1/4)$ for $\g\in\SL_2(\Z)$.

Let $T$ denote the element of $\SL_2(\Z)$ such that $\tr(T)=2$ and $T\t=\t+1$ for $\t\in \H$. Observe that
$$
\e(T^m\g)=\e(\g T^m)=\ex(-m/24)\e(\g)
$$
for $m\in\Z$.

\section{Bessel Function}
\label{sec:Bessel}
\setcounter{equation}{0}

The {\em Bessel function of the first kind}, denoted $J_{\alpha}(x)$, may be defined by the series expression
\be\label{BesselSeriesJ}
	J_{\alpha}(x)=\sum_{m\geq 0}(-1)^m\frac{(x/2)^{2m+\alpha}}{\G(m+1+\alpha)m!}.
\ee
The function $J_{\alpha}(x)$ is asymptotic to $e^{x}\cos(x-\alpha\pi/4-\pi/4)/\sqrt{2\pi x}$ for $x$ large, and in the special case that $\alpha=1/2$ this fact is borne out by the identity
\be\label{BesselJSin}
	J_{1/2}(x)=\sqrt{\frac{2}{\pi x}}\sin(x).
\ee

The {\em modified Bessel function of the first kind}, denoted $I_{\alpha}(x)$, may be defined by the series expression
\be\label{BesselSeries}
	I_{\alpha}(x)=\sum_{m\geq 0}\frac{(x/2)^{2m+\alpha}}{\G(m+1+\alpha)m!}.
\ee
The function $I_{\alpha}(x)$ is asymptotic to $e^{x}/\sqrt{2\pi x}$ for $x$ large, for any $\alpha$. In the special case that $\alpha=1/2$ this fact is borne out by the identity
\be\label{BesselSinh}
	I_{1/2}(x)=\sqrt{\frac{2}{\pi x}}\sinh(x).
\ee

As $x$ approaches $0$ the functions $J_{1/2}(x)$ and $I_{1/2}(x)$ both tend towards $(x/2)^{\alpha}/\G(1+\alpha)$. Since $\G(3/2)=\sqrt{\pi}/2$ we have
\be\label{BesselIJSmall}
	J_{1/2}(x)\approx \sqrt{\frac{2x}{\pi}},\;I_{1/2}(x)\approx \sqrt{\frac{2x}{\pi}}
\ee
as $x\to 0$ in the special case that $\alpha=1/2$.

\section{Lipschitz Summation}
\label{sec:Lipsum}
\setcounter{equation}{0}

The {\em Lipschitz summation formula} is the identity 
\be\label{eqn:Lipsum}
\frac{(-2\p i)^s}{\G(s)}\sum_{k=1}^\inf {(k-\alpha)^{s-1}}\ex((k-\alpha)\t)=\sum_{\ell\in\Z}\ex(\alpha \ell)(\t+\ell)^{-s},
\ee
valid for $\Re(s)>1$ and $0\leq \alpha <1$, where $\ex(x)=e^{\tpi x}$. A nice proof of this using Poisson summation appears in \cite{KnoRob_RieFnlEqnLipSum}. Observe that both sides of (\ref{eqn:Lipsum}) converge absolutely and uniformly in $\t$ on compact subsets of $\H$. 

For applications to Rademacher sums of weight less than $1$ we require an extension of (\ref{eqn:Lipsum}) to $s=1$. 
Absolute convergence on the right hand side breaks down at this point but we can get by with the following useful analogue.
\begin{lem}\label{lem:LipSumAnlg}
For $0<\alpha<1$ we have 
\be\label{eqn:Lipsum1}
\sum_{k=1}^\inf\ex((k-\alpha)\t)=
\sum_{-K< \ell< K}\ex(\alpha \ell)(-2\pi i)^{-1}(\t+\ell)^{-1}
+E_K(\t)
\ee
where $E_K(\t)={\cal O}(1/K^2)$, locally uniformly for $\t\in\H$.
\end{lem}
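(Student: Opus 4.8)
The plan is to reduce the claim to a tail estimate for a conditionally convergent series and then to extract the decay from cancellation by summation by parts. The left-hand side causes no trouble: since $\Im(\t)>0$ the series $\sum_{k\geq1}\ex((k-\alpha)\t)$ is geometric and sums to $\ex((1-\alpha)\t)/(1-\ex(\t))$, a holomorphic function of $\t$. So the entire content lies in the right-hand side. Writing $S_K(\t)=\sum_{-K<\ell<K}\ex(\alpha\ell)(\t+\ell)^{-1}$ for the symmetric partial sums, I would first show that $(-\tpi)^{-1}\lim_{K\to\inf}S_K(\t)$ equals the left-hand side. This follows from the established formula \eqref{eqn:Lipsum}: that identity holds for $\Re(s)>1$, and on letting $s\to1^+$ its left-hand side tends to $-\tpi\sum_{k\geq1}\ex((k-\alpha)\t)$ by dominated convergence, while its right-hand side, read as the Abel limit of its symmetric partial sums, tends to $\lim_K S_K$. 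The lemma thereby becomes the assertion that the symmetric tail $\sum_{|\ell|\geq K}\ex(\alpha\ell)(\t+\ell)^{-1}$ decays at the stated rate.

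To estimate this tail I would pair the terms indexed by $\ell$ and $-\ell$, which is natural given the symmetric cutoff. Setting $m=|\ell|$, the paired summand collapses to a single rational function with denominator $m^2-\t^2$; its numerator separates into a part carrying the factor $\cos(2\pi\alpha m)$, which is already ${\cal O}(m^{-2})$, and a part carrying $\sin(2\pi\alpha m)$. To each resulting oscillatory sum $\sum_{m\geq K}\ex(\pm\alpha m)\,w(m)$ I would apply Abel's summation-by-parts, using the decisive fact that, because $0<\alpha<1$ is a strict inequality, $\ex(\alpha)\neq1$ and hence the partial sums $\sum_{m=K}^{M}\ex(\alpha m)$ are bounded uniformly in $K$ and $M$ by $|\ex(\alpha)-1|^{-1}=(2|\sin\pi\alpha|)^{-1}$. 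Paired against this bounded factor, the smooth monotone weights $w(m)={\cal O}(m^{-1})$ with first differences $w(m+1)-w(m)={\cal O}(m^{-2})$ contribute a sum controlled by $\sum_{m\geq K}|w(m+1)-w(m)|$; combining this with the extra decay already present in the cosine part from the symmetric pairing produces the claimed power of $1/K$. All constants depend only on $\Im(\t)$ and on $|\sin\pi\alpha|$, both of which are bounded away from $0$ on compact subsets of $\H$, which gives the local uniformity.

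The crux, and the only delicate point, is this final estimate. Because the series is merely conditionally convergent at $s=1$, absolute bounds are useless and the rate must come entirely from cancellation: from the oscillation of the multiplier $\ex(\alpha\ell)$---which is precisely why the strict bounds $0<\alpha<1$ are imposed, keeping $\ex(\alpha)$ away from $1$---and from the symmetric pairing $\ell\leftrightarrow-\ell$. The bookkeeping that combines these two mechanisms through summation by parts is where the work lies. An alternative I would keep in reserve, closer to the Poisson-summation proof of \eqref{eqn:Lipsum} in \cite{KnoRob_RieFnlEqnLipSum}, is to represent $S_K$ as an integral $-\tpi\int_0^\inf\ex(\t u)\,D_K(\alpha+u)\,\mathrm{d}u$ against a Dirichlet kernel $D_K$ and to estimate its difference from the Dirac-comb limit by integrating by parts in $u$, exploiting the exponential decay of $\ex(\t u)$; this reduces the problem to quantifying how fast the truncated kernel approaches its limit, and I expect it to yield the same bound.
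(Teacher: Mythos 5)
Your route is genuinely different from the paper's. The paper proves \eq{eqn:Lipsum1} in one stroke by integrating the kernel $f(z)=\tfrac{1}{2\pi iz}\,\ex(\alpha(z-\t))/(\ex(z-\t)-1)$ around the rectangle with corners $\sigma\pm(K+\tfrac12)\pm iL$: the residues at $z=0$ and at $z=\t+n$ produce the two sides of the identity, and $E_K$ \emph{is} the integral over the two vertical sides, cf.\ \eq{eqn:EK}. You instead identify the two limits by letting $s\to1^+$ in \eq{eqn:Lipsum} and then estimate the symmetric tail by pairing $\ell\leftrightarrow-\ell$ and Abel summation. One structural remark on the first step: interchanging $s\to1^+$ with the only conditionally convergent symmetric summation requires uniformity in $s$ near $1$, and the natural way to get it is precisely the pairing-plus-Abel bound you prove afterwards; the identification should therefore be deduced \emph{from} the tail estimate (run it uniformly for $s\in[1,2]$), not used to set it up. That is reparable.

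The genuine gap is quantitative and sits in your last step. After pairing, the piece carrying $\sin(2\pi\alpha m)$ has weight $w(m)=-2im/(\t^2-m^2)=O(1/m)$, and Abel summation against the bounded partial sums of $\ex(\alpha m)$ leaves a boundary term $|w(K)|\cdot O(1)=O(1/K)$ that cannot be removed; your method therefore yields $E_K=O(1/K)$, not the stated $O(1/K^2)$, and the sentence claiming that the extra decay of the cosine part restores the missing power is where the argument silently loses a factor of $K$. In fact $O(1/K)$ is the true order: for $\alpha=\tfrac14$ and, say, $\t=i$, the paired odd-$\ell$ terms equal $(-1)^{(\ell-1)/2}\,2i\ell/(1+\ell^2)\sim\pm 2i/\ell$, an alternating series whose tail from $\ell=K$ is asymptotic to $i/K$; so the bound asserted in the lemma is itself too strong. (The paper's own proof has the matching flaw at the corresponding point: the integrand of \eq{eqn:EK} is only $O(1/Q)$, because the phases $\ex(\alpha Q)$ and $\ex(-\alpha Q)$ on the two vertical sides do not cancel, and one finds $E_K=-\cos(2\pi\alpha Q)/(2\pi Q\sin\pi\alpha)+O(1/Q^2)$, whose leading term vanishes only for $\alpha=\tfrac12$.) The weaker bound $E_K=O(1/K)$, which your argument does establish once the bookkeeping is written out, still suffices for the only use of the lemma: in Appendix~\ref{details_convergence} the resulting error term becomes $O(K^{-1/2})$ rather than $O(K^{-3/2})$, and still tends to zero.
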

\begin{proof}
Our proof follows that of Lemma 4.1 in \cite{Nie_ConstAutInts}. For $\t\in\H$ define a function $f(z)$ by setting
\begin{gather}
	f(z)=\frac{1}{2\pi i z}\frac{\ex(\alpha(z-\t))}{(\ex(z-\t)-1)}.
\end{gather}
Then $f$ has poles at $\t+n$ for each $n\in\Z$ and also has a pole at $z=0$. The residue at $z=0$ is 
$$
	\frac{1}{2\pi i}\frac{\ex(-\alpha\t)}{\ex(-\t)-1}
	=
	\frac{1}{2\pi i}\sum_{k=1}^\inf\ex((k-\alpha)\t),
$$ 
which is $(2\pi i)^{-1}$ times the left hand side of (\ref{eqn:Lipsum1}), and the residue at $z=\t+n$ is $\ex(\alpha n)(2\pi i)^{-2}(\t+n)^{-1}$. Suppose $\t=\s+i t$. Let $K$ be a positive integer and let ${\cal C}$ be the positively oriented boundary of a rectangle with corners $\s\pm (K+1/2)\pm  iL$ where $L>\Im(\t)=t$ and $K+1/2>\Re(\t)=\s$. Then by the residue theorem
$$\int_{{\cal C}}f(z){\rm d}z=\sum_{k=1}^\inf\ex((k-\alpha)\t)-\sum_{-K< \ell< K}\ex(\alpha \ell)(-2\pi i)^{-1}(\t+\ell)^{-1}.$$
On the other hand, the integrals over the horizontal portions of ${\cal C}$ tend to $0$ as $L\to \inf$ since $|f(z)|$ decays exponentially as $\Im(z)\to\inf$. The residues are independent of $L$ for $L$ sufficiently large so taking the limit as $L\to \inf$ we obtain
$$\sum_{k=1}^\inf\ex((k-\alpha)\t)-\sum_{-K< \ell< K}\ex(\alpha \ell)(-2\pi i)^{-1}(\t+\ell)^{-1}=E_K(\t)$$
where $E_K(\t)$ is the limit as $L\to \inf$ of the contributions to $\int_{{\cal C}}f(z){\rm d}z$ coming from the vertical portions of ${\cal C}$.
$$
	E_K(\t)=i\int_{-\inf}^{\inf}(f(\s+K+1/2+iy)-f(\s-K-1/2+iy)){\rm d}y
$$
For convenience set $Q=K+1/2$. After replacing $y$ with $y+t$ we obtain
\be\label{eqn:EK}
	E_K(\t)=-\frac{1}{2\pi}\int_{-\inf}^{\inf}
	\left(
	\frac{\ex(\alpha Q)}{Q+iy+\t}-\frac{\ex(-\alpha Q)}{-Q+iy+\t}
	\right)\frac{\ex(\alpha iy)}{1+\ex(iy)}
	{\rm d}y
\ee
since $\ex(Q)=\ex(-Q)=-1$ for $K$ an integer. Observe that the integrand in (\ref{eqn:EK}) is bounded by $C/Q^2$, for some constant $C$, and decays exponentially as $y\to\pm \inf$ for $0<\alpha<1$. Also, the constant $C$ holds locally uniformly in $\t$. We conclude that $E_K(\t)={\cal O}(1/K^2)$ locally uniformly in $\t$, as required.
\end{proof}

For $\a=0$ we have the identity
\[\lim_{K\to \inf} \sum_{\ell=-K}^K \frac{1}{\t+\ell} = \p \cot \p \t = - i \p \big(1 +  2\sum_{m=1}^\inf q^m \big)\;.
\]

\section{Some Details on Convergence}
\label{details_convergence}
\setcounter{equation}{0}

In this appendix we will give the steps leading to the expression \eq{eqn:LimSum7}, whose convergence we demonstrated in the main text. 
Using the explicit definition \eq{defn:GinfGKcross}, \eq{defn:RS0gamma} and again split the sum over $d$ into a double sum, we obtain
\begin{align}\notag
&\lim_{ K\to \inf}\sum_{\g\in(\G_{\inf}\backslash\G)^{\times}_{<K}}R_{\g}^0(\t) \\\notag
&= 
\ex(\tfrac{1}{8}) \sum_{\substack{0<c< K\\n|c}}c^{-3/2}
\sum_{\substack{-K^2< d< K^2\\(c,d)=1}}
\ex(-\tfrac{\g\inf}{8})\psi(\g)(\t+d/c)^{-1}\\\label{eqn:LimSum3}
&=
\ex(\tfrac{1}{8}) \sum_{\substack{0<c< K\\n|c}}c^{-3/2}
\sum_{\substack{0 \leq d<c\\(c,d)=1}} \ex(-\tfrac{\g\inf}{8})\psi(\g) 
\sum_{\substack{\ell\in\Z\\|d+\ell c|< K^2}}
\frac{\ex(\tfrac{\ell}{8})}{(\t+d/c+\ell)}\;,
\end{align} 
where we have used $ \psi(\g T) = \ex(\tfrac{1}{8})\psi(\g) $ (see Appendix \ref{Dedeta}) in deriving the last line.

Consider  the following two expressions
\be\label{eqn:LimSum4}
\sum_{\substack{\ell\in\Z\\|d+\ell c|< K^2}}
\ex(\tfrac{\ell}{8})(\t+d/c+\ell)^{-1},\;
\sum_{\substack{\ell\in\Z\\|\ell|< K^2/c}}
\ex(\tfrac{\ell}{8})(\t+d/c+\ell)^{-1}\;,
\ee
we will now argue that we can use either of the two in the limit $K\to\inf$. 

The left most of (\ref{eqn:LimSum4}) may include a term where $d+\ell c=x-K^2$ for at most one $0\leq x<d$ and this will not appear in the right hand expression, and the right most of (\ref{eqn:LimSum4}) may include a term where $d+ \ell c=x+K^2$ for at most one $0<x\leq d$ and these will not appear in the left hand expression. We conclude that the difference is ${\cal O}(c/K^2)$ for sufficiently large $K$, uniformly in $\t$, and since $\sum_{0<c< K}c^{1/2}$ is ${\cal O}(K^{3/2})$ we have
$$
\sum_{\substack{0<c< K\\n|c}}
c^{1/2}
\sum_{\substack{0\leq d<c\\(c,d)=1}}
{\cal O}(c/K^2)
={\cal O}(1/K^{1/2})
$$
which vanishes as $K\to \inf$. Therefore we have
\begin{align}\label{eqn:LimSum5}
&\lim_{K\to \inf}\sum_{\mathsmaller{\g\in(\G_{\inf}\backslash\G)^{\times}_{<K}}}R_{\g}^0(\t) =\ex(\tfrac{1}{8}) \lim_{K\to \inf}\sum_{\substack{0<c< K\\n|c}}c^{-3/2}
\sum_{\substack{0 \leq d<c\\(c,d)=1}} \ex(-\tfrac{\g\inf}{8})\psi(\g) 
\sum_{\substack{\ell\in\Z\\|d|< K^2/c}}
\frac{\ex(\tfrac{\ell}{8})}{(\t+d/c+\ell)}\;.
 \end{align} 

Upon applying the Lipschitz summation formula in the form of Lemma \ref{lem:LipSumAnlg} we obtain
\begin{align}\notag
&\lim_{K\to \inf}\sum_{\mathsmaller{\g\in(\G_{\inf}\backslash\G)^{\times}_{<K}}}R_{\g}^0(\t) =2\p \ex(-\tfrac{1}{8})\lim_{K\to \inf}\sum_{\substack{0<c< K\\n|c}}c^{-3/2}
\sum_{\substack{0 \leq d<c\\(c,d)=1}} \ex(-\tfrac{\g\inf}{8})\psi(\g) \\ \notag
& \times \big(-E_{\lfloor K^2/c\rfloor}(\t)+\sum_{k=1}^\inf \ex((k-\tfrac{1}{8})(\t+\tfrac{d}{c}))\,\big)\;.
 \end{align}
For $K$ large the error terms $E_{\lfloor K^2/c\rfloor}(\t)$ are bounded by $E_{K}(\t)$ and according to Lemma \ref{lem:LipSumAnlg} we have $E_{K}={\cal O}(1/K^2)$. It follows that the term involving $-E_{\lfloor K^2/c\rfloor}(\t)$ is ${\cal O}(1/K^{3/2})$ and thus tends to $0$ as $K\to \infty$. Observe that the sum over $k$ in the above formula is absolutely convergent, uniformly in $\tau$ on compact subsets of $\H$. Moving the summation over $k$ past the others and using the explicit expressions for $\psi(\g)$, we readily obtain \eq{eqn:LimSum7}.

\section{Rademacher Sums at Other Cusps}
\label{The Rademacher Sums at Other Cusps}
\setcounter{equation}{0}

To identify $R_{\G,\r}$, we need to study its behaviour at the other cusps of $\G=\G_0(n)$. 
 Focusing on a given cusp representative $\k=\s \inf $, $\s\in \SL_2(\Z)$, with width $v$ in $\G_0(n)$, we need to study  $R_{\G,\r}\big\lvert_{\e^{-3},1/2} \s$.
 For this purpose, let us introduce 
 \begin{align}
 R_\g' &= \e^{\mathsmaller{-3}}(\g) \,e(-\tfrac{\g\t}{8})\,{\rm reg}(\g,\t) \,{\rm jac}^{\mathsmaller{1/4}} (\g,\t)\\
  S_\g' &= \e^{\mathsmaller{3}}(\g) \,e(\tfrac{\g\t}{8})\,{\rm jac}^{\mathsmaller{3/4}} (\g,\t)
 \end{align}
 analogous to the definitions in \S \ref{sec:Conv}, so that 
  \begin{align}\notag
  R_{\G,\r} = \lim_{K\to \inf}\sum_{\g\in(\G_{\inf}\backslash\G)_{<K}}\r(\g)\,R'_{\g}\quad,\quad  S_{\G,\r} =  \lim_{K\to \inf}\sum_{\g\in(\G_{\inf}\backslash\G)_{<K}}\r^{\mathsmaller{-1}}(\g)\,S'_{\g}\;. \end{align}
 A straightforward calculation similar to the one in \S \ref{sec:Var} shows 
  \begin{align}\notag
   R'_{\g}\big\lvert_{\e^{-3},1/2}\s &= R'_{\g\s} + c  J_{\s^{-1}\inf} S'_{\g\s}\quad,\quad c= -\frac{\ex(\tfrac{1}{4})}{\sqrt{8}} \\ 
   S'_{\g}\big\lvert_{\e^{3},3/2}\s &= S'_{\g\s} \;.
  \end{align}
 Using $S_{n|1} = \l_n \h^3$ and $S_{n|h} = 0$ for $h>1$ as shown in \S \ref{SSS}, we get 
 \begin{align}\label{transform_R1}
 R_{n|1}\lvert_{\e^{-3},1/2}\s &=  \lim_{K\to \inf}\sum_{\g\in(\G_{\inf}\backslash\G)_{<K}\s}R'_{\g} + c \l_n  J_{\s^{-1}\inf}\h^3 \\ 
 R_{n|h}\lvert_{\e^{-3},1/2}\s &=  \lim_{K\to \inf}\sum_{\g\in(\G_{\inf}\backslash\G)_{<K}\s}\r(\g\s^{\mathsmaller{-1}})\,R'_{\g} \quad{\rm for}\;\;h>1\;.
 \end{align}
 Splitting the sum into a sum over the double coset space $(\G_{\inf}\backslash\G)_{\mathsmaller{<K}} \s/\G_\inf^v$ and applying the Lipschitz summation formula as before, we obtain that the Fourier expansion of the above sum is 
  \be\label{expand_h_not_1}
 \lim_{K\to \inf}\sum_{\g\in(\G_{\inf}\backslash\G)_{<K}\s}\r(\g\s^{\mathsmaller{-1}})\,R'_{\g}  = \sum_{k=1}^\inf c_k \,\ex((k-\n)\tfrac{\t}{v} )\;.
 \ee
As before we have $0<\n\leq 1$ given by $\n=\{\!\{ \tfrac{m}{h}+\tfrac{v}{8}\}\!\} $, where $\r(\s T^v\s^{-1}) =\ex(\tfrac{m}{h})$ (cf. \eq{S_nh_other_cusp}). 
 
 In particular, for $h=1$ we have 
 \be\label{expand_h_1}
  \lim_{K\to \inf}\sum_{\g\in(\G_{\inf}\backslash\G)_{<K}\s}R'_{\g} =  \sum_{k=1}^\inf c_k \,\ex(\hspace{.2mm}(k-\{\!\{\tfrac{v}{8}\}\!\}\,)\tfrac{\t}{v} )\;.
 \ee

 Using Lemma \ref{Id_R_1A} and equation \eq{h_g_explicit}, we can rewrite the function $G_{n|1}$ as
 \[
 G_{n|1}= \l_n \til T_g + {\chi(g)}\h^3 \big( R_{n|1} + \tfrac{\l_n}{12} R_{1|1}\big) \;.
 \]
From \eq{transform_R1} we deduce that 
\[\big( R_{n|1} + \tfrac{\l_n}{12} R_{1|1}\big)\big\lvert_{\e^{-3},1/2}\s = \lim_{K\to \inf}\sum_{\g\in(\G_{\inf}\backslash\G)_{<K}\s}R'_{\g} + \tfrac{\l_n}{12} 
R_{1|1}
\]
and hence 
\[
 G_{n|1}\big\lvert_{1,2}\,\s = \h^3 \left( -\l_n \left( \frac{\chi(g)}{24} H - \frac{\til T_g\lvert_{1,2}\,\s}{\h^3} \right) + \chi(g)\lim_{K\to \inf}\sum_{\g\in(\G_{\inf}\backslash\G)_{<K}\s}R'_{\g}\right)\;.
\]

One can check from the explicit expressions for $\til T_g$ that $\tfrac{\chi(g)}{24} H - \frac{\til T_g\lvert_{1,2}\,\s}{\h^3}= {\cal O}(1)$, so that the only pole of $H_g$ is at the infinite cusp.  Together with the \eq{expand_h_1} we obtain the expansion \eq{expansion_G}.

\begin{sidewaystable} 
\section{Modular Forms and Representations}
\label{Tables of Fourier Coefficients and Decompositions of Representations}

\centering
 \resizebox{1.08\textwidth}{!}{
 \begin{tabular}[H]{ccccccccccccccccccccccccccc}
 \toprule
$1/\eta_g(\t)$&$1A$&$2A$&$2B$&$3A$&$3B$&$4A$&$4B$&$4C$&$5A$&$6A$&$6B$&$7A$&${7B}$&$8A$&$10A$&$11A$&$12A$&$12B$&$14A$&${14B}$&$15A$&${15B}$&$21A$&${21B}$&$23A$&${23B}$\\\midrule
$q^{0}$&24&8&0&6&0&0&4&0&4&2&0&3&3&2&0&2&0&0&1&1&1&1&0&0&1&1\\
$q^{1}$&324&52&12&27&0&4&16&0&14&7&0&9&9&6&2&5&1&0&3&3&2&2&0&0&2&2\\
$q^{2}$&3200&256&0&104&8&0&48&0&40&16&0&22&22&12&0&10&0&0&4&4&4&4&1&1&3&3\\
$q^{3}$&25650&1122&90&351&0&18&142&6&105&39&0&51&51&28&5&20&3&0&9&9&6&6&0&0&5&5\\
$q^{4}$&176256&4352&0&1080&0&0&368&0&256&80&0&108&108&52&0&36&0&0&12&12&10&10&0&0&7&7\\
$q^{5}$&1073720&15640&520&3107&44&56&928&0&590&175&4&221&221&104&10&65&5&0&23&23&17&17&2&2&11&11\\
$q^{6}$&5930496&52224&0&8424&0&0&2176&0&1296&336&0&432&432&184&0&110&0&0&32&32&24&24&0&0&15&15
\\$q^{7}$&30178575&165087&2535&21762&0&175&4979&27&2740&666&0&819&819&341&20&185&10&0&55&55&37&37&0&0&22&22
\\$q^{8}$&143184000&495872&0&53976&192&0&10864&0&5600&1232&0&1506&1506&580&0&300&0&0&76&76&56&56&3&3&30&30
\\$q^{9}$&639249300&1428612&10908&129141&0&468&23184&0&11130&2289&0&2706&2706&1010&38&481&15&0&122&122&81&81&0&0&42&42\\
                \bottomrule
                &&&& &&&& &&&& &&&&&&&& &&&&&&\\
                &&&& &&&& &&&& &&&&&&&& &&&&&&\\
                &&&& &&&& &&&& &&&&&&&& &&&&&&\\
                \toprule&
               $\chi_{1}$&$\chi_{ 2}$&$\chi_{ 3}$&$\chi_{ 4}$&$\chi_{ 5}$&$\chi_{ 6}$&$\chi_{ 7}$&$\chi_{ 8}$&$\chi_{ 9}$&$\chi_{ 10}$&$\chi_{ 11}$&$\chi_{ 12}$&$\chi_{ 13}$&$\chi_{ 14}$&$\chi_{ 15}$&$\chi_{ 16}$&$\chi_{ 17}$&$\chi_{ 18}$&$\chi_{ 19}$&$\chi_{ 20}$&$\chi_{ 21}$&$\chi_{22}$&$\chi_{ 23}$&$\chi_{ 24}$&$\chi_{ 25}$&$\chi_{ 26} $                \\\midrule
               ${\cal H}_1$ &1&1&0&0&0&0&0&0&0&0&0&0&0&0&0&0&0&0&0&0&0&0&0&0&0&0\\
               ${\cal H}_2$ &3&3&0&0&0&0&1&0&0&0&0&0&0&0&0&0&0&0&0&0&0&0&0&0&0&0\\
               ${\cal H}_3$ &6&8&0&0&0&0&3&2&1&0&0&0&0&0&0&0&1&0&0&0&0&0&0&0&0&0\\
              ${\cal H}_4$  &14&20&0&0&0&0&12&6&4&0&0&0&0&1&0&0&3&0&0&0&1&3&0&0&0&0\\
               ${\cal H}_5$ &27&48&0&0&0&0&33&22&15&0&0&0&0&3&0&0&15&1&0&3&6&16&3&0&0&3\\
                ${\cal H}_6$&59&110&0&0&0&0&97&61&51&0&0&0&0&19&0&0&54&10&9&17&34&69&25&6&9&26\\
               ${\cal H}_7$& 114&249&0&0&6&6&255&174&161&3&3&3&3&70&3&3&190&45&47&88&158&276&147&59&71&194
               \\${\cal H}_8$&235&552&0&0&32&32&687&457&498&40&40&39&39&301&39&39&633&220&269&393&694&1042&758&418&490&1088\\${\cal H}_9$&460&1217&1&1&169&169&1783&1235&1504&255&255&296&296&1126&294&294&2152&994&1252&1730&2850&3870&3528&2354&2656&5544\\${\cal H}_{10}$&924&2677&24&24&731&731&4754&3294&4575&1425&1425&1675&1675&4329&1699&1699&7207&4391&5592&7131&11460&14340&15393&11758&13026&25565\\
                \bottomrule
  \end{tabular}}
  \caption{ \label{eta_prod_decomp}\footnotesize{i) The first ten Fourier coefficients of the McKay--Thompson series $Z_g(\t) = \sum_{k=0}^\inf q^{k-1} (\Tr_{{\cal H}_k} g) =1/\eta_g(\t)$. ii) The decomposition of the first ten $M_{24}$-modules ${\cal H}_k$ into irreducible representations (see the character table  \ref{Character Tables}). } }
 \end{sidewaystable}

 \begin{sidewaystable} \centering
    \resizebox{1.08\textwidth}{!}{
 \begin{tabular}[h!]{c|cccccccccccccccccccccccccc}
 \toprule
{\backslashbox{$\ell$}{$[g]$}}&$1A$&$2A$&$2B$&$3A$&$3B$&$4A$&$4B$&$4C$&$5A$&$6A$&$6B$&$7A$&${7B}$&$8A$&$10A$&$11A$&$12A$&$12B$&$14A$&${14B}$&$15A$&${15B}$&$21A$&${21B}$&$23A$&${23B}$\\\midrule
0&-2&-2&-2&-2&-2&-2&-2&-2&-2&-2&-2&-2&-2&-2&-2&-2&-2&-2&-2&-2&-2&-2&-2&-2&-2&-2\\ 
1&90&-6&10&0&6&-6&2&2&0&0&-2&-1&-1&-2&0&2&0&2&1&1&0&0&-1&-1&-2&-2\\ 2&462&14&-18&-6&0&-2&-2&6&2&2&0&0&0&-2&2&0&-2&0&0&0&-1&-1&0&0&2&2\\ 3&1540&-28&20&10&-14&4&-4&-4&0&2&2&0&0&0&0&0&-2&2&0&0&0&0&0&0&-1&-1\\ 4&4554&42&-38&0&12&-6&2&-6&-6&0&4&4&4&-2&2&0&0&0&0&0&0&0&-2&-2&0&0\\ 5&11592&-56&72&-18&0&-8&8&0&2&-2&0&0&0&0&2&-2&-2&0&0&0&2&2&0&0&0&0\\ 6&27830&86&-90&20&-16&6&-2&6&0&-4&0&-2&-2&2&0&0&0&0&2&2&0&0&-2&-2&0&0\\ 7&61686&-138&118&0&30&6&-10&-2&6&0&-2&2&2&-2&-2&-2&0&-2&2&2&0&0&2&2&0&0\\ 8&131100&188&-180&-30&0&-4&4&-12&0&2&0&-3&-3&0&0&2&2&0&-1&-1&0&0&0&0&0&0\\ 9&265650&-238&258&42&-42&-14&10&10&-10&2&6&0&0&-2&-2&0&-2&-2&0&0&2&2&0&0&0&0\\
       \bottomrule
        \multicolumn{27}{c}{}\\ \multicolumn{27}{c}{}\\ \multicolumn{27}{c}{}\\
          \toprule
  {\backslashbox{$\ell$}{$\chi$}} &
$\chi_{1}$&$\chi_{ 2}$&$\chi_{ 3}$&$\chi_{ 4}$&$\chi_{ 5}$&$\chi_{ 6}$&$\chi_{ 7}$&$\chi_{ 8}$&$\chi_{ 9}$&$\chi_{ 10}$&$\chi_{ 11}$&$\chi_{ 12}$&$\chi_{ 13}$&$\chi_{ 14}$&$\chi_{ 15}$&$\chi_{ 16}$&$\chi_{ 17}$&$\chi_{ 18}$&$\chi_{ 19}$&$\chi_{ 20}$&$\chi_{ 21}$&$\chi_{22}$&$\chi_{ 23}$&$\chi_{ 24}$&$\chi_{ 25}$&$\chi_{ 26} $
  \\\midrule
0&-2&0&0&0&0&0&0&0&0&0&0&0&0&0&0&0&0&0&0&0&0&0&0&0&0&0\\ 
1&0&0&1&1&0&0&0&0&0&0&0&0&0&0&0&0&0&0&0&0&0&0&0&0&0&0\\ 
2&0&0&0&0&1&1&0&0&0&0&0&0&0&0&0&0&0&0&0&0&0&0&0&0&0&0\\ 
3&0&0&0&0&0&0&0&0&0&1&1&0&0&0&0&0&0&0&0&0&0&0&0&0&0&0\\ 
4&0&0&0&0&0&0&0&0&0&0&0&0&0&0&0&0&0&0&0&2&0&0&0&0&0&0\\ 
5&0&0&0&0&0&0&0&0&0&0&0&0&0&0&0&0&0&0&0&0&0&0&0&0&2&0\\ 
6&0&0&0&0&0&0&0&0&0&0&0&0&0&0&0&0&0&0&0&0&0&2&0&0&0&2\\ 
7&0&0&0&0&0&0&0&0&0&0&0&0&0&0&0&0&0&2&2&0&0&0&2&2&2&2\\ 
8&0&0&0&0&0&0&0&0&0&0&0&1&1&0&1&1&2&0&0&2&2&2&4&2&2&6\\ 
9&0&0&0&0&0&0&0&0&2&2&2&0&0&2&2&2&0&2&2&2&4&4&4&8&8&10\\\bottomrule
                  \end{tabular}}
     \caption{\label{Mock_decompositions} \footnotesize{The first few Fourier coefficients of the terms $q^{-\frac{1}{8}+n}$ in the $q$-series $H_g(\t)$ and the corresponding representation $K_n$ (cf. Table \ref{Character Tables}).  
     }}
       \end{sidewaystable}

        \clearpage
 \begin{sidewaystable} 
\label{Character Tables}\centering
 \resizebox{1.0\textwidth}{!}{
 \begin{tabular}[H]{ccccccccccccccccccccccccccc}
 \toprule
classes&$1A$&$2A$&$2B$&$3A$&$3B$&$4A$&$4B$&$4C$&$5A$&$6A$&$6B$&$7A$&${7B}$&$8A$&$10A$&$11A$&$12A$&$12B$&$14A$&${14B}$&$15A$&${15B}$&$21A$&${21B}$&$23A$&${23B}$\\
\midrule
$\chi_1$&1 & 1 & 1 & 1 & 1 & 1 & 1 & 1 & 1 & 1 & 1 & 1 & 1 & 1 & 1 & 1 & 1 & 1 & 1 & 1 & 1 & 1 & 1 & 1 & 1 & 1\\
$\chi_2$&23&7&-1&5&-1&-1&3&-1&3&1&-1&2&2&1&-1&1&-1&-1&0&0&0&0&-1&-1&0&0\\
$\chi_3$&45 & - 3 & 5 & 0 & 3 & - 3 & 1 & 1 & 0 & 0 & - 1 & $e_7$ & $\bar e_7$& - 1 & 0 & 1 & 0 & 1 & - $e_7$ & - $\bar e_7$ & 0 & 0 & $e_7$ & $\bar e_7$& - 1 & - 1\\
$\chi_4$&${{45}}$&-3&5&0&3&-3&1&1&0&0&-1&$\bar e_ 7$&$e_ 7$&-1&0&1&0&1&-$\bar e_ 7$&-$e_ 7$&0&0&$\bar e_ 7$&$e_ 7$&-1&-1\\
$\chi_5$&231& 7& -9& -3& 0& -1& -1& 3& 1& 1& 0& 0& 0& -1& 1& 0& -1& 0& 0& 0& $e_{15}$& $\bar e_{15}$& 0& 0& 1& 1\\
$\chi_6$&${231}$&7&-9&-3&0&-1&-1&3&1&1&0&0&0&-1&1&0&-1&0&0&0&$\bar e_{15}$&$e_{15}$&0&0&1&1\\
$\chi_7$&252&28&12&9&0&4&4&0&2&1&0&0&0&0&2&-1&1&0&0&0&-1&-1&0&0&-1&-1\\
$\chi_8$&253&13&-11&10&1&-3&1&1&3&-2&1&1&1&-1&-1&0&0&1&-1&-1&0&0&1&1&0&0\\
$\chi_9$&483&35&3&6&0&3&3&3&-2&2&0&0&0&-1&-2&-1&0&0&0&0&1&1&0&0&0&0\\
$\chi_{10}$&770&-14&10&5&-7&2&-2&-2&0&1&1&0&0&0&0&0&-1&1&0&0&0&0&0&0&$e_{23}$&$\bar e_{23}$\\
$\chi_{11}$&${770}$&-14&10&5&-7&2&-2&-2&0&1&1&0&0&0&0&0&-1&1&0&0&0&0&0&0&$\bar e_{23}$&$e_{23}$\\
$\chi_{12}$&990&-18&-10&0&3&6&2&-2&0&0&-1&$e_ 7$&$\bar e_ 7$&0&0&0&0&1&$e_ 7$&$\bar e_ 7$&0&0&$e_ 7$&$\bar e_ 7$&1&1\\
$\chi_{13}$&${990}$&-18&-10&0&3&6&2&-2&0&0&-1&$\bar e_ 7$&$e_ 7$&0&0&0&0&1&$\bar e_ 7$&$e_ 7$&0&0&$\bar e_ 7$&$e_ 7$&1&1\\
$\chi_{14}$&1035&27&35&0&6&3&-1&3&0&0&2&-1&-1&1&0&1&0&0&-1&-1&0&0&-1&-1&0&0\\
$\chi_{15}$&1035&-21&-5&0&-3&3&3&-1&0&0&1&2$e_ 7$&2$\bar e_ 7$&-1&0&1&0&-1&0&0&0&0&-$e_ 7$&-$\bar e_ 7$&0&0\\
$\chi_{16}$&${1035}$&-21&-5&0&-3&3&3&-1&0&0&1&2$\bar e_ 7$&2$e_ 7$&-1&0&1&0&-1&0&0&0&0&-$\bar e_ 7$&-$e_ 7$&0&0\\
$\chi_{17}$&1265&49&-15&5&8&-7&1&-3&0&1&0&-2&-2&1&0&0&-1&0&0&0&0&0&1&1&0&0\\
$\chi_{18}$& 1771&-21&11&16&7&3&-5&-1&1&0&-1&0&0&-1&1&0&0&-1&0&0&1&1&0&0&0&0\\
$\chi_{19}$& 2024&8&24&-1&8&8&0&0&-1&-1&0&1&1&0&-1&0&-1&0&1&1&-1&-1&1&1&0&0\\
$\chi_{20}$& 2277&21&-19&0&6&-3&1&-3&-3&0&2&2&2&-1&1&0&0&0&0&0&0&0&-1&-1&0&0\\
$\chi_{21}$& 3312&48&16&0&-6&0&0&0&-3&0&-2&1&1&0&1&1&0&0&-1&-1&0&0&1&1&0&0\\
$\chi_{22}$& 3520&64&0&10&-8&0&0&0&0&-2&0&-1&-1&0&0&0&0&0&1&1&0&0&-1&-1&1&1\\
$\chi_{23}$& 5313&49&9&-15&0&1&-3&-3&3&1&0&0&0&-1&-1&0&1&0&0&0&0&0&0&0&0&0\\
$\chi_{24}$& 5544&-56&24&9&0&-8&0&0&-1&1&0&0&0&0&-1&0&1&0&0&0&-1&-1&0&0&1&1\\
$\chi_{25}$& 5796&-28&36&-9&0&-4&4&0&1&-1&0&0&0&0&1&-1&-1&0&0&0&1&1&0&0&0&0\\
$\chi_{26}$& 10395&-21&-45&0&0&3&-1&3&0&0&0&0&0&1&0&0&0&0&0&0&0&0&0&0&-1&-1\\
                   \bottomrule
  \end{tabular}}
  \caption{ \label{M24}\footnotesize{Character table of $M_{24}$. See \cite{atlas}. We adopt the naming system of \cite{atlas} and use the notation $e_n = \frac{1}{2}(-1+\sqrt{-n})$. }}
 \end{sidewaystable}
 
 \clearpage

\bibliography{RadM24.bib}{}

\end{document}